\newtheorem{theorem}{Theorem}
\newtheorem{lemma}[theorem]{Lemma}
\newtheorem{proposition}[theorem]{Proposition}
\newtheorem{remark}[theorem]{Remark}
\newtheorem{definition}[theorem]{Definition}
\newtheorem{conjecture}[theorem]{Conjecture}
\newtheorem{theoremletter}{Theorem}
\newtheorem{lemmaletter}{Lemma}
\newenvironment{acknowledgement}{\noindent\textbf{Acknowledgments.}}{}
\newcommand{\innerthmname}{}
\theoremstyle{definition}
\def\namedlabel#1#2{\begingroup
	#2%
	\def\@currentlabel{#2}%
	\phantomsection\label{#1}\endgroup
}
\def\XXint#1#2#3{{\setbox0=\hbox{$#1{#2#3}{\int}$ }
		\vcenter{\hbox{$#2#3$ }}\kern-.6\wd0}}
\newcommand*\owedge{\mathpalette\@owedge\relax}
\newcommand*\@owedge[1]{%
	\mathbin{%
		\ooalign{%
			$#1\m@th\bigcirc$\cr
			\hidewidth$#1\m@th\wedge$\hidewidth\cr
		}%
	}%
}
\newcommand{\mint}{{\int\hspace{-0.34cm}-}}
\newcommand{\ud}{\mathrm{d}}
\newcommand{\loc}{\mathrm{loc}}
\title[Local asymptotics for critical Hartree equations]{Local asymptotics for singular solutions to critical Hartree equations}
\thanks{This work was partially supported by Funda\c c\~ao de Amparo \`a Pesquisa do Estado de S\~ao Paulo (FAPESP), Conselho Nacional de Desenvolvimento Cient\'ifico e Tecnol\'ogico (CNPq), National Science Foundation of China (NSFC), and Natural Science Foundation of Zhejiang Province (ZJNSF).
	J.H.A. was supported by FAPESP \#2020/07566-3, \#2021/15139-0, and \#2023/15567-8 and CNPq \#409764/2023-0, \#443594/2023-6, \#441922/2023-6, and \#306014/2025-4.
	P.P. was supported by FAPESP \#2022/16097-2, and CNPq \#313773/2021-1 and \#441922/2023-6.
	M.Y.  was supported by NSFC \#11971436 and \#12011530199 and ZJNSF \#LZ22A010001 and \#LD19A010001}
\author[J.H. Andrade]{Jo\~{a}o Henrique Andrade}
\author[T. Feng]{Tao Feng}
\author[P. Piccione]{Paolo Piccione}
\author[M. Yang]{Minbo Yang*}
\address[J.H. Andrade]{Institute of Mathematics and Statistics,
	University of S\~ao Paulo
	\newline\indent
	05508-090, S\~ao Paulo-SP, Brazil}
\email{\href{mailto:andradejh@ime.usp.br}{andradejh@ime.usp.br}}
\address[T. Feng]{School of Mathematical Sciences,
	Zhejiang Normal University
	\newline\indent
	321004, Jinhua-ZJ, People’s Republic of China}
\email{\href{mailto:fengtao@zjnu.edu.cn}{fengtao@zjnu.edu.cn}}
\address[P. Piccione]{
	Department of Mathematics,
	School of Sciences, Great Bay University
	\newline\indent
	523000, Dongguan-GD, People’s Republic of China
	\newline\indent
	and
	\newline\indent
	School of Mathematical Sciences,
	Zhejiang Normal University
	\newline\indent
	321004, Jinhua-ZJ, People’s Republic of China
	\newline\indent
	and
	\newline\indent
	(permanent address) Institute of Mathematics and Statistics,
	University of S\~ao Paulo
	\newline\indent
	05508-090, S\~ao Paulo-SP, Brazil}
\email{\href{mailto:paolo.piccione@usp.br}{piccione@ime.usp.br}}
\address[M. Yang]{School of Mathematical Sciences,
	Zhejiang Normal University
	\newline\indent
	321004, Jinhua-ZJ, People’s Republic of China}
\email{\href{mailto:mbyang@zjnu.edu.cn}{mbyang@zjnu.edu.cn}}
\thanks{*Corresponding author.}
\subjclass[2020]{35J60, 35B09, 35J30, 35B40}
\keywords{Critical exponent, Hartree equations, Local asymptotic behavior, Emden--Fowler solutions, Existence, Compactness}
\begin{document}
	
	\begin{abstract}
		We investigate the qualitative properties of a critical Hartree equation defined on punctured domains. 
		Our study has two main objectives: analyzing the asymptotic behavior near isolated singularities and establishing radial symmetry of positive singular solutions.
		First, employing asymptotic analysis, we characterize the local behavior of solutions near the singularity. 
		Specifically, we show that, within a punctured ball, solutions behave like the blow-up limit profile. 
		This is achieved through classification results for entire bubble solutions, a standard blow-up procedure, and a removable singularity theorem, yielding sharp upper and lower bounds near the origin. 
		To run the blow-up analysis, we develop an asymptotic integral version of the moving spheres technique, a technique of independent interest.
		Second, we establish the radial symmetry of blow-up limit solutions using an integral moving spheres method. 
		On the technical level, we apply the integral dual method from Jin, Li, Xiong \cite{MR3694645, arxiv:1901.01678} to provide local asymptotic estimates within the punctured ball and to prove that solutions in the entire punctured space are radially symmetric with respect to the origin. 
		Our results extend seminal theorems of Caffarelli, Gidas, and Spruck \cite{MR982351} to the setting of Hartree equations.
	\end{abstract}
	
	\maketitle
	
	
	\numberwithin{equation}{section}
	\numberwithin{theorem}{section}
	
	\section{Introduction}\label{section1}
	The study of isolated singularities in critical Hartree-type equations is motivated by both physical models and foundational inequalities in analysis. 
	A prototypical example is the nonlocal nonlinear Schr\"odinger equation
	\begin{equation*}
		-\Delta u + u = (\mathcal{R}_2 \ast u^2) u \quad \text{in} \quad \mathbb{R}^3,
	\end{equation*}
	which arises in the quantum theory of the polaron, initially proposed by Fröhlich and Pekar \cite{froelichA,froelichB,pekar}, where \( \mathcal{R}_2=|x|^{-1} \) is the classical Newtonian potential and \(\ast\) the convolution operation. 
	In this model, electrons in an ionic crystal interact with a self-induced polarization field, producing an effective nonlocal interaction represented by the convolution with the Coulomb kernel. 
	A similar structure appears in the mean-field approximation for large systems of non-relativistic bosons, where the Hartree equation describes the evolution of a many-body wave function under long-range attractive interactions. 
	In this context, Choquard introduced the equation
	\[
	-\Delta u + u = (\mathcal{R}_\alpha \ast u^2)u \quad \text{in} \quad \mathbb{R}^3,
	\]
	as a model for a one-component plasma \cite{MR471785}, where \( \mathcal{R}_\alpha=|x|^{3-\alpha} \) is a Riesz potential, formally encompassing both Coulombic and more singular interactions.
	
	The critical regime of the Hartree equation, in which the nonlinearity is scale-invariant with respect to the underlying Sobolev space, shares deep connections with the Hardy–Littlewood–Sobolev (HLS) inequality. 
	The presence of singularities (particularly isolated ones) poses a delicate question of regularity and classification, akin to the study of critical Sobolev embeddings and the Yamabe problem. 
	Understanding whether such singularities are removable or whether they give rise to Delaunay-type periodic structures reveals intricate aspects of the problem's nonlocal geometry and reflects phenomena such as concentration and loss of compactness inherent to critical nonlinearities.
	
	The classical Hardy-Littlewood-Sobolev (HLS) inequality was introduced in \cite{MR1544927} for the one-dimensional case and generalized in \cite{MR165337} for the multi-variable case.
	In a seminal article, Lieb \cite{MR717827} used rearrangement and symmetrization and reduced the study of optimality to a class of radial functions.
	Then, he proved the existence of the extremal function to this inequality with a sharp constant, which is expressed as follows
	\begin{theoremletter}
		Let $n\geqslant 3$ and $\alpha \in(0,n)$.
		If $1<q_1,q_2<\infty$ satisfy $\frac{1}{q_1}+\frac{1}{q_2}+\frac{n-\alpha}{n}=2$, then there exists $H=H(n, q_1, q_2, \alpha)>0$ such that, for all $f_1 \in L^{q_1}(\mathbb{R}^n)$ and $f_2 \in L^{q_2}(\mathbb{R}^n)$, it holds
		\begin{flalign}\label{HLS}\tag{${\rm HLS}$}
			\int_{\mathbb{R}^n} \int_{\mathbb{R}^n} f_1(x)|x-y|^{\alpha-n} f_2(y) \ud x \ud y \leqslant H(n,\alpha,q_1,q_2)\|f_1\|_{L^{q_1}(\mathbb{R}^n)}\|f_2\|_{L^{q_2}(\mathbb{R}^n)}.
		\end{flalign}
		Moreover, when $q_1=q_2=\frac{2n}{n+\alpha}$, the best constant $H(n,\alpha,q_1,q_2)=H_n(\alpha)$ is given by
		\begin{align}\label{hartreeconstant}
			H_n(\alpha)=\pi^{\frac{n-\alpha}{2}}{\Gamma\left(\frac{\alpha}{2}\right) }{\Gamma\left(\frac{n+\alpha}{2}\right)}^{-1}{\Gamma(n)}^{1-\frac{n-\alpha}{n}}{\Gamma\left(\frac{n}{2}\right)}^{\frac{n-\alpha}{n}-1}.
		\end{align}
		Furthermore, the equality in \eqref{HLS} is achieved, if and only if $f_*=f_1=f_2$ is defined $($up to a constant$)$ as
		\begin{equation*}
			f_*(x)=\left(\frac{\mu}{\mu^2+|x-x_0|^2}\right)^{\frac{n+\alpha}{2}}
		\end{equation*}
		for some $\mu>0$ and $x_0 \in \mathbb{R}^n$.
	\end{theoremletter}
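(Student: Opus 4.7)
The plan is to follow Lieb's classical strategy in four stages: (i) prove finiteness of some constant $H(n,\alpha,q_1,q_2)$, (ii) show that an extremizer exists in the diagonal conformal case $q_1 = q_2 = \frac{2n}{n+\alpha}$, (iii) classify extremizers by solving the associated Euler--Lagrange integral equation, and (iv) evaluate the functional on the explicit profile to read off the sharp constant.

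For Stage (i), I would first use the Riesz rearrangement inequality to replace $f_1, f_2$ by their symmetric decreasing rearrangements, which does not decrease the left-hand side of \eqref{HLS} while preserving the $L^{q_i}$ norms. Splitting the kernel $|x-y|^{\alpha - n} = K_R(x-y) + K^R(x-y)$ into its parts below and above a cut-off radius $R$, one obtains a weak-type bound for the Riesz potential operator $T_\alpha f(x) = \int_{\mathbb{R}^n} |x-y|^{\alpha - n} f(y) \ud y$; combined with Marcinkiewicz interpolation and the dual formulation of \eqref{HLS} via Hölder, this yields the finiteness of $H(n,\alpha,q_1,q_2)$.

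For Stages (ii) and (iii), specialize to the conformal exponent $q:= \frac{2n}{n+\alpha}$ and consider the Weinstein-type functional $\mathcal{J}(f) = \|f\|_{L^q(\mathbb{R}^n)}^{-2} \iint f(x)|x-y|^{\alpha - n} f(y) \ud x \ud y$. Rearrangement again restricts a maximizing sequence to symmetric decreasing profiles. The functional is invariant under translations and the scaling $f \mapsto \lambda^{(n+\alpha)/2} f(\lambda \cdot)$, which is the main source of noncompactness. To remove this, I would either fix a gauge (normalize $f(0)$ and $\|f\|_{L^q}$, then apply Helly's theorem for monotone functions) or, more cleanly, transfer the problem to $S^n$ via stereographic projection, turning scalings into conformal motions and reducing noncompactness to a concentration-compactness on a compact manifold. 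The extremizer $u \geq 0$ then satisfies the Euler--Lagrange equation
\begin{equation*}
u(x)^{q-1} = \lambda \int_{\mathbb{R}^n} \frac{u(y)}{|x-y|^{n-\alpha}} \ud y, \quad \lambda > 0,
\end{equation*}
to which I would apply the integral method of moving planes (in the spirit of Chen--Li--Ou) to force radial symmetry about some $x_0 \in \mathbb{R}^n$. After reduction to a one-dimensional problem through radial substitution, the admissible decay and regularity conditions pin down $u$ to the one-parameter family $(\mu^2 + |x - x_0|^2)^{-(n+\alpha)/2}$.

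Finally, for Stage (iv), I would substitute $f_*(x) = (1+|x|^2)^{-(n+\alpha)/2}$ into $\mathcal{J}$: the $L^q$ norm is a Beta integral, and the double integral is computed by passing to $S^n$ via stereographic projection, where $|x - y|^{\alpha - n}$ becomes (up to conformal factors) the intrinsic chordal kernel on $S^n$ whose spherical harmonics expansion is provided by the Funk--Hecke formula. Collecting the Gamma factors gives the explicit constant \eqref{hartreeconstant}. The main obstacle of the whole scheme is Stage (ii): the variational problem is simultaneously invariant under translations, dilations, and inversions, so maximizing sequences may concentrate, escape to infinity, or flatten out. The passage to $S^n$ is what makes this obstacle tractable, since the full conformal group of $\mathbb{R}^n$ becomes the compact-modulo-Möbius group $\mathrm{Conf}(S^n)$, and concentration-compactness on a compact manifold suffices.
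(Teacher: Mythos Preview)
The paper does not prove this statement at all: it is quoted in the introduction as a classical result due to Lieb~\cite{MR717827} (with the general inequality going back to Hardy--Littlewood and Sobolev), and no proof is given or even sketched. So there is no ``paper's own proof'' to compare your proposal against.

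That said, your outline is a reasonable summary of the standard route (rearrangement plus weak-type bound and interpolation for finiteness; existence of extremizers via rearrangement and compactness, possibly after stereographic lift to $\mathbb{S}^n$; classification of extremizers via the Euler--Lagrange equation and moving planes/spheres; evaluation of the constant through Beta/Gamma identities or Funk--Hecke). One historical nuance: Lieb's original existence argument did not use concentration-compactness on $\mathbb{S}^n$ but rather a direct strict-rearrangement argument together with Helly-type compactness for radial decreasing functions, and his identification of the extremizers predates the Chen--Li--Ou integral moving-planes method. Your Stage~(iii) therefore mixes Lieb's approach with later classification techniques; this is fine mathematically but is not literally ``Lieb's classical strategy.'' Also, in Stage~(ii) you should be explicit that the conformal group of $\mathbb{S}^n$ is \emph{not} compact---what the stereographic lift buys you is that the noncompact dilation/translation directions become M\"obius transformations, which can be factored out by fixing the barycenter (or by a strict-rearrangement/competing-symmetries argument), not that the symmetry group itself becomes compact.
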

	
From the Hardy--Littlewood--Sobolev inequality and the Sobolev embedding inequality, we have the following nonlocal Sobolev inequality
	\begin{equation}\label{hartreeinequality}
		\int_{\mathbb{R}^n}|\nabla u|^2 \ud x \gtrsim \left(\int_{\mathbb{R}^n}\left(|x|^{\alpha-n} \ast|u|^{2_\alpha^*}\right)|u|^{2_\alpha^*} \ud x\right)^{{\frac{1}{2_\alpha^*}}} \quad {\rm for} \quad u \in \mathcal{D}^{1,2}(\mathbb{R}^n),
	\end{equation}
	where 
	\begin{equation*}
		2^*_\alpha:=\frac{2n+(\alpha-n)}{n-2}=\frac{n+\alpha}{n-2}
	\end{equation*}
	is said to be critical in the sense of the Hardy--Littlewood--Sobolev inequality.
	In addition, one has \begin{equation*}
		2^*_\alpha-1=\frac{n+2+(\alpha-n)}{n-2}=\frac{2+\alpha}{n-2}.
	\end{equation*}
	It is well-known that the optimal constant in the inequality above is given by
	\begin{equation*}
		K_n(\alpha)=S_n H_n(\alpha)^{\frac{2-n}{n+\alpha}},
	\end{equation*}
	where $H_n(\alpha)>0$ is defined as \eqref{hartreeconstant} and $S_n>0$ is the best Sobolev constant given by
	\begin{equation}\label{sobolevconstant}
		S_n=\inf_{u\in \mathcal{D}^{1,2}(\mathbb R^n)}\frac{\int_{\mathbb R^n}|\nabla u|^2\ud x}{\left(\int_{\mathbb R^n}| u|^{\frac{2n}{n-2}}\ud x\right)^{\frac{n-2}{n}}}\\
		=\left(\frac{4}{n(n-2)\omega_n^{\frac{2}{n}}}\right)^{1/2}.
	\end{equation}
	This infimum is attained by the Talenti--Aubin bubbles \cite{MR0463908, MR0448404} (see also \cite{rodemich}), namely
	\begin{equation}\label{sphericalsolutionstalenti}
		u_{x_0,\mu}(x)=[n(n-2)]^{\frac{n-2}{4}}\left(\frac{1}{1+\mu^2|x-x_0|^{2}}\right)^{\frac{n-2}{2}}.
	\end{equation}
	The Euler--Lagrange equation associated with the sharp version of inequality \eqref{hartreeinequality} is
	\begin{flalign}\tag{$\mathcal P_{n,\alpha}$}\label{ourlimitPDEnonsing}
		-\Delta u=(\mathcal{R}_\alpha\ast F(u))f(u) \quad {\rm in} \quad \mathbb R^n,
	\end{flalign}
	where $\mathcal{R}_\alpha(x):=|x|^{\alpha-n}$ with $n\geqslant 3$ and $\alpha\in(0,n)$.
	The nonlinearity $f(\xi)=|\xi|^{p-2}\xi$ and $F(\xi):=\int_{0}^{\xi}f(\rho)\ud \rho=p|\xi|^{p} $ has critical growth in the sense of the Hardy--Littlewood--Sobolev inequality, namely $p=2^*_\alpha$.
	
	For such a critical Hartree equation, the minimizers were classified according to the results by Miao {\it et al.} \cite{MR3334067}, Du and Yang \cite{MR4027015}, Guo {\it et al.} \cite{MR3817173} and Le \cite{jmaa2020123859} (see also \cite{MR3978520}), which are expressed as the Liouville-type result below.
	\begin{theoremletter}\label{thmA}
		Let $n\geqslant 3$  and $\alpha\in(0,n)$.
		If $u\in \mathcal{C}^\infty(\mathbb R^n)$ is a positive $($non-singular solution$)$ to \eqref{ourlimitPDEnonsing}.
		Then, there exists $x_0\in\mathbb{R}^n$ and $\varepsilon>0$ such that $u$ is radially symmetric about $x_0$ and given by
		\begin{equation}\label{sphericalsolutions}
			u_{x_0,\mu}(x)=C_{n}(\alpha)\left(\frac{1}{1+\mu^{2}|x-x_0|^{2}}\right)^{\frac{n-2}{2}},
		\end{equation}
		where
		\begin{equation}\label{alphaconstant}
			C_n(\alpha)=S_n^{\frac{(n-\alpha)(2-n)}{4(n-\alpha+2)}}K_n(\alpha)^{\frac{2-n}{2(n-\alpha+2)}}[n(n-2)]^{\frac{n-2}{4}}
		\end{equation}
		with $S_n, K_n(\alpha)>0$ are the best constants in Sobolev and the Hardy--Littlewood--Sobolev inequalities given by \eqref{sobolevconstant} and \eqref{hartreeconstant}, respectively.
		These are called the {spherical solutions} $($or bubbles$)$.
	\end{theoremletter}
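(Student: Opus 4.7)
The plan is to recast \eqref{ourlimitPDEnonsing} as a coupled system of integral equations, and then apply the method of moving spheres in integral form to classify positive classical solutions. Introducing the auxiliary function $w := \mathcal{R}_\alpha \ast F(u)$ and inverting the Laplacian via the Newton kernel, a positive classical solution with enough decay at infinity satisfies
\begin{equation*}
u(x) = c_n \int_{\mathbb{R}^n} \frac{w(y)\, f(u(y))}{|x-y|^{n-2}} \, \ud y, \qquad w(x) = \int_{\mathbb{R}^n} \frac{F(u(y))}{|x-y|^{n-\alpha}} \, \ud y.
\end{equation*}
The first task is therefore to upgrade the smoothness hypothesis into integrability and decay: a Kelvin transform of $u$ about an arbitrary base point, together with standard \eqref{HLS} and elliptic estimates, yields $u(x) \lesssim |x|^{-(n-2)}$ and $w(x) \lesssim |x|^{-(n-\alpha)}$, which is enough to validate the representation above and to make sense of the arguments that follow.

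Next, I would apply the method of moving spheres to the pair $(u,w)$. For $x_0 \in \mathbb{R}^n$ and $\lambda>0$, set
\begin{equation*}
u_{x_0,\lambda}(x) = \left(\frac{\lambda}{|x-x_0|}\right)^{\!n-2}\! u\!\left(x_0 + \frac{\lambda^2(x-x_0)}{|x-x_0|^2}\right), \qquad w_{x_0,\lambda}(x) = \left(\frac{\lambda}{|x-x_0|}\right)^{\!n-\alpha}\! w\!\left(x_0 + \frac{\lambda^2(x-x_0)}{|x-x_0|^2}\right).
\end{equation*}
The exponent $2^*_\alpha = \frac{n+\alpha}{n-2}$ is precisely the unique one that makes this system conformally invariant: a direct change of variables shows that $w_{x_0,\lambda} = \mathcal{R}_\alpha \ast F(u_{x_0,\lambda})$ and that the Newton representation is likewise preserved. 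The standard two-step sweeping procedure (find small $\lambda$ for which $u \geqslant u_{x_0,\lambda}$ on $\mathbb{R}^n \setminus B_\lambda(x_0)$, then enlarge $\lambda$ until the inequality first saturates somewhere on the exterior) produces for every base point $x_0$ a critical radius $\lambda_*(x_0) \in (0,+\infty]$ satisfying $u \equiv u_{x_0,\lambda_*(x_0)}$.

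Applying the Li--Zhang calculus lemma on $\mathbb{R}^n$ then forces $u$ to be of the form $u(x) = a\bigl(1 + b|x - x_0|^2\bigr)^{-(n-2)/2}$ for some $a, b > 0$ and $x_0 \in \mathbb{R}^n$. The specific constant $a = C_n(\alpha)$ given in \eqref{alphaconstant} is pinned down by substituting this ansatz into \eqref{ourlimitPDEnonsing} and using the explicit Riesz computation $\mathcal{R}_\alpha \ast (1+|\cdot|^2)^{-(n+\alpha)/2} \propto (1+|\cdot|^2)^{-(n-\alpha)/2}$, leaving only the free parameters $x_0$ and $\mu = \sqrt{b}$ corresponding to translation and dilation invariance, and recovering exactly \eqref{sphericalsolutions}.

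The main obstacle lies in the moving-spheres comparison for the nonlocal system: one must estimate the sign of $w - w_{x_0,\lambda}$ on the exterior $\{|x-x_0| > \lambda\}$ in terms of $u - u_{x_0,\lambda}$ by splitting the Riesz integral through the spherical inversion and tracking the Jacobian. The favorable sign of the resulting cross-terms depends sensitively on the exact match between the Riesz kernel exponent $\alpha - n$, the Newton exponent $2 - n$, and the nonlinearity exponents $p-1$ and $p$, reflecting at the technical level the conformal invariance of the underlying Hardy--Littlewood--Sobolev inequality.
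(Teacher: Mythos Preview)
The paper does not prove Theorem~\ref{thmA}: it is stated as a known Liouville-type classification, attributed to \cite{MR3334067,MR4027015,MR3817173,jmaa2020123859} (see also \cite{MR3978520}), and is invoked only as a black box in the blow-up analysis of Section~\ref{sec:asymptotics}. There is therefore no argument in the paper to compare your proposal against.

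For what it is worth, your outline is precisely the route taken in those references: pass to the coupled integral system $(u,w)$ with $w=\mathcal{R}_\alpha\ast F(u)$, establish the Riesz representation, and run the integral moving-spheres method exploiting the conformal invariance at the critical exponent $2^*_\alpha$, finishing with the calculus lemma of Li--Zhang (your Lemma~\ref{lm:msfundamentallemma} in the paper). Two points deserve more care than you give them. First, the decay step: for a smooth positive solution with no a~priori integrability assumption, one must actually justify $u\in L^{2^*_\alpha}(\mathbb{R}^n)$ (or the pointwise bound $u(x)\lesssim|x|^{2-n}$) before the Newton representation is valid; this is handled explicitly in \cite{jmaa2020123859} and is not a triviality in the Hartree setting. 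Second, your description of the sweeping step is slightly off: the conclusion is a dichotomy---either $\lambda_*(x_0)=\infty$ for every $x_0$ (forcing $u$ constant, hence zero, a contradiction) or $\lambda_*(x_0)<\infty$ for every $x_0$ with $u\equiv u_{x_0,\lambda_*(x_0)}$---and it is this dichotomy, not the bare identity at a single center, that feeds into Lemma~\ref{lm:msfundamentallemma}(i).
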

	Commonly, equations like \eqref{ourlimitPDEnonsing} enjoying some conformal invariance suffer from a lack of compactness of the Sobolev embedding due to concentration phenomena.
	To understand it, we consider the following blow-up limit PDE in the punctured space
	\begin{flalign}\tag{$\mathcal P_{n,\alpha,\infty}$}\label{ourlimitPDE}
		-\Delta u=(\mathcal{R}_\alpha\ast F(u))f(u) \quad {\rm in} \quad \mathbb R^n\setminus\{0\},
	\end{flalign}
where $f(\xi)=|\xi|^{p-2}\xi$, $F(\xi)=p|\xi|^{p}$ with $p=2^*_\alpha$.	By a singular solution $u\in\mathcal{C}^2(B_R^*)$ to \eqref{ourlocalPDE} (or $u\in\mathcal{C}^2(\mathbb{R}^n\setminus\{0\})$ to \eqref{ourlimitPDE}), we mean that $\liminf_{x\rightarrow0}u(x)=\infty$.
	Otherwise, we say that such a solution is non-singular (or regular).
	
	First, we analyze symmetry properties for entire singular solutions to \eqref{ourlimitPDE}.
	\begin{theorem}[Symmetry]\label{thm1}
		Let $n\geqslant 3$  and $\alpha\in(0,n)$.
		If $u\in \mathcal{C}^{2}(\mathbb R^n\setminus\{0\}) \cap L^{2^{*}_{\alpha}}(\mathbb{R}^{n})$ is a singular positive solution to \eqref{ourlimitPDE}.
		Then, $u=u(|x|)$ is radially symmetric about the origin and monotonically decreasing in the radial direction. 
	\end{theorem}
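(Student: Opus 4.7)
The plan is to recast \eqref{ourlimitPDE} as an integral system and then run an integral moving spheres argument on the resulting system, adapting the dual-integral method of Jin, Li, and Xiong to the present Hartree setting.

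First, I would introduce the auxiliary quantity $v(x)=(\mathcal{R}_\alpha\ast u^p)(x)=\int_{\mathbb R^n}|x-y|^{\alpha-n}u(y)^p\,\ud y$, so the PDE reads $-\Delta u=c\,v(x)u(x)^{p-1}$ on $\mathbb R^n\setminus\{0\}$ for some explicit $c>0$ and $p=2^*_\alpha$. The global integrability $u\in L^{2^*_\alpha}(\mathbb R^n)$, combined with the Hardy--Littlewood--Sobolev inequality, ensures that $vu^{p-1}$ is integrable enough for its Newtonian potential to be finite. A B\^ocher-type removable-singularity analysis at the origin, together with the superharmonicity of $u$, then yields the integral representation
\begin{equation*}
	u(x)=c_n\int_{\mathbb R^n}\frac{c\,v(y)u(y)^{p-1}}{|x-y|^{n-2}}\,\ud y+A|x|^{2-n},\qquad v(x)=\int_{\mathbb R^n}\frac{u(y)^p}{|x-y|^{n-\alpha}}\,\ud y,
\end{equation*}
for some $A\geqslant 0$. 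Both expressions are covariant under the Kelvin transform $u_{z,\lambda}(x)=\bigl(\lambda/|x-z|\bigr)^{n-2}u\bigl(z+\lambda^2(x-z)/|x-z|^2\bigr)$ and its analogue for $v$, which is what drives the moving spheres procedure.

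Second, I would fix an arbitrary center $z\in\mathbb R^n\setminus\{0\}$, pass to the integral system for $(u,v)$, and study the critical parameter
\begin{equation*}
	\bar\lambda(z)=\sup\bigl\{\lambda>0:\, u_{z,\mu}\leqslant u\ \text{and}\ v_{z,\mu}\leqslant v\ \text{on}\ \{|x-z|\geqslant\mu\}\text{ for every }0<\mu\leqslant\lambda\bigr\}.
\end{equation*}
A local Taylor-type expansion of $u$ around the regular point $z$ provides the starting step $\bar\lambda(z)>0$. The closing step is the heart of the argument: if $\bar\lambda(z)<\infty$, then one must have $u\equiv u_{z,\bar\lambda(z)}$ and $v\equiv v_{z,\bar\lambda(z)}$ on the exterior of $B_{\bar\lambda(z)}(z)$. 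For this I would invoke the integral strong comparison principle of Jin--Li--Xiong applied to the coupled system: on that exterior, the non-negative differences $u-u_{z,\bar\lambda(z)}$ and $v-v_{z,\bar\lambda(z)}$ satisfy a pair of integral inequalities with strictly positive kernels, so non-strict positivity upgrades to strict positivity unless the differences vanish identically; a continuity argument then shows that otherwise $\bar\lambda(z)$ could be pushed slightly further, contradicting its definition. The alternative $\bar\lambda(z)=\infty$ is ruled out by the singular behavior $u(x)\to\infty$ as $x\to 0$ together with the decay at infinity coming from $u\in L^{2^*_\alpha}(\mathbb R^n)$.

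Third, once $u\equiv u_{z,\bar\lambda(z)}$ on $\mathbb R^n\setminus\{z\}$ for every $z\neq 0$, I would exploit the isolated singularity at the origin: the Kelvin reflection $u_{z,\bar\lambda(z)}$ carries a singularity at the image of $0$ under the inversion, namely at the point $z\bigl(1-\bar\lambda(z)^2/|z|^2\bigr)$. Since the only singular point of $u$ is $0$, matching singular sets forces $\bar\lambda(z)=|z|$, so that $u$ is invariant under the Kelvin inversion centered at $(z,|z|)$, an involution that fixes the origin. A standard calculus lemma in the spirit of Caffarelli--Gidas--Spruck then translates this large family of conformal symmetries into full radial symmetry $u(x)=u(|x|)$ about $0$. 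Monotonic decrease in the radial direction is read off from the strict inequalities $u_{z,\lambda}\leqslant u$ valid for $0<\lambda<|z|$, transplanted along rays from the origin. The principal obstacle I expect is precisely the closing step of the moving spheres, namely the integral strong comparison principle for the coupled $(u,v)$-system: the nonlocality of the Hartree convolution defeats pointwise PDE strong maximum principles, so the argument must be carried out entirely at the integral level, tracking how strict positivity of $v-v_{z,\lambda}$ propagates to $u-u_{z,\lambda}$ and vice versa, and adapting the Jin--Li--Xiong dual-integral approach to the presence of an isolated singularity at $0$ that breaks translation invariance while being consistent with the Kelvin action.
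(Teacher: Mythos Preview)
Your strategy---integral representation followed by an integral moving spheres argument on the coupled $(u,v)$ system, with the singularity at $0$ forcing $\bar\lambda(z)=|z|$---is exactly the paper's approach. Two small points are worth tightening.

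First, the paper shows that the coefficient $A$ in your representation is actually zero: one proves $u$ is a distributional solution across the origin (using $u\in L^{2^*_\alpha}$ to get $(\mathcal{R}_\alpha\ast F(u))f(u)\in L^1_{\rm loc}$), so that $u$ minus its Newtonian potential is globally harmonic and lies in the weighted space $L_0(\mathbb{R}^n)$, whence it vanishes by a Liouville theorem. Without this step the term $A|x|^{2-n}$ is not Kelvin-covariant under inversions centered at $z\neq 0$ (its transform is a multiple of $|y-z(1-\lambda^2/|z|^2)|^{2-n}$, not of $|y|^{2-n}$) and would contaminate the comparison identities driving the moving spheres.

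Second, your dichotomy ``$\bar\lambda(z)<\infty\Rightarrow u\equiv u_{z,\bar\lambda(z)}$'' is stated too strongly: the closing step (pushing $\bar\lambda$ further via the integral strong comparison) only runs while the sphere avoids the singularity, i.e.\ while $\bar\lambda(z)<|z|$. In that regime equality would indeed follow, and your singularity-matching argument then gives a contradiction, forcing $\bar\lambda(z)=|z|$. But at the endpoint $\bar\lambda(z)=|z|$ the procedure does not itself yield $u\equiv u_{z,|z|}$; one has only the family of inequalities $u_{z,\mu}\leqslant u$ for $0<\mu<|z|$. The paper extracts radial symmetry and monotonicity directly from these inequalities via a limiting moving-planes argument (take $z=Re$, $\mu=R-a$, and send $R\to\infty$ to recover reflection inequalities across every hyperplane not through the origin), rather than asserting the Kelvin invariance $u\equiv u_{z,|z|}$.
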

	
	Second, we are concerned with the study of the asymptotic local behavior of solutions near the isolated singularity of the following Hartree-type equation
	\begin{flalign}\tag{$\mathcal P_{n,\alpha,R}$}\label{ourlocalPDE}
		-\Delta u=(\mathcal{R}_\alpha\ast F(u))f(u) \quad {\rm in} \quad {B}^*_R,
	\end{flalign}
	where ${B}_R^*:=\{x\in\mathbb{R}^n : 0<|x|<R\}$ is the punctured ball for $R<\infty$,  $f(\xi)=|\xi|^{p-2}\xi$, $F(\xi)=p|\xi|^{p}$ with $p=2^*_\alpha$.	 
	Notice that allowing $R\rightarrow\infty$ turns \eqref{ourlocalPDE} into \eqref{ourlimitPDE}.
	Our second main result shows that solutions to \eqref{ourlocalPDE} with $R<\infty$ behave asymptotically like the solutions to the limit equation.
	\begin{theorem}[Asymptotics]\label{thm2}
		Let $n\geqslant 3$  and $\alpha\in(0,n)$.
		If $u\in \mathcal{C}^\infty(B_R^*) \cap L^{2^{*}_{\alpha}}(B_{R})$ is a positive singular solution to \eqref{ourlocalPDE} with $R<\infty$.
		Then, it follows
		\begin{equation}\label{asymptotics}
			u(x)=(1+\mathrm{o}(1))u_{\infty}(|x|) \quad {\rm as} \quad x\rightarrow0,
		\end{equation}
		where $u_{\infty}\in \mathcal{C}^\infty(\mathbb R^n\setminus\{0\})$ is blow-up limit solution to \eqref{ourlimitPDE}.
	\end{theorem}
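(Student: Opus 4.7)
The proof follows a blow-up analysis at the isolated singularity, in the spirit of Caffarelli--Gidas--Spruck but adapted to the nonlocal setting through the integral (dual) formulation coming from HLS duality. The first and most delicate step is to establish sharp two-sided bounds
\[
c\,|x|^{-\frac{n-2}{2}}\le u(x)\le C\,|x|^{-\frac{n-2}{2}},\qquad 0<|x|<r_0,
\]
for some $0<c<C$ and some $r_0\in(0,R)$. The upper bound is precisely where the asymptotic integral moving spheres technique announced in the abstract enters: after rewriting the equation in dual integral form, one runs a moving-sphere argument on small scales near the origin, verifies the start-up condition exploiting $\liminf_{x\to 0}u(x)=\infty$, and slides the spheres to extract the decay rate $(n-2)/2$. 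The lower bound combines a Harnack-type inequality for the Hartree equation in small punctured balls with a maximum-principle comparison against the fundamental solution of $-\Delta$.

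Once these bounds are available, I carry out the blow-up. For a sequence $\lambda_j\to 0^+$ I define the rescalings
\[
v_j(x):=\lambda_j^{\frac{n-2}{2}}u(\lambda_j x),\qquad x\in B^{*}_{R/\lambda_j},
\]
which by the conformal invariance of the critical Hartree equation are again positive singular solutions of the same equation on expanding punctured balls. The bounds from Step~1 transfer to $v_j$, yielding uniform two-sided control on every fixed annulus $\{r_1\le|x|\le r_2\}\subset\mathbb{R}^n\setminus\{0\}$. Using HLS to bound the convolution as an equi-bounded coefficient, interior elliptic regularity gives uniform $C^{2,\beta}_{\loc}(\mathbb{R}^n\setminus\{0\})$ estimates. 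Passing to a subsequence, $v_j\to u_\infty$ in $C^{2}_{\loc}(\mathbb{R}^n\setminus\{0\})$, where $u_\infty\in\mathcal{C}^\infty(\mathbb{R}^n\setminus\{0\})\cap L^{2^*_\alpha}(\mathbb{R}^n)$ solves \eqref{ourlimitPDE}; the lower bound forces $u_\infty$ to be singular, ruling out the removable-singularity alternative.

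Applying Theorem~\ref{thm1} to $u_\infty$ then guarantees radial symmetry about the origin and radial monotonicity. To upgrade from subsequential to full-sequence convergence and obtain \eqref{asymptotics}, I use the Emden--Fowler reduction: setting $t=-\log r$ and $w(t)=r^{(n-2)/2}u_\infty(r)$ converts the radial version of \eqref{ourlimitPDE} into a second-order ODE with Hamiltonian structure (the Riesz convolution collapses to a one-dimensional kernel on radial profiles). This ODE admits a one-parameter Delaunay-type family of bounded positive solutions labelled by a conserved Hamiltonian energy. I then invoke a Pohozaev-type identity on annuli contained in $B_R^*$, producing a quantity whose value is independent of the annulus; passing to the blow-up limit, this quantity equals the Hamiltonian energy of $u_\infty$. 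Since it depends only on $u$ and not on the subsequence, the Delaunay profile is the same along every subsequence, upgrading the $C^2$-convergence to the pointwise asymptotic $u(x)/u_\infty(|x|)\to 1$ as $x\to 0$.

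The principal obstacle is the sharp upper bound in Step~1: the local Caffarelli--Gidas--Spruck moving-plane scheme does not apply directly because the convolution couples values of $u$ at all scales, and because $u$ lives only on $B_R^*$. Transporting the moving-sphere reflection through the Riesz kernel, controlling the tail contributions coming from $\mathbb{R}^n\setminus B_R$, and verifying the start-up step near $x=0$ are the technically delicate points; these are precisely what the asymptotic integral moving spheres method developed earlier in the paper is designed to address. A secondary difficulty is the Pohozaev-type rigidity used in Step~4, whose nonlocal version must account for cross-terms produced by the convolution on annular regions.
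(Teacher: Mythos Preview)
Your blow-up skeleton is broadly correct, and the upper-bound step via the integral moving-sphere method matches the paper's Proposition~\ref{prop:upperbound}. However, two points diverge from, and weaken relative to, the paper's argument.

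First, you omit a step the paper carries out explicitly: the \emph{asymptotic radial symmetry of $u$ itself}. In Proposition~\ref{prop:asymptoticsymmetry} the moving-sphere machinery is run a second time on the integral equation \eqref{ourlocalPDEdualR=2} to obtain
\[
u(x)=\bar u(|x|)\bigl(1+\mathcal O(|x|)\bigr)\quad\text{as }x\to 0,
\]
with $\bar u$ the spherical average. This is strictly stronger than knowing, via Theorem~\ref{thm1}, that each blow-up \emph{limit} $u_\infty$ is radial; it is what reduces the asymptotics to the scalar function $r\mapsto\bar u(r)$ and underpins the passage from subsequential to full convergence in \eqref{asymptotics}. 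Your scheme bypasses this and tries to recover uniqueness of the limit only at the level of $u_\infty$.

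Second, and more seriously, your uniqueness mechanism in Step~4 relies on a ``Hamiltonian structure'' of the Emden--Fowler ODE that is not available here. The cylindrical reduction of \eqref{ourlimitPDE} is the \emph{nonlocal} equation \eqref{ourODE},
\[
-U''+\tfrac{(n-2)^2}{4}\,U=(\widehat{\mathcal R}_\alpha\ast F(U))\,f(U),
\]
with the kernel $\widehat{\mathcal R}_\alpha$ of \eqref{singularkerneldual}; there is no local first integral, and the existence of a one-parameter Delaunay family is stated in the paper only as a Conjecture. A Pohozaev-type invariant can at best pin down a necksize parameter; it does not rule out phase drift of the limit along different sequences $\lambda_j\to 0$, and the phase-plane continuity you implicitly use is unavailable for a nonlocal flow. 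Relatedly, the paper's lower-bound step (announced in the introduction to Section~\ref{sec:asymptotics}) proceeds through a removable-singularity dichotomy governed by the sign of the Pohozaev invariant, not through ``Harnack plus comparison with the fundamental solution''; the latter does not by itself yield the sharp rate $|x|^{-(n-2)/2}$.
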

	
	After the symmetry result on Theorem~\ref{thm1}, the next step is to use Emden--Fowler coordinates and transform \eqref{ourlimitPDE} into a (non-local) ODE problem.
	Then, it is natural to wonder whether a refined asymptotics for Theorem~\ref{thm2} as by Korevaar, Mazzeo, Pacard and Schoen \cite[Theorem 1]{MR1666838} holds, as well.
	In this direction, we speculate that a much stronger result holds, namely that periodic solutions to this one-dimensional non-local problem always exist for large periods. 
	Moreover, by carefully analyzing the linearization of Eq. \eqref{ourlimitPDE} around this periodic limit solution, the blow-up rate can be estimated in terms of the indicial roots of the linearized operator, which implies refined information about the local behavior around isolated singularities.
	
	For the sake of reference, let us state this question as follows
	\begin{conjecture}
		Let $n\geqslant 3$  and $\alpha\in(0,n)$.
		If $u\in \mathcal{C}^{2}(\mathbb R^n\setminus\{0\}) \cap L^{2^{*}_{\alpha}}(\mathbb{R}^{n})$ is a singular positive solution to \eqref{ourlocalPDE}.
		Then, there exist $\gamma>0$ and $0<\varepsilon_*\ll1$ sufficiently small such that for any $\varepsilon \in (0,\varepsilon_*)$ and $L\in (0,L_{\varepsilon}]$ satisfying
		\begin{equation}\label{refinedasymptotics}
			u(x)=(1+\mathcal{O}(|x|^{\gamma}))u_{\varepsilon,L_\varepsilon}(|x|) \quad {\rm as} \quad x\rightarrow0,
		\end{equation}
		where $u_{\varepsilon,L_\varepsilon}\in \mathcal{C}^\infty(\mathbb{R}^n\setminus\{0\})$ is the so-called Delaunay solution to \eqref{ourlimitPDE} given by
		\begin{equation*}
			u_{\varepsilon,L\varepsilon}(x)=|x|^{\frac{2-n}{2}}U_{\varepsilon}(-\ln|x|+L_\varepsilon).
		\end{equation*}
		Here $L_\varepsilon\in\mathbb{R}$ is the fundamental period of the bounded solution $U_{\varepsilon}\in\mathcal{C}^\infty(\mathbb R)$ to 
		\begin{equation}\label{ourODE}\tag{$\mathcal{O}_{n,\alpha,\infty}$}
			\begin{cases}
				-U^{\prime\prime}+\frac{(n-2)^2}{4}U=(\widehat{\mathcal{R}}_{\alpha}\ast F(U))f(U) \quad {\rm in} \quad \mathbb R,\\
				U(0)=\varepsilon \quad{\rm and} \quad U^\prime(0)=0,
			\end{cases}
		\end{equation}
		where $\widehat{\mathcal{R}}_{\alpha}\in L^1_{\rm loc}(\mathbb R)$ is the Riesz kernel $($in log-cylindrical coordinates$)$ given by 
		\begin{align}\label{singularkerneldual}
			\widehat{\mathcal{R}}_{\alpha}(t)={2^{\frac{\alpha-n}{2}}} \omega_{n-2} \int_{-1}^{1} {\left(1-\tau^2\right)^{\frac{n-3}{2}}}{\left|\cosh (t)-\tau\right|^{\frac{\alpha-n}{2}}} \ud \tau.
		\end{align}
	\end{conjecture}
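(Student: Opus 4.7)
The plan is to follow the Korevaar--Mazzeo--Pacard--Schoen blueprint of \cite{MR1666838}, adapted to the nonlocal ODE $(\mathcal O_{n,\alpha,\infty})$. The argument splits into three stages: Emden--Fowler reduction, construction and spectral analysis of a Delaunay family, and a matching/linearization step that upgrades the $(1+\mathrm o(1))$ convergence of Theorem~\ref{thm2} to the polynomial rate $|x|^\gamma$.

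First, by Theorem~\ref{thm1} the solution is radial about the origin, so setting $t=-\ln|x|$ and $U(t):=|x|^{(n-2)/2}u(x)$, a direct computation using the conformal invariance of the Hartree nonlinearity carries $(\mathcal P_{n,\alpha,\infty})$ into $(\mathcal O_{n,\alpha,\infty})$ with kernel $\widehat{\mathcal R}_\alpha$ as in \eqref{singularkerneldual}. Since $\alpha<n$, $\widehat{\mathcal R}_\alpha$ decays exponentially and lies in $L^1(\mathbb R)$, so the convolution is a bounded operator on $L^\infty(\mathbb R)$. The problem is thus reduced to classifying bounded positive solutions of $(\mathcal O_{n,\alpha,\infty})$ and quantifying the rate at which the radial lift $U_u$ approaches such a solution as $t\to+\infty$.

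Next, I construct the Delaunay family by bifurcation from the unique positive constant $c_*$ solving the algebraic equation obtained by plugging $U\equiv c_*$ into $(\mathcal O_{n,\alpha,\infty})$. Linearizing at $c_*$ yields a translation-invariant nonlocal operator on $\mathbb R$, which on each space of $L$-periodic even functions becomes a Fourier multiplier whose symbol involves the Fourier coefficients of $\widehat{\mathcal R}_\alpha$. For a critical period $L_*$ this symbol admits a simple zero at the fundamental frequency, so a Crandall--Rabinowitz bifurcation argument produces a one-parameter family $\{(U_\varepsilon,L_\varepsilon)\}_{0<\varepsilon<\varepsilon_*}$ with $U_\varepsilon(0)=\varepsilon$ and $L_\varepsilon\to L_*$ as $\varepsilon\to 0$, which are precisely the profiles appearing in the conjecture. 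In parallel, one must show that every bounded positive solution of $(\mathcal O_{n,\alpha,\infty})$ is either $c_*$ or a translate of some $U_\varepsilon$; this closure of the family is the hardest step and is discussed at the end.

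With the family in hand, Theorem~\ref{thm2} forces $U_u$ to coincide at leading order with $U_{\varepsilon_0}(\cdot+L_{\varepsilon_0})$ for some $(\varepsilon_0,L_{\varepsilon_0})$; writing $U_u=U_{\varepsilon_0}(\cdot+L_{\varepsilon_0})+\phi$, the remainder solves $\mathcal L_{\varepsilon_0}\phi=Q(\phi)$ with $Q$ quadratic in $\phi$, where $\mathcal L_{\varepsilon_0}$ is the linearization of $(\mathcal O_{n,\alpha,\infty})$ at $U_{\varepsilon_0}$. A Floquet-type decomposition of the nonlocal periodic operator $\mathcal L_{\varepsilon_0}$ identifies the zero indicial root produced by the translation mode $U_{\varepsilon_0}'$, which can be killed by adjusting the phase $L_{\varepsilon_0}$, while the next indicial root determines the exponent $\gamma>0$. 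A contraction in the weighted space $\{\phi\colon\sup_{t\ge T_0}e^{\gamma t}|\phi(t)|<\infty\}$ then closes the estimate and yields \eqref{refinedasymptotics} after reverting to the original variables. The principal obstacle is the classification step: in the local setting of \cite{MR1666838} conservation of the ODE Hamiltonian reduces the flow to closed orbits, whereas here the convolution destroys any pointwise Hamiltonian structure, so one must produce either a genuinely nonlocal conserved quantity inherited from the $t$-translation invariance of the underlying $(\mathrm{HLS})$ functional (a nonlocal Pohozaev-type identity), or extract Delaunay profiles along sequences $t_k\to+\infty$ via the asymptotic moving-spheres method developed in the paper, recovering completeness of the family from the uniqueness of the bifurcation branch. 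A closely related technical difficulty is the Fredholm/spectral analysis of $\mathcal L_{\varepsilon_0}$, where standard Sturm--Liouville theory is unavailable and Floquet analysis must be built from scratch using the decay of $\widehat{\mathcal R}_\alpha$ and the conformal identities to locate the indicial roots.
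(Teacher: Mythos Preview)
The statement you are addressing is explicitly labeled a \emph{conjecture} in the paper, and the paper gives no proof of it. The authors introduce it by saying they ``speculate that a much stronger result holds'' and state it ``for the sake of reference'' as an open question. There is therefore no proof in the paper to compare your proposal against.

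What you have written is not a proof but a research program, and you yourself acknowledge this: you flag the classification of bounded positive solutions of $(\mathcal O_{n,\alpha,\infty})$ as ``the hardest step'' with no mechanism supplied, and you note that the Floquet/indicial-root analysis of the nonlocal periodic linearization $\mathcal L_{\varepsilon_0}$ ``must be built from scratch.'' These are precisely the obstacles that keep the statement a conjecture; the paper's own discussion preceding the conjecture points to the same two issues (existence of periodic Delaunay solutions, and spectral analysis of the linearization) as the missing ingredients. Your outline is a faithful adaptation of the Korevaar--Mazzeo--Pacard--Schoen strategy and matches what the authors suggest one should attempt, but the steps you label as difficulties are genuine gaps, not merely technical details: without a nonlocal conserved quantity or some substitute for the ODE phase-plane picture, neither the completeness of the Delaunay family nor the location of the indicial roots is currently available.

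One small correction to your first step: the conjecture, read in the context of Theorem~\ref{thm2}, concerns local solutions on $B_R^*$ (the reference to $\mathbb R^n\setminus\{0\}$ in the hypothesis appears to be a typo in the paper). In that setting Theorem~\ref{thm1} does not apply and $u$ need not be radial; you would have to invoke the asymptotic radial symmetry of Proposition~\ref{prop:asymptoticsymmetry} instead, which only gives radiality up to an $\mathcal O(|x|)$ error and so already contributes to, rather than eliminates, the remainder you must control.
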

	
	Let us compare our main results with the existing literature.
	We consider the fractional PDE below
	\begin{flalign}\tag{$\mathcal I_{n,\sigma,R}$}\label{ourlocalPDEsigma}
		(-\Delta)^{\frac{\sigma}{2}} u=u^{\frac{n+\sigma}{n-\sigma}} \quad {\rm in} \quad {B}^*_R,
	\end{flalign}
	where $R>0$ and $\sigma\in(0,n)$ with $n\geqslant 3$. The integral operator is the so-called  fractional higher order Laplacian and it is defined as $(-\Delta)^{\frac{\sigma}{2}}:=(-\Delta)^{\frac{s}{2}}\circ (-\Delta)^{m}$ with $m:=[\sigma]$ and $\frac{s}{2}:=\sigma-[\sigma]$ with $(-\Delta)^{m}$ being the poly-Laplacian and $(-\Delta)^{\frac{s}{2}}$ being the fractional Laplacian defined as
	\begin{equation*}
		(-\Delta)^{\frac{s}{2}}u(x):={\rm P.V.}\int_{\mathbb R^n}\frac{\kappa_{n,s}[u(x)-u(y)]}{|x-y|^{n+s}}\ud y,
	\end{equation*}
	with
	\begin{equation*}
		\kappa_{n, s}=\pi^{-\frac{n}{2}} 2^{s}{\Gamma\left(\frac{n+s}{2}\right)}{\Gamma\left(1-\frac{s}{2}\right)^{-1}}.
	\end{equation*}   Notice that if $\sigma\in(0,n)$ is an even number, there exists $h\in \mathcal{C}(B_1)$ such that \eqref{ourlocalPDEsigma} satisfies the integral equation
	\begin{flalign}\tag{$\mathcal I^\prime_{n,\sigma,R}$}\label{ourlocalPDEsigmadual}
		u(x)=\int_{B_R}\frac{u(y)^{\frac{n+\sigma}{n-\sigma}}}{|x-y|^{n-\sigma}}\ud y +h(x) \quad {\rm in} \quad {B}^*_R
	\end{flalign}       
	in some local sense. And if $\sigma\in(0,n)$ is an even number, the blow-up limit problem ($R\to\infty$) for Eq. \eqref{ourlocalPDEsigma} has integral form
	\begin{flalign}\tag{$\mathcal I^\prime_{n,\sigma,\infty}$}\label{ourglobalPDEsigmadual}
		u(x)=\int_{\mathbb{R}^{n}}\frac{u(y)^{\frac{n+\sigma}{n-\sigma}}}{|x-y|^{n-\sigma}}\ud y \quad {\rm in} \quad \mathbb{R}^{n} \setminus\{0\} .
	\end{flalign} 
	
	We remark that all non-singular solutions to the blow-up limit problem for Eq. \eqref{ourlocalPDEsigma} (or \eqref{ourlocalPDEsigmadual}) were classified in \cite{MR2200258}, given by deformations of the standard bubble solution.
	This reflects the invariance of this equation with respect to the entire Euclidean group with translations and dilations.
	In \cite{arxiv:1901.01678}, Jin and Xiong use maximization methods to study the existence of Delaunay solutions for equation \eqref{ourglobalPDEsigmadual} in the range $\sigma\in(0,n)$. Thus, by using dual representation, they get the existence of singular solutions for the blow-up limit problem with the even number $\sigma$, which is an extension of \cite{MR3694645}.
	We recall that by an earlier result by Chen, Li and Ou \cite{MR3694645} that entire solutions to \eqref{ourlocalPDEsigma} on the punctured plane are symmetric with respect to the origin.
	
	We emphasize that these results are inspired by the seminal paper \cite{MR982351} (see also \cite{emden,fowler,MR544879,MR1374197}), where the authors study the case $\sigma=2$.
	In this case, the solutions to the blow-up limit equation are all classified.
	It is also worth mentioning that in \cite{MR3869387,MR4793741, arxiv.2001.07984} such an asymptotic classification was extended to the case $\sigma=4$.
	However, for the fractional cases $\frac{\sigma}{2}\in(0,\infty)\setminus \mathbb N$, only existence results for Delaunay-type solutions are known \cite{MR3198648,MR3694655} in the range $\sigma \in (0,2)$.
	
	We briefly describe the difficulties and main novelties in the proof of our main results, which are based on the seminal papers by Jin, Li, and Xiong \cite{MR3694645, arxiv:1901.01678}.           
	First, the proof of the symmetry result in Theorem~\ref{thm1} is based on an asymptotic moving spheres technique on its integral form from \cite{arxiv:1901.01678}. 
	We must adapt this procedure to deal with the Hartree-type nonlinearity on the RHS of \eqref{ourlocalPDE}. 
	To do this, we are based on some ideas from of Chen and Zhou \cite{MR3562307} and Du and Yang \cite{MR4304557}.	
	Second, to obtain the local asymptotic behavior in Theorem~\ref{thm2}, we demonstrate that, under suitable assumptions, any singular positive solution of \eqref{ourlocalPDE} satisfies the integral equation \eqref{ourlocalPDEdual} in some local sense.
	Then, we perform a blow-up analysis technique for singular solutions to \eqref{ourlocalPDEdual}, which is based on the seminal papers of 
	A difference from the nonlinear integral equations studied in \cite{MR2200258,MR2055032,MR1611691} is that our integral equation is locally defined, and we need to establish several delicate error estimates during the blow-up.
	The main novelty is dealing with double-convolution kernels to estimate the difference between a solution and its Kelvin transform.		
	This asymptotic result extends the ones of Ghergu and Taliaferro \cite{MR3396411,MR3487256} on the local behavior description of the positive singular solutions to critical Hartree-type equations.     
	To the best of our knowledge, these are the first applications of (asymptotic) integral sliding techniques for Hartree-type equations with isolated singularities. 
	
	As usual, one can use the inverse of the Riesz kernel of the Laplace operator.
	We define a dual version of \eqref{ourlocalPDE}, namely, let us consider the following nonlocal equation
	\begin{flalign}\tag{$\mathcal P_{n,\alpha,R}^\prime$}\label{ourlocalPDEdual}
		u=\mathcal{R}_2\ast[(\mathcal{R}_\alpha\ast F(u))f(u)]+h \quad {\rm in} \quad {B}^*_R,
	\end{flalign}
	where $0<R<\infty$, $h\in \mathcal{C}^1(B_R)$, and
	\begin{equation*}
		\mathcal{R}_2(x)=\frac{1}{w_{n-1}(n-1)}|x|^{2-n}
	\end{equation*}
	denotes the Riesz potential of the Laplacian with $w_{n-1}>0$ be the $(n-1)$-dimensional surface measure of $\partial\mathbb{S}^n$.		
	Allowing $R\rightarrow\infty$ turns \eqref{ourlocalPDE} into the following PDE on the punctured space
	\begin{flalign}\tag{$\mathcal P_{n,\alpha,\infty}^\prime$}\label{ourlimitPDEdual}
		u=\mathcal{R}_2\ast[(\mathcal{R}_\alpha\ast F(u))f(u)] \quad {\rm in} \quad \mathbb R^n\setminus\{0\}.
	\end{flalign}
	The advantage of working in this integral setting is that one can recover comparison principles, which can be used to perform sliding techniques and to prove weak Liouville-type results.

	We now describe the plan for the rest of the paper.
	In Section~\ref{sec:preliminaries}, we present preliminary tools.
	In Section~\ref{sec:existence}, we give a proof for Theorem~\ref{thm1} by showing that solutions to \eqref{ourlimitPDE} are radially symmetric with respect to the origin.
	In Section~\ref{sec:asymptotics}, we obtain the local asymptotic classification for solutions to \eqref{ourlocalPDE} on the punctured plane, proving Theorem~\ref{thm2}.
	
	\section{Preliminaries}\label{sec:preliminaries}
	In this section, we introduce some preliminary notations and tools that will be used later to prove Theorems~\ref{thm1} and \ref{thm2}.
	
	In what follows, we fix the notation below
	\begin{itemize}
		\item[-] $c,C>0$ are universal constants;
		\item[-] $a_1 \lesssim a_2$ if $a_1 \leqslant C a_2$, $a_1 \gtrsim a_2$ if $a_1 \geqslant c a_2$, and $a_1 \simeq a_2$ if $a_1 \lesssim a_2$ and $a_1 \gtrsim a_2$;
		\item[-] $u=\mathcal{O}(f)$ as $x\rightarrow x_0$ for $x_0\in\mathbb{R}\cup\{\pm\infty\}$, if $\limsup_{x\rightarrow x_0}(u/f)(x)<\infty$ is the Big-O notation;
		\item[-] $u\simeq\widetilde{u}$, if $u=\mathcal{O}(\widetilde{u})$ and $\widetilde{u}=\mathcal{O}(u)$ as $x\rightarrow x_0$ for $x_0\in\mathbb{R}\cup\{\pm\infty\}$;
		\item[-] $u=\mathrm{o}(f)$ as $x\rightarrow x_0$ for $x_0\in\mathbb{R}\cup\{\pm\infty\}$, if $\lim_{x\rightarrow x_0}(u/f)(x)=0$ is the little-o notation;
	\end{itemize}
	
	\subsection{Kelvin transform}
	Later, we will employ the moving spheres technique based on the {$\alpha$-order Kelvin transform}.
	Given $x\in B_R^*$ with $R\in(0,\infty]$ and $\mu>0$, we establish the concept of inversion about a sphere $\partial B_{\mu}(x)$, which is given by $\mathcal{I}_{x,\mu}(x)=x+\mathcal K_{x,\mu}(x)^2(z-x)$, where $\mathcal K_{x,\mu}(x)=\frac{\mu}{|z-x|}$, namely 
	\[
	\mathcal{I}_{x,\mu}(z)=x+\mu^2\frac{z-x}{|z-x|^2}.
	\]
	
	\begin{definition}\label{def:kelvintransform}
		Let $n\geqslant 3$, $\alpha\in(0,n)$, and $R\in(0,\infty]$.
		For any $u\in \mathcal{C}^2(B_R^*)$.
		We define its Kelvin transform about the sphere with center at $x\in\mathbb{R}^n$ and radius $\mu>0$, denoted by $(u)_{x,\mu}\in \mathcal{C}^2(B_R\setminus B_\mu(x))$, as
		\begin{equation*}
			(u)_{x,\mu}(z)=\mathcal K_{x,\mu}(z)^{n-2}u\left(\mathcal{I}_{x,\mu}(z)\right),
		\end{equation*}
		or, more explicitly
		\begin{equation*}
			(u)_{x,\mu}(z)=\left(\frac{\mu}{|z-x|}\right)^{n-2}u\left(x+\mu^2\frac{z-x}{|z-x|^2}\right).
		\end{equation*}
		For the sake of convenience, we often write $(u)_{x,\mu}=u_{x,\mu}$ and $u_{\mu}=u_{0,\mu}$. Also, we denote by \(\Omega^{x,\mu}=\left\{z^{x,\mu}: z\in \Omega\right\}.\)
	\end{definition}
	
	The following proposition states that solutions to \eqref{ourlimitPDE} and its dual counterpart \eqref{ourlimitPDEdual} are invariant under the Kelvin transform.
	\begin{proposition}\label{prop:conformalinvariance}
		Let $n\geqslant 3$, $\alpha\in(0,n)$, and $R\in(0,\infty]$.
		If $u\in \mathcal{C}^\infty(B_R^*)$ is a solution to \eqref{ourlimitPDE} and \eqref{ourlimitPDEdual}, then $u_{x,\mu}\in \mathcal{C}^\infty\left((B_R^*)^{x,\mu}\right)$ is a solution to
		\begin{equation*}
			-\Delta u_{x,\mu}=(\mathcal{R}_\alpha\ast F(u_{x,\mu}))f(u_{x,\mu}) \quad {\rm in} \quad (B_R^*)^{x,\mu}
		\end{equation*}
		and
		\begin{equation*}
			u_{x,\mu}=\mathcal{R}_2\ast[(\mathcal{R}_\alpha\ast F(u_{x,\mu}))f(u_{x,\mu})] \quad {\rm in} \quad (B_R^*)^{x,\mu},
		\end{equation*}
		where $x\in\mathbb R^n$ and $\mu>0$.
	\end{proposition}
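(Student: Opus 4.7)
The plan is to reduce the proposition to two elementary scaling identities---one for the Laplacian and one for the Riesz convolution---and then observe that the critical choice $p=2^*_\alpha=(n+\alpha)/(n-2)$ aligns all conformal weights. By translating, I may assume $x=0$ without loss of generality. The first identity is classical:
\[
-\Delta u_{x,\mu}(z)=\left(\frac{\mu}{|z-x|}\right)^{n+2}(-\Delta u)\bigl(\mathcal{I}_{x,\mu}(z)\bigr).
\]

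The second identity is an intertwining formula for Riesz convolutions. Combining the distance identity $|\mathcal{I}_{x,\mu}(w)-\mathcal{I}_{x,\mu}(z)|=\mu^{2}|w-z|/(|w-x|\,|z-x|)$ with the Jacobian $|\det D\mathcal{I}_{x,\mu}(w)|=(\mu/|w-x|)^{2n}$, the substitution $w=\mathcal{I}_{x,\mu}(y)$ inside $\int_{\mathbb{R}^n}|z-w|^{\beta-n}\widetilde H(w)\,\ud w$ yields, for any $\beta\in(0,n)$,
\[
(\mathcal{R}_\beta\ast\widetilde H)(z)=\left(\frac{\mu}{|z-x|}\right)^{n-\beta}(\mathcal{R}_\beta\ast H)\bigl(\mathcal{I}_{x,\mu}(z)\bigr)
\]
whenever $\widetilde H(w)=(\mu/|w-x|)^{n+\beta}\,H(\mathcal{I}_{x,\mu}(w))$. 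This is the engine of the computation; the normalizing constant in $\mathcal{R}_2$ plays no role.

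I now track the nonlinear factors. Using $(n-2)p=n+\alpha$ and $(n-2)(p-1)=2+\alpha$, one has
\[
F(u_{x,\mu})(w)=\left(\frac{\mu}{|w-x|}\right)^{n+\alpha}F(u)\bigl(\mathcal{I}_{x,\mu}(w)\bigr),\qquad f(u_{x,\mu})(z)=\left(\frac{\mu}{|z-x|}\right)^{2+\alpha}f(u)\bigl(\mathcal{I}_{x,\mu}(z)\bigr).
\]
Applying the intertwining identity with $\beta=\alpha$ to the first factor, $(\mathcal{R}_\alpha\ast F(u_{x,\mu}))(z)$ picks up the weight $(\mu/|z-x|)^{n-\alpha}$, so the entire RHS of \eqref{ourlimitPDE} evaluated at $u_{x,\mu}$ carries weight $(\mu/|z-x|)^{(n-\alpha)+(2+\alpha)}=(\mu/|z-x|)^{n+2}$, matching $-\Delta u_{x,\mu}$. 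This proves the PDE invariance. For the integral version, set $G:=(\mathcal{R}_\alpha\ast F(u))f(u)$, so that $u=\mathcal{R}_2\ast G$; the preceding computation shows $G_{x,\mu}(w)=(\mu/|w-x|)^{n+2}G(\mathcal{I}_{x,\mu}(w))$, and then the intertwining identity with $\beta=2$ produces
\[
\mathcal{R}_2\ast G_{x,\mu}(z)=\left(\frac{\mu}{|z-x|}\right)^{n-2}(\mathcal{R}_2\ast G)\bigl(\mathcal{I}_{x,\mu}(z)\bigr)=u_{x,\mu}(z),
\]
which is the desired dual equation.

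The only genuine obstacle is the bookkeeping of exponents in the change of variables, together with the justification of Fubini and the substitution; convergence of all integrals at infinity is ensured by the $L^{2^*_\alpha}$-type integrability, the Hardy--Littlewood--Sobolev inequality, and the decay of $u$, while convergence near $w=x$ is automatic from the asymptotic behavior of the Kelvin transform. Ultimately the proposition is a structural restatement of the conformal invariance of the critical Hartree equation: the exponent $p=2^*_\alpha$ is precisely the unique choice that forces the $(n-\alpha)$-weight from $\mathcal{R}_\alpha$ to combine with the $(2+\alpha)$-weight from $f$ to produce the $(n+2)$-weight of the conformal Laplacian.
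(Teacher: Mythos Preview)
Your proof is correct and follows the same approach as the paper: a direct computation exploiting the scaling/conformal invariance of the critical Hartree equation. The paper's own proof is a one-sentence remark (``direct computation based on the scaling'') that merely records the dilation invariance $u_\lambda(x)=\lambda^{(n-2)/2}u(\lambda x)$, whereas you actually carry out the bookkeeping---the Laplacian transformation law, the Riesz intertwining identity via the distance formula and Jacobian, and the exponent matching $(n-\alpha)+(2+\alpha)=n+2$---so your version is strictly more detailed than the paper's.
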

	
	\begin{proof}
		It is a direct computation based on the scaling of both \eqref{ourlimitPDE} and its dual \eqref{ourlimitPDEdual}, namely if $u\in \mathcal{C}^\infty(B_R^*)$ is a solution to \eqref{ourlimitPDE}, then the rescaled family $\{u_\lambda\}_{\lambda>0}$ with $u_\lambda(x)=\lambda^{\frac{n-2}{2}} u(\lambda x)$ satisfies 
		\(-\Delta u_\lambda=(\mathcal{R}_\alpha\ast F(u_\lambda))f(u_\lambda)\) in \(B_{R/\lambda }^*\).
	\end{proof}

	\section{Symmetry of Delaunay-type solutions}\label{sec:existence}
	This section is devoted to the proof of Theorem~\ref{thm1}.
	First, we prove that \eqref{ourlimitPDE} and its dual \eqref{ourlimitPDEdual} are equivalent.
	Then, we prove that solutions are radially symmetric with respect to the origin.
	
	\subsection{Integral representation}
	We show that our equation and its dual are correspondents. 
	Inspired by \cite{MR3562307}, we consider the positive solutions $u\in \mathcal{C}^{2}(\mathbb R^n\setminus\{0\}) \cap L^{2^{*}_{\alpha}}(\mathbb{R}^{n})$ to \eqref{ourlimitPDE}. By \cite[Lemma 2.2]{MR3562307}, we easily know $\mathcal{R}_\alpha\ast F(u) \in L^{\infty} (\mathbb{R}^{n} \setminus B_{R} )$ for fixed $0<R<\infty$.
	
	In what follows, we are based in \cite{MR4420104,MR4304557} to consider the space
	\begin{equation}\label{L0space}
		L_{0}(\mathbb R^n):=\left\{u \in L^1_{\rm loc}(\mathbb R^n) : \int_{\mathbb R^n}\frac{|u(x)|}{1+|x|^{n}}\ud x<\infty\right\}.
	\end{equation}
	
	We first introduce the notation of distributional solutions to \eqref{ourlimitPDE}.
	
	\begin{definition}\label{def:distributionalsolutions}
		Let $n\geqslant 3$  and $\alpha\in(0,n)$.
		We say that a solution $u\in \mathcal{C}^0(\mathbb R^n\setminus\{0\})\cap  L^{1}_{\rm loc}(\mathbb{R}^n)$ to \eqref{ourlimitPDE} is a solution in the distributional sense to \eqref{ourlimitPDE}, if the equality below holds
		\begin{flalign}\tag{$\mathcal D_{n,\alpha,\infty}$}\label{distributionalsense}
			-\int_{\mathbb{R}^n}u\Delta\varphi\ud x=\int_{\mathbb{R}^n} (\mathcal{R}_\alpha\ast F(u))f(u)\varphi  \ud x \quad {\rm in} \quad \mathbb{R}^n \quad {\rm for \ all} \quad \varphi\in \mathcal{C}_{c}^{\infty}(\mathbb{R}^n).
		\end{flalign}
	\end{definition}
	
	Our main objective is to prove an integral representation for the global solution to \eqref{ourlimitPDE}.         
	Before, we need to establish some preliminary results.
	
	The first lemma states the integrability conditions for blow-up limit solutions to \eqref{ourlimitPDE}.
	
	\begin{lemma}\label{globaldistributional}
		Let $n\geqslant 3$ and $\alpha\in(0,n)$. If $u\in \mathcal{C}^{2}(\mathbb R^n\setminus\{0\}) \cap L^{2^{*}_{\alpha}}(\mathbb{R}^{n})$ is a positive solution to \eqref{ourlimitPDE}, then $(\mathcal{R}_\alpha\ast F(u))f(u) \in L^{1}_{\rm loc} (\mathbb{R}^{n})$, and $u$ is a distributional solution in $\mathbb{R}^{n}$.
	\end{lemma}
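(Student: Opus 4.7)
The plan is to leverage the classical validity of the PDE on the punctured space together with a cutoff-and-passage-to-the-limit scheme that simultaneously produces the local integrability of the nonlocal term and the distributional identity \eqref{distributionalsense}. Away from the singularity, $u\in\mathcal{C}^2$ solves \eqref{ourlimitPDE} classically, so $(\mathcal{R}_\alpha\ast F(u))f(u)=-\Delta u$ is a smooth, nonnegative function on $\mathbb{R}^n\setminus\{0\}$; what must be controlled is its behavior as $x\to 0$. Since $F(u)=p u^{p}\in L^1(\mathbb{R}^n)$ with $p=2^*_\alpha$, standard weak-type estimates associated with the Riesz potential $\mathcal{R}_\alpha$ show that $\mathcal{R}_\alpha\ast F(u)$ is finite almost everywhere, so the integrand on the right-hand side is pointwise well defined and nonnegative on $\mathbb{R}^n\setminus\{0\}$.

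Fix $\varphi\in\mathcal{C}_c^\infty(\mathbb{R}^n)$ with $\varphi\geqslant 0$, and let $\eta_\epsilon\in\mathcal{C}^\infty(\mathbb{R}^n;[0,1])$ be a radial cutoff satisfying $\eta_\epsilon\equiv 0$ on $B_\epsilon$, $\eta_\epsilon\equiv 1$ on $\mathbb{R}^n\setminus B_{2\epsilon}$, $|\nabla\eta_\epsilon|\lesssim 1/\epsilon$, and $|\Delta\eta_\epsilon|\lesssim 1/\epsilon^2$, with $\nabla\eta_\epsilon,\Delta\eta_\epsilon$ supported in the annulus $B_{2\epsilon}\setminus B_\epsilon$. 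Then $\varphi\eta_\epsilon\in\mathcal{C}_c^\infty(\mathbb{R}^n\setminus\{0\})$ is an admissible test function for the classical equation, and integration by parts yields the finite identity
\begin{equation*}
-\int_{\mathbb{R}^n} u\,\Delta\varphi\,\eta_\epsilon\,\ud x
-2\int_{\mathbb{R}^n} u\,\nabla\varphi\cdot\nabla\eta_\epsilon\,\ud x
-\int_{\mathbb{R}^n} u\,\varphi\,\Delta\eta_\epsilon\,\ud x
=\int_{\mathbb{R}^n}(\mathcal{R}_\alpha\ast F(u))f(u)\,\varphi\,\eta_\epsilon\,\ud x.
\end{equation*}
As $\epsilon\to 0$, the first term on the left converges to $-\int u\,\Delta\varphi\,\ud x$ by dominated convergence (using $u\in L^{2^*_\alpha}\subset L^1_{\rm loc}$), while H\"older's inequality on the cutoff annulus gives
\begin{equation*}
\int_{B_{2\epsilon}} u\,\ud x
\leqslant \|u\|_{L^{2^*_\alpha}(B_{2\epsilon})}\,|B_{2\epsilon}|^{1-1/2^*_\alpha}
\leqslant C\,\epsilon^{n(\alpha+2)/(n+\alpha)}.
\end{equation*}
Hence the two error terms are controlled by $C\,\epsilon^{n(\alpha+2)/(n+\alpha)-1}$ and $C\,\epsilon^{n(\alpha+2)/(n+\alpha)-2}$, respectively, and both exponents are strictly positive because $n(\alpha+2)/(n+\alpha)-2=\alpha(n-2)/(n+\alpha)>0$, so both terms vanish in the limit.

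For the right-hand side, the integrand is nonnegative and $\eta_\epsilon\nearrow 1$ pointwise on $\mathbb{R}^n\setminus\{0\}$, so monotone convergence delivers $\int(\mathcal{R}_\alpha\ast F(u))f(u)\,\varphi\,\ud x=-\int u\,\Delta\varphi\,\ud x$, which is finite. Choosing $\varphi$ to be a nonnegative cutoff with $\varphi\equiv 1$ on $B_R$ for arbitrary $R>0$ gives $(\mathcal{R}_\alpha\ast F(u))f(u)\in L^1(B_R)$, hence in $L^1_{\rm loc}(\mathbb{R}^n)$; the identity \eqref{distributionalsense} then extends from nonnegative $\varphi$ to the full class $\mathcal{C}_c^\infty(\mathbb{R}^n)$ by splitting $\varphi=\varphi_+-\varphi_-$ and dominated convergence. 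I expect the main obstacle to be the $\Delta\eta_\epsilon$ term, whose vanishing is precisely the quantitative content of the identity $n(\alpha+2)/(n+\alpha)-2=\alpha(n-2)/(n+\alpha)>0$; this shows that the integrability assumption $u\in L^{2^*_\alpha}(\mathbb{R}^n)$ is used sharply at the critical scale and that the removability of the inherent cutoff error is dictated by the critical Hartree exponent $p=2^*_\alpha$ itself.
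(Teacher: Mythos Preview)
Your argument is correct and follows essentially the same cutoff-and-H\"older scheme as the paper: both hinge on the same annular error bound $\epsilon^{-2}\int_{B_{2\epsilon}}u\lesssim \epsilon^{\alpha(n-2)/(n+\alpha)}\to 0$, which is exactly the estimate the paper obtains. The only organizational difference is that the paper first establishes $(\mathcal{R}_\alpha\ast F(u))f(u)\in L^1_{\rm loc}$ by testing with a \emph{power} $(\phi\eta_\varepsilon)^q$, $q=\tfrac{2(n+\alpha)}{2+\alpha}$, and then separately proves the distributional identity with $\zeta\eta_\varepsilon$; you instead test once with $\varphi\eta_\epsilon$, use monotone convergence on the nonnegative right-hand side to get both finiteness and the identity for $\varphi\geqslant 0$, and then pass to general $\varphi$. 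One small wording issue: the literal positive and negative parts $\varphi_\pm$ are not smooth, so the extension to arbitrary $\varphi$ should be phrased either as ``repeat the cutoff argument with general $\varphi$ and apply dominated convergence on the right-hand side (now justified by the $L^1_{\rm loc}$ bound)'' or as ``write $\varphi=(\varphi+M\psi)-M\psi$ with $\psi\in\mathcal{C}^\infty_c$, $\psi\geqslant 0$, $\psi\equiv 1$ on $\mathrm{supp}\,\varphi$, $M=\|\varphi\|_\infty$, and use linearity.''
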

	
	\begin{proof}
		For small $\varepsilon>0$, let $\eta_{\varepsilon}\in \mathcal{C}^\infty(\mathbb{R}^{n})$ with $0\leqslant\eta_{\varepsilon}\leqslant1$ satisfying
		\begin{equation}\label{cutoff}
			\eta_{\varepsilon}(x)=
			\begin{cases}
				0, & \mbox{if} \ |x| \leqslant \varepsilon\\
				1, & \mbox{if} \ |x| \geqslant 2 \varepsilon,
			\end{cases}
		\end{equation}
		and $|\nabla ^{j}\eta_{\varepsilon}| \lesssim  C\varepsilon^{-j}$ in $B_{2\varepsilon} \setminus B_{\varepsilon}$ for $j=1, 2$. Let $R>0$ and take $ \phi \in \mathcal{C}^{\infty}_{c}(\mathbb R^n)$ such that $\phi =1$ in $B_{R/2}$ and $\phi =0$ in $B_{R}^{c}$. Let $\phi_{\varepsilon}(x) = \left[\left(\phi\eta_{\varepsilon}(x)\right)\right]^{q}$ with $q=\frac{2n+2\alpha}{2+\alpha}$. Then multiplying both sides of \eqref{ourlimitPDE} by $\phi_{\varepsilon}$ and using integration by parts, we get 
		\begin{align*}
			\int_{\mathbb R^n} (\mathcal{R}_\alpha\ast F(u))f(u) \phi_{\varepsilon}\ud x & =-\int_{\mathbb R^n} u\Delta \phi_{\varepsilon}\ud x \\
			& \leqslant C \int_{B_{R}}u\left(\phi_{\varepsilon}\right)^{\frac{n-2}{n+\alpha}} \ud x+ C\varepsilon^{-2} \int_{\bar{\mathcal{A}}(\varepsilon , 2 \varepsilon)} u\left(\phi_{\varepsilon}\right)^{\frac{n-2}{n+\alpha}} \ud x \\
			& \lesssim C \left(1+\varepsilon^{\frac{(n-2)\alpha}{n+\alpha}}\right) \left(\int_{B_{R}} u^{\frac{n+\alpha}{n-2}} \phi_{\varepsilon}\ud x\right)^\frac{n-2}{n+\alpha} \\
			& <\infty, 
		\end{align*}
		where we recall that $\bar{\mathcal{A}}(\rho_1,\rho_2)=\{x\in \mathbb R^n : \rho_1\leqslant |x|\leqslant \rho_2 \}$ is the closed annulus with $\rho_1,\rho_2\in\mathbb R$ and $\rho_1<\rho_2$. By sending $\varepsilon \to 0$, we obtain 
		\begin{equation*}
			\int_{B_{R/2}} (\mathcal{R}_\alpha\ast F(u))f(u) \ud x < \infty.
		\end{equation*}
		Thus, $(\mathcal{R}_\alpha\ast F(u))f(u) \in L^{1}_{\rm loc}(\mathbb{R}^{n})$. 
		
		Now, we show that $u$ is a distributional solution in $\mathbb{R}^{n}$. We define the test function $\xi_{\varepsilon}:=\zeta \eta_{\varepsilon}$ for \eqref{ourlimitPDE}, where $\eta_{\varepsilon}\in \mathcal{C}^\infty_{c}(\mathbb{R}^{n})$ is defined above, and $\zeta\in \mathcal{C}^\infty_{c}(\mathbb{R}^{n})$ is an arbitrary function. Suppose that ${\rm supp}(\zeta) \subset 	B_{R}$, then
		\begin{equation}\label{proveofdistri}
			\int_{\mathbb R^n} (\mathcal{R}_\alpha\ast F(u))f(u) \xi_{\varepsilon} \ud x  =\int_{B_{R}} u \eta_{\varepsilon} (-\Delta) \zeta\ud x + \int_{B_{R}}u\left(\zeta(-\Delta) \eta_{\varepsilon} -2\nabla \zeta \cdot \nabla \eta_{\varepsilon}\right) \ud x. 
		\end{equation}
		Setting $H_{\varepsilon}(x):= \zeta(-\Delta) \eta_{\varepsilon} -2\nabla \zeta \cdot \nabla \eta_{\varepsilon}$ and using H\"{o}lder inequality, it follows
		\begin{align*}
			\left|\int_{B_{R}}uH_{\varepsilon}(x) \ud x\right| & \leqslant \left(\int_{B_{R} \cap \left(B_{2\varepsilon} \setminus B_{\varepsilon}\right)}\left|H_{\varepsilon}(x)\right|^{\frac{n+\alpha}{2+\alpha}}\ud x\right)^{\frac{2+\alpha}{n+\alpha}}\left(\int_{B_{R}} u^{\frac{n+\alpha}{n-2}}\ud x\right)^{\frac{n-2}{n+\alpha}} \\
			& \leqslant C \varepsilon^{-2}\varepsilon^{\frac{(2+\alpha)n}{n+\alpha}}\left(\int_{B_{R}} u^{\frac{n+\alpha}{n-2}}\ud x\right)^{\frac{n-2}{n+\alpha}} \\
			& \leqslant C \varepsilon^{\frac{(n-2)\alpha}{n+\alpha}} \to 0 \quad {\rm as} \quad \varepsilon \to 0
		\end{align*}
		Finally, by sending $\varepsilon \to 0$ in \eqref{proveofdistri}, we conclude that $u \in L^1_{\rm loc}(\mathbb R^n)$ is a distributional solution.
	\end{proof}
	
	Next, for each $u\in \mathcal{C}^{2}(\mathbb R^n\setminus\{0\}) \cap L^{2^{*}_{\alpha}}(\mathbb{R}^{n})$, we consider the function 
	\begin{equation}\label{m(x)}
		m_u(x):=c_{n}\int_{\mathbb{R}^n}\frac{(\mathcal{R}_\alpha\ast F(u))f(u)}{\left|x-y\right|.^{n-2}} \ud y,
	\end{equation}
	In the lemma below, we give some properties of the auxiliary function $m_u$. In particular, we show that it is well-defined.
	
	\begin{lemma}\label{uandmL0} Let $n\geqslant 3$ and $\alpha\in(0,n)$. If $u\in \mathcal{C}^{2}(\mathbb R^n\setminus\{0\}) \cap L^{2^{*}_{\alpha}}(\mathbb{R}^{n})$ is a positive solution to \eqref{ourlimitPDE}. Then, $m_u(x)$ is well-defined, that is, for every $x \in \mathbb R^n\setminus\{0\}$, it holds
		\begin{equation}\label{well-defined}
			\int_{\mathbb{R}^n}\frac{(\mathcal{R}_\alpha\ast F(u))f(u)}{1+\left|y\right|^{n-2}} \ud y <\infty.
		\end{equation}
		In addition, the estimate below holds
		\begin{equation*}
			\int_{\mathbb{R}^{n}} \frac{u(x)}{1+|x|^{\gamma}} \ud x < \infty \quad {\rm for} \quad  \gamma>2+\frac{(n-2)\alpha}{n+\alpha}.
		\end{equation*}
		Moreover, it holds $m_u\in \mathcal{C}^0(\mathbb R^n\setminus\{0\})\cap L_{0}(\mathbb{R}^{n})$.
	\end{lemma}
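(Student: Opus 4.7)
The plan is to establish the three claims in sequence, using the critical integrability $u \in L^{2^*_\alpha}(\mathbb{R}^n)$ together with H\"older's inequality and Fubini's theorem, and invoking Lemma~\ref{globaldistributional} to handle the contribution near the origin where the convolution may fail to be bounded.

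For the pointwise bound \eqref{well-defined}, I would split $\mathbb{R}^n = B_R \cup (\mathbb{R}^n \setminus B_R)$ for some fixed $R>0$. On $B_R$ the weight $(1+|y|^{n-2})^{-1}$ is bounded, so Lemma~\ref{globaldistributional} applied to $(\mathcal{R}_\alpha\ast F(u))f(u) \in L^1_{\rm loc}(\mathbb{R}^n)$ yields finiteness of this piece. On the exterior region I exploit the information stated just before Lemma~\ref{globaldistributional}, namely $\mathcal{R}_\alpha \ast F(u) \in L^\infty(\mathbb{R}^n \setminus B_R)$, which reduces the matter to controlling $\int_{\mathbb{R}^n \setminus B_R} u^{p-1}/|y|^{n-2} \, \mathrm{d}y$. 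I apply H\"older with conjugate exponents $q_1 = (n+\alpha)/(2+\alpha)$ and $q_2 = (n+\alpha)/(n-2)$: the choice is forced by $(p-1)q_1 = 2^*_\alpha$, so the $u$-factor becomes $\|u\|_{L^{2^*_\alpha}}^{p-1}$, while $(n-2)q_2 = n+\alpha > n$ ensures convergence of the pure-power integral $\int_{\mathbb{R}^n \setminus B_R} |y|^{-(n+\alpha)} \, \mathrm{d}y$ at infinity.

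For the weighted $L^1$ estimate on $u$, I would apply H\"older directly with exponents $2^*_\alpha$ and its conjugate $q = (n+\alpha)/(2+\alpha)$; the $u$-factor is finite by assumption, while the weight integral $\int_{\mathbb{R}^n}(1+|x|^\gamma)^{-q}\,\mathrm{d}x$ converges at infinity if and only if $\gamma q > n$. A direct arithmetic verification
\[
2 + \frac{(n-2)\alpha}{n+\alpha} = \frac{2n + n\alpha}{n+\alpha} = \frac{n}{q}
\]
shows that the required algebraic condition matches the hypothesis $\gamma > 2 + (n-2)\alpha/(n+\alpha)$ exactly, so the estimate closes.

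For the last assertion, continuity of $m_u$ on $\mathbb{R}^n \setminus \{0\}$ would follow from the dominated convergence theorem along sequences $x_k \to x_0 \neq 0$, using the splitting argument from the first step to produce an integrable dominating function for the Newtonian kernel $|x-y|^{2-n}$ uniformly in $k$. To show $m_u \in L_0(\mathbb{R}^n)$, I would invoke Fubini together with the standard Riesz-potential estimate
\[
\int_{\mathbb{R}^n} \frac{\mathrm{d}x}{|x-y|^{n-2}(1+|x|^n)} \lesssim \frac{1}{1+|y|^{n-2}},
\]
which reduces $\int m_u(x)/(1+|x|^n)\,\mathrm{d}x$ directly to \eqref{well-defined}. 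The closest thing to a genuine obstacle is proving this last kernel decay with the correct exponent in $|y|$; this is done by the standard dyadic splitting of the $x$-integration into the regions $|x-y| \leq |y|/2$, $|y|/2 < |x-y| < 2|y|$, and $|x-y| \geq 2|y|$. Once this is in place, the three parts of the lemma follow immediately.
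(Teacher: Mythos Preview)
Your approach is essentially correct and takes a more direct route than the paper. For \eqref{well-defined} and the weighted estimate on $u$, the paper first derives growth bounds $\int_{B_R}(\mathcal{R}_\alpha\ast F(u))f(u)\,\mathrm{d}x \lesssim R^{(n-2)\alpha/(n+\alpha)}$ and $\int_{B_R} u\,\mathrm{d}x \lesssim R^{(2+\alpha)n/(n+\alpha)}$ by testing the equation against $\varphi_R^q$, and then sums dyadically over annuli. Your H\"older arguments bypass this machinery entirely by exploiting the $L^\infty$ bound on $\mathcal{R}_\alpha\ast F(u)$ outside a ball (stated just before the lemma) and the global $L^{2^*_\alpha}$ norm directly; this is cleaner for the specific exponents claimed, though the paper's route yields the stronger statement that the weighted integral of $(\mathcal{R}_\alpha\ast F(u))f(u)$ is finite for every $\gamma_1>(n-2)\alpha/(n+\alpha)$, not just $\gamma_1=n-2$.

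There is one inaccuracy in the third step. Your Riesz-potential bound
\[
\int_{\mathbb{R}^n}\frac{\mathrm{d}x}{|x-y|^{n-2}(1+|x|^n)} \lesssim \frac{1}{1+|y|^{n-2}}
\]
is off by a logarithm. In the middle region $|y|/2<|x-y|<2|y|$ of your three-region splitting, the weight $(1+|x|^n)^{-1}$ is only borderline integrable: $\int_{|x|\leqslant 3|y|}(1+|x|^n)^{-1}\,\mathrm{d}x \sim \ln|y|$, so the correct estimate is $\lesssim \ln(2+|y|)/(1+|y|^{n-2})$. The paper uses exactly this bound (citing \cite[Lemma~2.7]{MR4304557}) and then checks that the resulting integral $\int_{B_1^c}\ln(1+|y|^n)\,|y|^{2-n}(\mathcal{R}_\alpha\ast F(u))f(u)\,\mathrm{d}y$ is still finite. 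Your argument survives this correction: rerunning your H\"older step from part one with the extra $\ln|y|$ in the weight still produces a convergent integral, since $(\ln|y|)^{q_2}|y|^{-(n+\alpha)}$ remains integrable at infinity.
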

	
	\begin{proof}
		Fix a non-negative function $\varphi \in \mathcal{C}^{\infty}_{c}(\mathbb{R}^n)$ such that $\varphi=1 $ on $B_{1}$ and $\varphi =0$ on $B_{2}^{c}$. Let $\varphi_{R}(x)=\varphi(x/R)$. Fix the integer $q>2$  and take $\varphi_{R}^{q}\in\mathcal{C}^\infty(\mathbb{R}^n)$ as the test function in \eqref{ourlimitPDE}. 
		Using the estimates in \cite{MR4420104}, we obtain 
		\begin{align}
			\int_{\mathbb{R}^n}  (\mathcal{R}_\alpha\ast F(u))f(u) \varphi_{R}^{q} \ud x\nonumber = & \int_{\mathbb{R}^n} u (-\Delta) \varphi_{R}^{q} \ud x
			\leqslant  CR^{-2}\int_{B_{2R}} u\varphi_{R}^{q-2} \ud x\\\nonumber
			\leqslant & CR^{-2+\frac{n(2+\alpha)}{n+\alpha}}\left(\int_{B_{2R}}u^{\frac{n+\alpha}{n-2}}\varphi_{R}^{\frac{(q-2)(n+\alpha)}{n-2}} \ud x\right)^{\frac{n-2}{n+\alpha}}\\
			\leqslant & CR^{\frac{(n-2)\alpha}{n+\alpha}}\| u \|_{L^{2^{*}_{\alpha}}(\mathbb{R}^{n})}^{\frac{1}{2^{*}_{\alpha}}}
			\leqslant CR^{\frac{(n-2)\alpha}{n+\alpha}},
		\end{align}
		for some constant $C>0$. From this, it follows 
		\begin{equation}\label{BRestimate}
			\int_{B_{R}} (\mathcal{R}_\alpha\ast F(u))f(u) \ud x \leqslant CR^{\frac{(n-2)\alpha}{n+\alpha}}
		\end{equation}
		and
		\begin{equation}
			\int_{B_{R}} u(x) \ud x \leqslant C R ^{2+\frac{(n-2)\alpha}{n+\alpha}} = C R^{\frac{(2+\alpha)n}{n+\alpha}}
		\end{equation}
		for every $R>0$. Thus, for every $\gamma_{1}>\frac{(n-2)\alpha}{n+\alpha}$, using \eqref{BRestimate} we get
		\begin{align}\label{togetwell-defined}
			\int_{\mathbb{R}^n}\frac{(\mathcal{R}_\alpha\ast F(u))f(u)}{1+\left|x\right|^{\gamma_{1}}} \ud x\nonumber =&\int_{B_{1}}\frac{(\mathcal{R}_\alpha\ast F(u))f(u)}{1+\left|x\right|^{\gamma_{1}}} \ud x + \sum_{\ell=1}^{\infty} \int_{B_{2^{\ell}}\setminus B_{2^{\ell-1}}}\frac{(\mathcal{R}_\alpha\ast F(u))f(u)}{1+\left|x\right|^{\gamma_{1}}} \ud x\\\nonumber
			\leqslant & \int_{B_{1}}(\mathcal{R}_\alpha\ast F(u))f(u) \ud x +\sum_{\ell=1}^{\infty}2^{-\gamma_{1}(\ell-1)}\int_{B_{2^{\ell}}\setminus B_{2^{\ell-1}}}(\mathcal{R}_\alpha\ast F(u))f(u) \ud x\\
			\leqslant & C + C \sum_{\ell=1}^{\infty}2^{\ell\left(\frac{(n-2)\alpha}{n+\alpha}-\gamma_{1}\right)} < \infty. 
		\end{align}
		Therefore, inequality \eqref{well-defined} holds when taking $\gamma_{1} = n-2$ in \eqref{togetwell-defined}. Similarly, for every $\gamma_{2}>\frac{(2+\alpha)n}{n+\alpha}$, it is easy to check 
		\begin{equation*}
			\int_{\mathbb{R}^{n}} \frac{u(x)}{1+|x|^{\gamma_{2}}} \ud x < \infty. 
		\end{equation*}
		Now, we show that $m_u \in L^{1}_{\rm loc}(\mathbb{R}^{n})$. For any $R>0$, we write $m_u$ as $m_u=m_{1,R}+m_{2,R}$, where
		\begin{equation*}
			m_{1,R}(x)=c_{n}\int_{B_{2R}}\frac{(\mathcal{R}_\alpha\ast F(u))f(u)}{|x-y|^{n-2}}\ud y  \quad\text{and}\quad 	m_{2,R}(x)=c_{n}\int_{B_{2R}^{c}}\frac{(\mathcal{R}_\alpha\ast F(u))f(u)}{|x-y|^{n-2}}\ud y .
		\end{equation*}
		Since $(\mathcal{R}_\alpha\ast F(u))f(u) \in L^{1}(B_{2R})$, we have $m_{1,R} \in L^{1}(B_R)$. By inequality \eqref{togetwell-defined}, we easily know $m_{2,R} \in L^{\infty}(B_{R})$. Hence, we obtain $m_u \in L^{1}_{\rm loc}(\mathbb{R}^{n})$. 
		
		By Fubini's theorem, we have
		\begin{equation}
			\int_{\mathbb{R}^n} \frac{m_u(x)}{1+|x|^{n}} \ud x =c_{n} \int_{\mathbb{R}^n} (\mathcal{R}_\alpha\ast F(u))f(u)\left(\int_{\mathbb{R}^n} \frac{1}{|x-y|^{n-2}} \frac{1}{1+|x|^{n}} \ud x\right)\ud y.
		\end{equation}
		By the estimates in \cite[Lemma 2.7]{MR4304557}, we easily arrive at
		\begin{equation*}
			\int_{\mathbb{R}^n} \frac{m_u(x)}{1+|x|^{n}} \ud x \leqslant C \int_{B_{1}} (\mathcal{R}_\alpha\ast F(u))f(u) \ud y +C \int_{B_{1}^{c}} \frac{\ln(1+|y|^{n})(\mathcal{R}_\alpha\ast F(u))f(u)}{|y|^{n-2}} \ud y < \infty. 
		\end{equation*}
		Thus, we obtain $m_u\in L_{0}(\mathbb{R}^{n})$.
	\end{proof}
	
	Now, we can show the integral representation for global singular solutions of \eqref{ourlimitPDE}.
	
	\begin{proposition}\label{prop:radialsymmetry}
		Let $n\geqslant 3$ and $\alpha\in(0,n)$. If $u\in \mathcal{C}^{2}(\mathbb R^n\setminus\{0\}) \cap L^{2^{*}_{\alpha}}(\mathbb{R}^{n})$ is a positive solution to \eqref{ourlimitPDE}, then u satisfies
		\begin{equation}
			u(x)= c_{n} \int_{\mathbb{R}^n}\frac{(\mathcal{R}_\alpha\ast F(u))f(u)}{|x-y|^{n-2}} \ud y  
		\end{equation}
		for $x \in \mathbb{R}^{n}$.
	\end{proposition}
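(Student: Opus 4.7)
The plan is to compare the solution $u$ with the Newton-potential function $m_u$ from \eqref{m(x)} and to show that their difference is an entire harmonic function that must vanish. Set $w := u - m_u$. The first step is to establish that $w$ is harmonic on all of $\mathbb{R}^n$. By Lemma~\ref{globaldistributional}, $u$ solves $-\Delta u = (\mathcal{R}_\alpha \ast F(u)) f(u)$ in $\mathcal{D}'(\mathbb{R}^n)$, so the isolated singularity at the origin is distributionally removable. Because the source $G := (\mathcal{R}_\alpha \ast F(u)) f(u)$ is locally integrable (again by Lemma~\ref{globaldistributional}), classical potential theory gives $-\Delta m_u = G$ in $\mathcal{D}'(\mathbb{R}^n)$ as well. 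Subtracting, $-\Delta w = 0$ distributionally on $\mathbb{R}^n$, and Weyl's lemma promotes $w$ to a smooth classical harmonic function.

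The second step is a Liouville-type argument to force $w \equiv 0$. Combining Lemma~\ref{uandmL0} with a Fubini plus dyadic-shell decomposition of the Newton-kernel convolution that invokes \eqref{BRestimate}, I would derive
\[
\int_{B_R} u\,\ud x + \int_{B_R} m_u\,\ud x \leqslant C R^{\frac{n(2+\alpha)}{n+\alpha}}.
\]
Since $n(2+\alpha)/(n+\alpha) < n$ for $n \geqslant 3$ and $\alpha \in (0,n)$, the harmonic function $w$ has strictly sub-Euclidean $L^1$-averaged growth. Interior Cauchy-type derivative estimates for harmonic functions then give $|\nabla^k w(0)| \lesssim R^{-k - n(n-2)/(n+\alpha)} \to 0$ as $R \to \infty$ for every $k \geqslant 1$, so $w$ is constant. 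The value of that constant is pinned down by the mean-value identity
\[
w(0) = \frac{1}{|B_R|} \int_{B_R} w\,\ud x = \mathcal{O}\bigl(R^{-n(n-2)/(n+\alpha)}\bigr) \to 0 \quad {\rm as} \quad R \to \infty,
\]
and so $w \equiv 0$, yielding the claimed integral representation.

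The main technical obstacle I foresee is producing the $L^1$-averaged bound on $m_u$ with a matching exponent: since $m_u$ is a nonlocal convolution of a source that need not decay pointwise, one must split the Newton-kernel integral into a near-field region $\{|y| \leqslant 2R\}$, which contributes $R^2 \cdot R^{(n-2)\alpha/(n+\alpha)}$ via \eqref{BRestimate} and the bound $\int_{B_R}|x-y|^{2-n}\,\ud x \lesssim R^2$, and a far-field region $\{|y| > 2R\}$, which is summed geometrically over dyadic shells using again \eqref{BRestimate} and the pointwise bound $|x-y|^{2-n} \lesssim |y|^{2-n}$ for $x \in B_R$. One must then verify that both contributions collapse to the same exponent $n(2+\alpha)/(n+\alpha)$ already known for $u$; the relevant geometric series converges precisely because the exponent $-n(n-2)/(n+\alpha)$ is strictly negative, which is what encodes the sub-Euclidean growth needed in the Liouville step.
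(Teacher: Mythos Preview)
Your proposal is correct and follows the same overall strategy as the paper: set $w = u - m_u$, use Lemma~\ref{globaldistributional} to show $w$ is distributionally (hence classically) harmonic on $\mathbb{R}^n$, and conclude $w \equiv 0$ by a Liouville argument based on sub-Euclidean growth.

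The only difference lies in how the Liouville step is executed. The paper packages the growth information into the weighted space $L_0(\mathbb{R}^n)$ of \eqref{L0space}: Lemma~\ref{uandmL0} shows $u, m_u \in L_0(\mathbb{R}^n)$ (via the same dyadic-shell decomposition you sketch), hence $w \in L_0(\mathbb{R}^n)$, and then invokes the Liouville theorem from \cite{MR4420104} for harmonic functions in $L_0$. You instead derive the explicit $L^1$-on-balls bound $\int_{B_R}(u + m_u)\,\ud x \lesssim R^{n(2+\alpha)/(n+\alpha)}$ and run the mean-value/Cauchy-estimate argument by hand. Your near-field/far-field splitting of $\int_{B_R} m_u$ is exactly right, and the exponents collapse as you predicted; note that the same dyadic computation already appears in the proof of Lemma~\ref{uandmL0} (inequality \eqref{togetwell-defined} with $\gamma_1 = n-2$), so your ``main technical obstacle'' is in fact already handled there. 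What your route buys is self-containment---no external Liouville citation needed---at the cost of a few more lines; the paper's route is terser but relies on \cite{MR4420104}.
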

	
	\begin{proof}
		Let $m_u\in L^1_{\rm loc}(\mathbb{R}^{n})$ be defined as in \eqref{m(x)}. 
		Then, $m_u \in L_{0}(\mathbb{R}^{n})$ and it satisfies
		\begin{equation}
			\int_{\mathbb{R}^n}m_u(-\Delta)\varphi \ud x =\int_{\mathbb R^n} (\mathcal{R}_\alpha\ast F(u))f(u) \varphi \ud x \quad {\rm for / any} \quad \varphi \in \mathcal{C}^{\infty}_{c}(\mathbb{R}^{n}).
		\end{equation}
		Let $w=u-m_u$, by Lemma~\ref{globaldistributional}, then $-\Delta w =0$ in $\mathbb{R}^{n}$ in the distributional sense. By the regularity of harmonic functions, $w \in \mathcal{C}^{\infty}(\mathbb{R}^{n})$ and $-\Delta w \equiv 0$ in the classical sense. On the other hand, from Proposition~\ref{uandmL0}, we know that both $u,m_u\in L_{0}(\mathbb{R}^{n})$, and hence $w \in L_{0}(\mathbb{R}^{n})$. Then, by the Liouville type theorem from \cite{MR4420104}, we conclude that $w \equiv 0$ in $\mathbb{R}^{n}$. 
	\end{proof}
	
	\subsection{Radial symmetry}\label{subsec:radialsymmetry}
	This section proves that solutions to the blow-up limit equation \eqref{ourlimitPDE} or \eqref{ourlimitPDEdual} are radially symmetric and monotonically decreasing.
	We extend the asymptotic moving sphere technique given by \cite{arxiv:1901.01678} to deal with double singular kernels.
	
	\begin{remark}
		Another possible strategy is to adapt the asymptotic moving plane method developed in Caffarelli, Gidas and Spruck \cite[Theorem 1.3]{MR982351} and extended by Chen and Li \cite{MR1338474,MR1374197}.
		For the case of integral equations, we refer to some techniques from Chen, Li, and Ou \cite[Theorem 2.2]{MR2200258}, and \cite[Theorem 4]{MR2122171}, which deal with the case of singular and non-singular solutions, respectively.
		The main difference between these two cases is that in the latter, one needs to employ the moving planes technique together with the Kelvin transform.
		For the case of Hartree-type equations like \eqref{ourlimitPDEdual}, we cite, the computations by Le \cite{MR4038263} and Ma, Shang and Zhang \cite{MR4053198}.
	\end{remark}
	
	Also, a formulation by an associated integro-differential system in the punctured space is given below.
	We observe that Eq. \eqref{ourlimitPDEdual} can also be seen as integrodifferential systems, namely
	\begin{flalign}\tag{$\mathcal S_{n,\alpha,\infty}^\prime$}\label{ourlimitsystemdual}
		\begin{cases}
			\begin{split}
				u&  = \mathcal{R}_2\ast\left[vf(u)\right]\\
				v&  = \mathcal{R}_\alpha\ast F(u)
			\end{split} \quad {\rm in} \quad \mathbb{R}^n\setminus\{0\}.
		\end{cases}
	\end{flalign}
	
	Our first main result in this subsection is based on \cite[Theorem 2.2]{MR2200258} and states symmetric properties of positive non-singular solutions to \eqref{ourlimitPDEdual}.
	We use the integral moving spheres technique to prove that singular solutions to \eqref{ourlimitPDE} or \eqref{ourlimitPDEdual} are radially symmetric.
	The main ingredient in the proof is the integral version of the moving spheres technique contained in \cite{MR3694645,MR2055032,arxiv:1901.01678,MR4304557}.
	
	Let $u_{x, \mu}$ be as in Definition~\ref{def:kelvintransform}, and define $v_{x, \mu}$ by
	\begin{align*}
		v_{x,\mu}(y)=\left(\frac{\mu}{|y-x|}\right)^{n-\alpha}v\left(x+\mu^2\frac{y-x}{|y-x|^2}\right)
		.
	\end{align*}
	By a direct computation, it is easy to verify that $u_{x, \mu}$ and $v_{x, \mu}$ satisfy 
	\begin{flalign}
		\begin{cases}
			\begin{split}
				u_{x, \mu}(y)&  = \int_{\mathbb R^n}\frac{1}{|z-y|^{n-2}}\left[v_{x, \mu}(z)f(u_{x, \mu}(z))\right]\ud z\\
				v_{x, \mu}(y)&  = \int_{\mathbb R^n}\frac{1}{|z-y|^{n-\alpha}} F(u_{x, \mu}(z))\ud z
			\end{split} \quad {\rm in} \quad \mathbb{R}^n\setminus\{0\}.
		\end{cases}
	\end{flalign}
	Thus, for any  $x \in \mathbb R^n$ and $\mu>0$, for any $|y-x|>\mu$ and $y\in \mathbb R^n\setminus\{0\}$, we get
	\begin{equation}\label{u-uxmu}
		u(y)-u_{x,\mu}(y)=\int_{|z-x|\geqslant\mu}\mathcal{K}_{2}(x,\mu;z,y)\left[f(u(z))v(z)-f(u_{x,\mu}(z))v_{x,\mu}(z)\right]\ud z
	\end{equation}
	and 
	\begin{equation}\label{v-vxmu}
		v(y)-v_{x,\mu}(y)=\int_{|z-x|\geqslant\mu}\mathcal{K}_{\alpha}(x,\mu;z,y)\left[F(u(z))-F(u_{x,\mu}(z))\right]\ud z,
	\end{equation}
	where 
	\begin{equation}\label{kernelkelvintransform1}
		\mathcal{K}_2(x,\mu;z,y)=\frac{1}{|y-z|^{n-2}}-\left(\frac{\mu}{|y-x|}\right)^{n-2}\frac{1}{\left|y^{x,\mu}-z\right|^{n-2}} 
	\end{equation}
	and
	\begin{equation}\label{kernelkelvintransform2}
		\mathcal{K}_\alpha(x,\mu;z,y)=\frac{1}{|y-z|^{n-\alpha}}-\left(\frac{\mu}{|y-x|}\right)^{n-\alpha}\frac{1}{\left|y^{x,\mu}-z\right|^{n-\alpha}} .
	\end{equation}
	It is elementary to check the positivity of $\mathcal{K}_2(x,\mu;z,y)$ and $\mathcal{K}_\alpha(x,\mu;z,y)$ for all $|z-x|>\mu$ and $|y-x|>\mu$.
	
	\begin{proposition}\label{prop:symmetry}
		Let $n\geqslant 3$  and $\alpha\in(0,n)$.
		If $u\in \mathcal{C}^{2}(\mathbb R^n\setminus\{0\}) \cap L^{2^{*}_{\alpha}}(\mathbb{R}^{n})$ is a positive singular solution to \eqref{ourlimitPDE}.
		Then, it is radially symmetric with respect to the origin.
	\end{proposition}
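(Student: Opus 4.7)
The plan is to execute an integral moving spheres argument adapted to the coupled Hartree system \eqref{ourlimitsystemdual}. By Proposition~\ref{prop:radialsymmetry}, $u$ admits an integral representation, and setting $v := \mathcal{R}_\alpha \ast F(u)$ converts the single equation into the coupled integral system. The key identities \eqref{u-uxmu}--\eqref{v-vxmu}, whose kernels $\mathcal{K}_2$ and $\mathcal{K}_\alpha$ are strictly positive on $\{|z-x|>\mu\}\times\{|y-x|>\mu\}$, supply the monotonicity engine for the argument.

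I would first establish the \emph{starting position}, namely that for $\mu > 0$ sufficiently small, $u \geq u_{0,\mu}$ and $v \geq v_{0,\mu}$ on $\{|y|>\mu\}$. Setting $\Sigma_\mu^u := \{|y|>\mu : u<u_{0,\mu}\}$ and analogously $\Sigma_\mu^v$, I would restrict \eqref{u-uxmu}--\eqref{v-vxmu} to these negativity sets and split the integrands using
\[
f(u)v - f(u_{0,\mu})v_{0,\mu} = \bigl(f(u)-f(u_{0,\mu})\bigr)v + f(u_{0,\mu})\bigl(v-v_{0,\mu}\bigr),
\]
so that Hardy--Littlewood--Sobolev can be applied on $\Sigma_\mu^u \cup \Sigma_\mu^v$. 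Because $\|u\|_{L^{2^*_\alpha}(\Sigma_\mu^u)}\to 0$ as $\mu \to 0^+$, the resulting coupled inequality is contractive and forces $|\Sigma_\mu^u|=|\Sigma_\mu^v|=0$.

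I would then define
\[
\bar\mu := \sup\bigl\{\mu>0 : u \geq u_{0,\nu}\text{ and } v \geq v_{0,\nu} \text{ on } \{|y|>\nu\} \text{ for every }\nu\in(0,\mu]\bigr\}
\]
and rule out $\bar\mu=\infty$: evaluating $u(y)\geq(\mu/|y|)^{n-2}u(\mu^2y/|y|^2)$ at $\mu=\sqrt{|z_0||y|}$ for a fixed reference $z_0\neq 0$ and letting $|y|\to\infty$ yields $u(y)\gtrsim|y|^{-(n-2)/2}$, which contradicts $u\in L^{2^*_\alpha}(\mathbb{R}^n)$ since $(n+\alpha)/2<n$. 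The same HLS-bootstrap, applied at $\bar\mu$, shows the inequality must saturate, so that $u \equiv u_{0,\bar\mu}$ and $v \equiv v_{0,\bar\mu}$ on $\{|y|>\bar\mu\}$. Full rotational symmetry about the origin and radial monotonicity then follow by running the same procedure with respect to any direction---the argument is equivariant under the orthogonal group because the singular locus $\{0\}$ is rotation-invariant---and by combining the resulting Kelvin identities.

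The hardest step is the coupled HLS contraction behind the starting position and its saturation at $\bar\mu$: the Hartree nonlinearity $(\mathcal{R}_\alpha\ast F(u))f(u)$ is a \emph{double} convolution, so $u-u_{0,\mu}$ and $v-v_{0,\mu}$ cannot be estimated independently but only through a system that couples $\mathcal{K}_2$ with $\mathcal{K}_\alpha$. Controlling the resulting constants uniformly in $\mu$---and correctly handling the excluded origin inside the reflected region---is precisely the novelty of adapting the asymptotic integral moving spheres technique of Jin--Li--Xiong to the Hartree setting, as emphasized by the authors.
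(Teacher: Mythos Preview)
Your proposal centers the moving spheres at the origin, and this is where the argument breaks down. Even granting the starting position and the finiteness of $\bar\mu$, the saturation identity $u\equiv u_{0,\bar\mu}$ on $\{|y|>\bar\mu\}$ is only an \emph{inversion} symmetry through the sphere $\partial B_{\bar\mu}$; it does not force rotational symmetry. Your appeal to ``running the same procedure with respect to any direction'' is empty here, because origin-centered spheres carry no directional parameter---the family $\{|y|=\mu\}_{\mu>0}$ is already $O(n)$-invariant, so there is nothing further to vary. (Concretely, any function of the form $|y|^{-(n-2)/2}g(\theta)$ with $g$ nonconstant satisfies $u_{0,\mu}\equiv u$ for every $\mu$, showing that a single inversion identity cannot detect angular dependence.) The equivariance you invoke only says that the argument applies equally well to $u\circ R$ for $R\in O(n)$; it does not compare $u$ with $u\circ R$.

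The paper proceeds quite differently: it centers the spheres at every $x\in\mathbb{R}^n\setminus\{0\}$ with radii $0<\mu<|x|$, so that the origin always lies \emph{outside} $B_\mu(x)$. The singularity then acts as an obstruction: if $\bar\mu(x)<|x|$, one shows via pointwise kernel estimates on $\mathcal K_2$ and $\mathcal K_\alpha$ (not an HLS contraction) that the strict inequality $u_{x,\bar\mu}<u$ propagates and the sphere can be pushed slightly further, contradicting maximality. Hence $\bar\mu(x)=|x|$ for every $x\neq 0$, and the limiting argument $x=Re$, $\mu=R-a$, $R\to\infty$ converts the family of Kelvin inequalities into reflection inequalities across every hyperplane $\{y\cdot e=a\}$ with $a>0$, yielding radial symmetry. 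The role of the singularity in the paper is thus opposite to what you envision: it is not an obstacle to be handled inside the reflected region, but the very mechanism that drives $\bar\mu(x)$ all the way to $|x|$.
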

	
	\begin{proof}
		It is sufficient for us to prove the symmetry of the positive solutions to \eqref{ourlimitsystemdual}. Without loss of generality, we suppose
		\begin{equation}\label{utoinfity}
			\limsup_{\substack{ x \in \mathbb{R}^n \setminus \{0\} \\ x \to 0  }} u(x) = \infty.
		\end{equation}
		\noindent{\bf Claim 1:} For every $x \in \mathbb{R}^n \setminus \{0\}$, there exists a real number $\mu_{2} \in \left(0,|x|\right)$ such that for any $0<\mu<\mu_{2}$, we have
		\begin{equation}\label{54claim1}
			u_{x,\mu}(y) \leqslant u(y) \quad {\rm for} \quad |y-x|\geqslant \mu \quad {\rm and} \quad y\in \mathbb{R}^{n} \setminus \{0\}.
		\end{equation}
		
		\noindent Firstly, we show that there exists $0<\mu_{1}<|x|$ such that for all $0<\mu<\mu_{1}$, it holds
		\begin{equation}\label{54claim1firstly}
			u_{x,\mu}(y) \leqslant u(y) \quad {\rm for} \quad \mu \leqslant|y-x| < \mu_{1} \quad {\rm and} \quad y\in \mathbb{R}^{n} \setminus \{0\}.
		\end{equation}
		Since $u \in \mathcal{C}^{2}(\mathbb R^n\setminus\{0\})$, we assume that $|\nabla\ln u| \leqslant C_{0}$ in $B_{\frac{|x|}{2}}(x)$ for some constant $C_{0}>0$. 
		From this, it follows 
		\begin{align}\label{54claim1derivative}
			\frac{\ud}{\ud r} \left( r^{\frac{n-2}{2}} u(x + r\theta) \right) \nonumber
			=& r^{\frac{n-2}{2} - 1} u(x + r\theta) \left( \frac{n-2}{2} + r \frac{\nabla u \cdot \theta}{u} \right) \nonumber\\
			\geqslant& r^{\frac{n-2}{2} - 1} u(x + r\theta) \left( \frac{n-2}{2} - \left|r \frac{\nabla u \cdot \theta}{u}\right| \right)\nonumber\\
			\geqslant& r^{\frac{n-2}{2} - 1} u(x + r\theta) \left( \frac{n-2}{2} - C_0 r \right) > 0,
		\end{align}
		for any $0<r<\mu_{1}:=\min\left\{\frac{n-2}{2C_{0}},\frac{|x|}{2}\right\}$ and $\theta \in \mathbb{S}^{n-1}$. For $0<\mu \leqslant |y-x|<\mu_{1}$, choose 
		\[
		\theta = \frac{y-x}{|y-x|}, \quad r_{1}=|y-x|, \quad{\rm and} \quad r_{2}=\frac{\mu^{2}r_{1}}{|y-x|^2},
		\]
		which, by \eqref{54claim1derivative}, implies
		\begin{equation*}
			r_{2}^{\frac{n-2}{2}}u(x+r_{2}\theta)\leqslant 	r_{1}^{\frac{n-2}{2}}u(x+r_{1}\theta),
		\end{equation*}
		and so \eqref{54claim1firstly} holds.
		
		\noindent Secondly, we show that there exists $0<\mu_{2}<\mu_{1}<|x|$ such that for all $0<\mu<\mu_{2}$, it holds 
		\begin{equation*}
			u_{x,\mu}(y) \leqslant u(y) \quad{\rm for} \quad r_{2}=\frac{\mu^{2}r_{1}}{|y-x|^2} |y-x|\geqslant \mu_{1} \quad{\rm and} \quad y\in \mathbb{R}^{n} \setminus \{0\}.
		\end{equation*}
		Using the Fatou lemma, one has
		\begin{align*}
			\liminf_{|y| \to \infty} |y|^{n-2} u(y)= \liminf_{|y| \to \infty} \int_{\mathbb{R}^n} \frac{|y|^{n-2} f(u(z))v(z)}{|y - z|^{n-2}} \, \mathrm{d}z\geqslant \int_{\mathbb{R}^n} f(u(z))v(z) \ud z > 0.
		\end{align*}
		Thus, there exists two constants $C_{1},R_{1}>0$ such that 
		\begin{equation}\label{54claim1secondly}
			u(y)\geqslant \frac{C_{1}}{|y|^{n-2}} \quad \text{for} \quad |y|\geqslant R_{1} \quad{\rm and} \quad y\in \mathbb{R}^{n} \setminus \{0\}. 
		\end{equation}
		On the other hand, if $0<|y|<R_{1}$, we obtain
		\begin{equation*}
			u(y)\geqslant \int_{B_{R_{1}}}\frac{f(u(z))v(z)}{|y - z|^{n-2}} \ud z \geqslant \left(2R_{1}\right)^{2-n}\int_{B_{R_{1}}}f(u(z))v(z) \ud z >0.
		\end{equation*}
		Combining this with \eqref{54claim1secondly}, we conclude that there exists a constant $C>0$ such that
		\begin{equation*}
			u(y) \geqslant \frac{C}{|y-x|^{n-2}} \quad \text{for} \quad |y-x|\geqslant \mu_{1} \quad{\rm and} \quad y\in \mathbb{R}^{n} \setminus \{0\}.
		\end{equation*}
		Therefore, by choosing a sufficiently small $\mu_{2} \in (0,\mu_{1})$, for any $0<\mu<\mu_{2}$, we have 
		\begin{align*}
			u_{x,\mu}(y) &= \left(\frac{\mu}{|y-x|}\right)^{n-2}u\left(x+\frac{\mu^{2}(y-x)}{|y-x|^{2}}\right)\\
			&\leqslant \frac{1}{|y-x|^{n-2}} \left(\mu^{n-2}\sup_{\substack{B_{\mu_{1}}(x)}}u\right)\\
			&\leqslant u(y) .
		\end{align*}
		for \(|y-x|\geqslant \mu_{1}\) and \(y\in \mathbb{R}^{n} \setminus\{0\}\).
		This, together with \eqref{54claim1firstly}, implies that \eqref{54claim1} holds, and so Claim 1 is proved.
		
		Next, by Claim 1, the critical parameter below is well-defined and positive
		\begin{equation}\label{barmu}
			\bar{\mu}(x):=\sup_{0<\lambda\leqslant |x|}\left\{0<\mu<\lambda :  u_{x,\mu}(y)\leqslant u(y)\; \text{for} \; |y-x|\geqslant \mu \; \text{and} \;  y\in\mathbb{R}^n\setminus \{0\}\right\}.
		\end{equation}
		For the sake of brevity, we denote $\bar{\mu}=\bar{\mu}(x)$. 
		
		\noindent{\bf Claim 2:} $\bar{\mu}=|x|$ for any $x \in \mathbb{R}^n\setminus \{0\}$.
		
		\noindent By contradiction, assume that $\bar{\mu}<|x|$. We will show that there exists $\varepsilon>0$ such that for any $\mu \in [ \bar{\mu},\bar{\mu}+\varepsilon )$,
		\begin{equation*}
			u_{x,\mu}(y) \leqslant u(y) \quad \text{for all} \quad |y-x|\geqslant \mu \quad \text{and} \quad y\in \mathbb{R}^{n} \setminus \{0\}.
		\end{equation*} 
		By the definition of $\bar{\mu}$, we know
		\begin{equation}\label{barmulemma}
			u_{x,\bar{\mu}}(y) \leqslant u(y) \quad \text{for all} \quad |y-x|\geqslant \mu \quad \text{and} \quad y\in \mathbb{R}^{n} \setminus \{0\},
		\end{equation} 
		from which it follows 
		\begin{equation*}
			u_{x,\bar{\mu}}(y) \not\equiv u(y) \quad \text{for all} \quad |y-x|\geqslant \mu \quad \text{and} \quad y\in \mathbb{R}^{n} \setminus \{0\}.
		\end{equation*} 
		Otherwise, $u_{x,\bar{\mu}}(y) \equiv u(y)$, which is impossible. In fact, for any $|y-x|\geqslant \bar{\mu}$, there exists a constant $c>0$ such that
		\begin{align*}
			u_{x,\bar{\mu}}(y) =\left(\frac{\bar{\mu}}{|y-x|}\right)^{n-2}u\left(x+\frac{\bar{\mu}^{2}(y-x)}{|y-x|^{2}}\right)
			\leqslant u\left(x+\frac{\bar{\mu}^{2}(y-x)}{|y-x|^{2}}\right)\leqslant \sup_{\substack{B_{\bar{\mu}}(x)}}u\leqslant c.
		\end{align*}
		On the other hand, by \eqref{utoinfity}, we can find $y$ satisfying $|y-x|\geqslant \bar{\mu}$ and $u(y)>c$, which is a contradiction.
		For $|y-x|\geqslant \bar{\mu}$ and $y\in \mathbb{R}^{n} \setminus \{0\}$,  by \eqref{v-vxmu} and \eqref{barmulemma}, we obtain
		\begin{equation*}
			v(y)-v_{x,\bar{\mu}}(y)=\int_{|z-x|\geqslant\bar{\mu}}\mathcal{K}_{\alpha}(x,\bar{\mu};z,y)\left[F(u(z))-F(u_{x,\bar{\mu}}(z))\right]\ud z \geqslant 0.
		\end{equation*} 
		Thus, by combining this with \eqref{u-uxmu}, 
		\begin{align*}
			u(y)-u_{x,\bar{\mu}}(y)&=\int_{|z-x|\geqslant\bar{\mu}}\mathcal{K}_{2}(x,\bar{\mu};z,y)\left[f(u(z))v(z)-f(u_{x,\bar{\mu}}(z))v_{x,\bar{\mu}}(z)\right]\ud z \\
			&\geqslant \int_{|z-x|\geqslant\bar{\mu}}\mathcal{K}_{2}(x,\bar{\mu};z,y)v_{x,\bar{\mu}}(z)\left[f(u(z))-f(u_{x,\bar{\mu}}(z))\right]\ud z .
		\end{align*} 
		Consequently, we get
		\begin{equation}\label{uxbarmu<u}
			u_{x,\bar{\mu}}(y) < u(y) \quad \text{for all} \quad |y-x|\geqslant \bar{\mu} \quad \text{and} \quad y\in \mathbb{R}^{n} \setminus \{0\},
		\end{equation}
		which implies
		\begin{equation*}
			v_{x,\bar{\mu}}(y) < v(y) \quad \text{for all} \quad |y-x|\geqslant \bar{\mu} \quad \text{and} \quad y\in \mathbb{R}^{n} \setminus \{0\}.
		\end{equation*}
		Applying the Fatou lemma again, we find
		\begin{align*}
			\liminf_{|y| \to \infty} |y-x|^{n-2} \left(u(y)-u_{x,\bar{\mu}}(y)\right)&> \liminf_{|y| \to \infty} \int_{|z-x|\geqslant\bar{\mu}}|y-x|^{n-2}\mathcal{K}_{2}(x,\bar{\mu};z,y)v_{x,\bar{\mu}}(z)\left[f(u(z))-f(u_{x,\bar{\mu}}(z))\right]\ud z \\
			&\geqslant \int_{|z-x|\geqslant\bar{\mu}}\left[1-\left(\frac{\bar{\mu}}{|z-x|}\right)^{n-2}\right]v_{x,\bar{\mu}}(z)\left[f(u(z))-f(u_{x,\bar{\mu}}(z))\right]\ud z >0.
		\end{align*}
		Hence, there exists $C_{2},R_{2}>0$ such that 
		\begin{equation}\label{u>R2case}
			u(y)-u_{x,\bar{\mu}}(y) \geqslant \frac{C_{2}}{|y-x|^{n-2}} \quad \text{for}\quad |y-x|\geqslant R_{2} \quad \text{and} \quad y\in \mathbb{R}^{n} \setminus \{0\}.
		\end{equation}
		By the definition of $\mathcal{K}_{2}(x,\mu;y,z)$, for $|y-x|\geqslant \bar{\mu}+1$, $|z-x|\geqslant \bar{\mu}+2$, and $\bar{\mu} \leqslant \mu <|x|$, there holds
		\begin{equation*}
			\frac{\delta_{1}}{|y-z|^{n-2}} \leqslant \mathcal{K}_{2}(x,\mu;z,y) \leqslant \frac{1}{|y-z|^{n-2}}
		\end{equation*}
		for some $0<\delta_{1}<1$. Therefore, by \eqref{uxbarmu<u}, for $\bar{\mu}+1 \leqslant |y-x| \leqslant R_{2}$ and $y \in \mathbb{R}^{n} \setminus \{0\}$, we have 
		\begin{align*}
			|y-x|^{n-2}\left(u(y)-u_{x,\bar{\mu}}(y)\right) &> \int_{\bar{\mu}+2\leqslant|z-x|\leqslant\bar{\mu}+4}|y-x|^{n-2}\mathcal{K}_{2}(x,\bar{\mu};z,y)v_{x,\bar{\mu}}(z)\left[f(u(z))-f(u_{x,\bar{\mu}}(z))\right]\ud z \\
			&\geqslant \int_{\bar{\mu}+2\leqslant|z-x|\leqslant\bar{\mu}+4}\frac{\delta_{1}|y-x|^{n-2}}{|y-z|^{n-2}}v_{x,\bar{\mu}}(z)\left[f(u(z))-f(u_{x,\bar{\mu}}(z))\right]\ud z \\
			&\geqslant c_{2}\int_{\bar{\mu}+2\leqslant|z-x|\leqslant\bar{\mu}+4}v_{x,\bar{\mu}}(z)\left[f(u(z))-f(u_{x,\bar{\mu}}(z))\right]\ud z >0
		\end{align*}
		for some $c_{2}>0$, which combined with \eqref{u>R2case}, implies
		\begin{equation*}
			u(y)-u_{x,\bar{\mu}}(y) \geqslant \frac{\varepsilon_{1}}{|y-x|^{n-2}} \quad \text{for} \quad |y-x|\geqslant \bar{\mu}+1 \quad \text{and} \quad y\in \mathbb{R}^{n} \setminus \{0\}.
		\end{equation*}
		for some $\varepsilon_{1} \in (0,1)$. Consequently, by the explicit formula of $u_{x,\mu}$, there exists $\varepsilon_{2} \in (0,\varepsilon_{1})$ such that for all $\bar{\mu}\leqslant \mu \leqslant \bar{\mu}+\varepsilon_{2}<|x|$, we arrive at
		\begin{align}\label{u-uxmugeq}
			u(y)-u_{x,\mu}(y) &\geqslant \frac{\varepsilon_{1}}{|y-x|^{n-2}} +\left(u_{x,\bar{\mu}}-u_{x,\mu}\right)(y) \nonumber\\
			&\geqslant \frac{\varepsilon_{1}}{2|y-x|^{n-2}} \quad {\rm  for} \quad |y-x|\geqslant \bar{\mu}+1  \quad {\rm  and} \quad y\in \mathbb{R}^{n} \setminus \{0\}.
		\end{align}
		Now, we show that there exists $\varepsilon_{3}\in (0,\frac{\varepsilon_{2}}{2})$ (to be chosen later) such that for all $\bar{\mu}\leqslant \mu \leqslant \bar{\mu}+\varepsilon_{3}<|x| $ and $|y-x|\geqslant\mu$, $v(y)\geqslant v_{x,\mu}(y)$. Indeed, for $|y-x|\geqslant\mu$, $y\in \mathbb{R}^{n} \setminus \{0\}$, by \eqref{v-vxmu} and \eqref{u-uxmugeq}, we obtain
		\begin{align}\label{v-vxmu3}
			v(y)-v_{x,\mu}(y)&=\int_{|z-x|\geqslant\mu}\mathcal{K}_{\alpha}(x,\mu;z,y)\left[F(u(z))-F(u_{x,\mu}(z))\right]\ud z \nonumber\\
			&\geqslant\int_{\mu\leqslant|z-x|\leqslant\mu+\varepsilon_{3}}\mathcal{K}_{\alpha}(x,\mu;z,y)\left[F(u(z))-F(u_{x,\mu}(z))\right]\ud z\nonumber\\
			&+\int_{\mu+\varepsilon_{3}<|z-x|\leqslant\bar{\mu}+1}\mathcal{K}_{\alpha}(x,\mu;z,y)\left[F(u_{x,\bar{\mu}}(z))-F(u_{x,\mu}(z))\right]\ud z\nonumber\\
			&+\int_{\bar{\mu}+2\leqslant|z-x|\leqslant\bar{\mu}+3}\mathcal{K}_{\alpha}(x,\mu;z,y)\left[F(u(z))-F(u_{x,\mu}(z))\right]\ud z.
		\end{align}
		Since $\| u \| _{C^{1}\left(B_{\bar{\mu}+\varepsilon_{3}}(x)\right)}<C$ for some $C>0$ independent of $\varepsilon_{3}$, for any $\mu \in [\bar{\mu},\bar{\mu}+\varepsilon_{3})$, we get
		\begin{equation*}
			\left|F(u(z))-F(u_{x,\mu}(z))\right| \leqslant C\left(|z-x|-\mu\right)  \quad {\rm  for} \quad \mu\leqslant|z-x|\leqslant\mu+\varepsilon_{3},
		\end{equation*}
		and
		\begin{equation*}
			\left|F(u_{x,\bar{\mu}}(z))-F(u_{x,\mu}(z))\right| \leqslant C\left(\mu-\bar{\mu}\right)\leqslant C\varepsilon_{3}  \quad {\rm  for} \quad \mu\leqslant|z-x|\leqslant\bar{\mu}+1.
		\end{equation*}
		Furthermore, by \eqref{u-uxmugeq}, there exists $\delta_{1}>0$ such that for $\mu\in [\bar{\mu},\bar{\mu}+\varepsilon_{4})$, 
		\begin{equation*}
			F(u(z))-F(u_{x,\mu}(z))\geqslant \delta_{1} \quad {\rm  for} \quad \bar{\mu}+2\leqslant|z-x|\leqslant\bar{\mu}+3.
		\end{equation*}
		Thus, based on the estimates outlined above, by \eqref{v-vxmu3}, we find
		\begin{align}\label{Kalphaestimate2}
			\nonumber
			&v(y)-v_{x,\mu}(y)\\
			&\geqslant-C\int_{\mu\leqslant|z-x|\leqslant\mu+\varepsilon_{3}}\mathcal{K}_{\alpha}(x,\mu;z,y)\left(|z-x|-\mu\right)\ud z-C\varepsilon_{3}\int_{\mu+\varepsilon_{4}<|z-x|\leqslant\bar{\mu}+1}\mathcal{K}_{\alpha}(x,\mu;z,y)\ud z\nonumber\\
			&+\delta_{1}\int_{\bar{\mu}+2\leqslant|z-x|\leqslant\bar{\mu}+3}\mathcal{K}_{\alpha}(x,\mu;z,y)\ud z.\nonumber\\
			&=-C\int_{\mu\leqslant|z|\leqslant\mu+\varepsilon_{3}}\mathcal{K}_{\alpha}(0,\mu;z,y-x)\left(|z|-\mu\right)\ud z-C\varepsilon_{3}\int_{\mu+\varepsilon_{4}<|z|\leqslant\bar{\mu}+1}\mathcal{K}_{\alpha}(0,\mu;z,y-x)\ud z\nonumber\\
			&+\delta_{1}\int_{\bar{\mu}+2\leqslant|z|\leqslant\bar{\mu}+3}\mathcal{K}_{\alpha}(0,\mu;z,y-x)\ud z.
		\end{align}
		By the definition of  $\mathcal{K}_{\alpha}(0,\mu;z,y-x)$, we have $\mathcal{K}_{\alpha}(0,\mu;z,y-x)=0$ for $|y-x|=\mu$ and 
		\begin{equation*}
			(y-x) \cdot \nabla_{y}  \mathcal{K}_{\alpha}(0,\mu;z,y-x) \Big|_{|y-x|=\mu}=(n-\alpha)|y-x-z|^{\alpha-n}(|z|^2-|y-x|^2)>0
		\end{equation*}
		for $|z|\geqslant \bar{\mu}+2$. Using the positivity and smoothness of $\mathcal{K}_{\alpha}$, for $\bar{\mu} \leqslant \mu \leqslant |y-x| <\bar{\mu}+1$ and $\bar{\mu}+2 <|z| \leqslant M_{1} <\infty$, we obtain
		\begin{equation}\label{KalphaestimateA}
			\frac{\delta_{2}}{|y-x-z|^{n-\alpha}}\left(|y-x|-\mu\right) \leqslant\mathcal{K}_{\alpha}(0,\mu;z,y-x)\leqslant\frac{\delta_{3}}{|y-x-z|^{n-\alpha}}\left(|y-x|-\mu\right), 
		\end{equation}
		where $0<\delta_{2} \leqslant \delta_{3}<\infty$. Moreover, if $M_{1}\gg1$ is large enough, then there holds
		\begin{equation*}
			0 \leqslant c_{3} \leqslant (y-x) \cdot \nabla_{y}  \left(|y-x-z|^{n-\alpha}\mathcal{K}_{\alpha}(0,\mu;z,y-x)\right) \leqslant c_{4} <\infty
		\end{equation*}
		for $\mu \leqslant|y-x| <\bar{\mu}+1$ and $z \geqslant M_{1}$, which implies that \eqref{KalphaestimateA} also holds for $\mu \leqslant|y-x| <\bar{\mu}+1$ and $z \geqslant M_{1}$. Besides, by definition, there exists $0<\delta_{4}\leqslant1$ such that
		\begin{equation}\label{KalphaestimateB}
			\frac{\delta_{4}}{|y-x-z|^{n-\alpha}} \leqslant\mathcal{K}_{\alpha}(0,\mu;z,y-x)\leqslant\frac{1}{|y-x-z|^{n-\alpha}}, 
		\end{equation}
		when $|y-x|\geqslant\bar{\mu}+1$ and $|z|\geqslant \bar{\mu}+2$. Therefore, for any $\mu\in[\bar{\mu},\bar{\mu}+\varepsilon_{3})$ and for  $\bar{\mu}+1\leqslant|y-x|\leqslant2(\bar{\mu}+3)$ and $y \in\mathbb{R}^{n}\setminus\{0\}$, we have
		\begin{align}\label{v-vxmu1.0}
			\nonumber
			v(y)-v_{x,\mu}(y)
			\geqslant
			&-C\varepsilon_{3}\int_{\mu\leqslant|z|\leqslant\bar{\mu}+1}\frac{1}{|y-x-z|^{n-\alpha}}\ud z+\delta_{1}\delta_{4}\int_{\bar{\mu}+2\leqslant|z|\leqslant\bar{\mu}+3}\frac{1}{|y-x-z|^{n-\alpha}}\ud z\\
			\geqslant&\left(C_{1}\delta_{1}\delta_{4}-C_{2}\varepsilon_{3}\right)\geqslant0.
		\end{align}
		for some  constants $C_{1},C_{2}>0$. For any $\mu\in[\bar{\mu},\bar{\mu}+\varepsilon_{3})$ and for $|y-x|\geqslant2(\bar{\mu}+3)$, $y \in\mathbb{R}^{n}\setminus\{0\}$, we have
		\begin{align}\label{v-vxmu2.0}
			\nonumber
			v(y)-v_{x,\mu}(y)
			\geqslant
			&-C\varepsilon_{3}\int_{\mu\leqslant|z|\leqslant\bar{\mu}+1}\frac{1}{|y-x-z|^{n-\alpha}}\ud z+\delta_{1}\delta_{4}\int_{\bar{\mu}+2\leqslant|z|\leqslant\bar{\mu}+3}\frac{1}{|y-x-z|^{n-\alpha}}\ud z\\\nonumber
			\geqslant&-C\varepsilon_{3}\int_{\mu\leqslant|z|\leqslant\bar{\mu}+1}\frac{1}{(|y-x|-|z|)^{n-\alpha}}\ud z+\delta_{1}\delta_{4}\int_{\bar{\mu}+2\leqslant|z|\leqslant\bar{\mu}+3}\frac{1}{(|y-x|+|z|)^{n-\alpha}}\ud z\\\nonumber
			\geqslant&-C\varepsilon_{3}\int_{\mu\leqslant|z|\leqslant\bar{\mu}+1}\left(\frac{2}{|y-x|}\right)^{n-\alpha}\ud z+\delta_{1}\delta_{4}\int_{\bar{\mu}+2\leqslant|z|\leqslant\bar{\mu}+3}\left(\frac{1}{2|y-x|}\right)^{n-\alpha}\ud z\\
			\geqslant&\left(C_{3}\delta_{1}\delta_{4}-C_{4}\varepsilon_{3}\right)\frac{1}{|y-x|^{n-\alpha}}\geqslant0
		\end{align}
		for some constants $C_{3},C_{4}>0$.
		Moreover, based on the integral estimates for $\mathcal{K}_{\alpha}$ derived in the proof of \cite[Proposition 3.2]{arxiv:1901.01678}, for $\mu \leqslant |y-x| <\bar{\mu}+1$, we get 
		\begin{align}\label{Kalphaestimate3}
			\int_{\mu\leqslant|z|\leqslant\mu+\varepsilon_{3}}\mathcal{K}_{\alpha}(0,\mu;z,y-x)\left(|z|-\mu\right)\ud z \leqslant& \left|\int_{\mu\leqslant|z|\leqslant\mu+\varepsilon_{3}}\left(\frac{|z|-\mu}{|y-x-z|^{n-\alpha}}-\frac{|z|-\mu}{\left|(y-x)^{0,\mu}-z\right|^{n-\alpha}}\right)\ud z\right|\nonumber\\
			&+\varepsilon_{3}\int_{\mu\leqslant|z|\leqslant\mu+\varepsilon_{3}}\left|\left(\frac{\mu}{|y-x|}\right)^{n-\alpha}-1\right|\frac{1}{\left|(y-x)^{0,\mu}-z\right|^{n-\alpha}}\ud z\nonumber\\
			&\leqslant C\left(|y-x|-\mu\right)\varepsilon_{3}^{\frac{\alpha}{n}}+C\varepsilon_{3}\left(|y-x|-\mu\right)\nonumber\\
			&\leqslant C\left(|y-x|-\mu\right)\varepsilon_{3}^{\frac{\alpha}{n}}, 
		\end{align}
		and 
		\begin{align}\label{Kalphaestimate4}
			\int_{\mu+\varepsilon_{3}\leqslant|z|\leqslant\bar{\mu}+1}\mathcal{K}_{\alpha}(0,\mu;z,y-x)\ud z \leqslant& \left|\int_{\mu+\varepsilon_{3}\leqslant|z|\leqslant\bar{\mu}+1}\left(\frac{1}{|y-x-z|^{n-\alpha}}-\frac{1}{\left|(y-x)^{0,\mu}-z\right|^{n-\alpha}}\right)\ud z\right|\nonumber\\
			&+\int_{\mu+\varepsilon_{3}\leqslant|z|\leqslant\bar{\mu}+1}\left|\left(\frac{\mu}{|y-x|}\right)^{n-\alpha}-1\right|\frac{1}{\left|(y-x)^{0,\mu}-z\right|^{n-\alpha}}\ud z\nonumber\\
			\leqslant& C\left(\varepsilon_{3}^{\alpha-1}+|\ln\varepsilon_{3}|+1\right)\left(|y-x|-\mu\right).
		\end{align}
		Using \eqref{KalphaestimateA}, for $\mu \leqslant|y-x| <\bar{\mu}+1$, it follows
		\begin{equation*}
			\int_{\bar{\mu}+2\leqslant|z|\leqslant\bar{\mu}+3}\mathcal{K}_{\alpha}(0,\mu;z,y-x)\ud z \geqslant \delta_{2}\left(|y-x|-\mu\right)\int_{\bar{\mu}+2\leqslant|z|\leqslant\bar{\mu}+3} \frac{1}{|y-x-z|^{n-\alpha}} \ud z \geqslant C\left(|y-x|-\mu\right). 
		\end{equation*}
		This, together with \eqref{Kalphaestimate2}, \eqref{Kalphaestimate3} and \eqref{Kalphaestimate4}, implies that for $\mu \leqslant |y-x| < \bar{\mu}+1$ and $\varepsilon_{3}$ small enough, one has
		\begin{equation*}
			v(y)-v_{x,\mu}(y)\geqslant C(|y-x|-\mu)\left(\delta_{1}-\varepsilon_{3}^{\frac{\alpha}{n}}-\varepsilon_{3}\left(\varepsilon_{3}^{\alpha-1}+|\ln\varepsilon_{3}|+1\right)\right) \geqslant 0.
		\end{equation*}
		Therefore, combining with \eqref{v-vxmu1.0} and \eqref{v-vxmu2.0}, it holds
		\begin{equation}\label{v-vxmulast}
			v(y)-v_{x,\mu}(y)\geqslant 0 \quad {\rm  for} \quad |y-x| \geqslant \mu \quad {\rm  and} \quad y \in \mathbb{R}^n \setminus \{0\}
		\end{equation}
		for any $\mu \in [\bar{\mu},\bar{\mu}+\varepsilon_{3})$. Substituting \eqref{barmulemma}, \eqref{u-uxmugeq}, and \eqref{v-vxmulast} into \eqref{u-uxmu}, for any $\mu \in [\bar{\mu},\bar{\mu}+\varepsilon)$ (where $ \varepsilon \in (0,\frac{\varepsilon_{3}}{2})$ will be determined later) and $\mu \leqslant |y-x|<\bar{\mu}+1$, $y\in \mathbb{R}^{n} \setminus \{0\}$, we obtain
		\begin{align}\label{u-uxmu3}
			u(y)-u_{x,\mu}(y)&=\int_{|z-x|\geqslant\mu}\mathcal{K}_{2}(x,\mu;z,y)\left[f(u(z))v(z)-f(u_{x,\mu}(z))v_{x,\mu}(z)\right]\ud z \nonumber\\	
			&\geqslant \int_{|z-x|\geqslant\mu}\mathcal{K}_{2}(x,\mu;z,y)v_{x,\mu}(z)\left[f(u(z))-f(u_{x,\mu}(z))\right]\ud z\nonumber\\
			&\geqslant\int_{\mu\leqslant|z-x|\leqslant\mu+\varepsilon}\mathcal{K}_{2}(x,\mu;z,y)v_{x,\mu}(z)\left[f(u(z))-f(u_{x,\mu}(z))\right]\ud z\nonumber\\
			&+\int_{\mu+\varepsilon<|z-x|\leqslant\bar{\mu}+1}\mathcal{K}_{2}(x,\mu;z,y)v_{x,\mu}(z)\left[f(u_{x,\bar{\mu}}(z))-f(u_{x,\mu}(z))\right]\ud z\nonumber\\
			&+\int_{\bar{\mu}+2\leqslant|z-x|\leqslant\bar{\mu}+3}\mathcal{K}_{2}(x,\mu;z,y)v_{x,\mu}(z)\left[f(u(z))-f(u_{x,\mu}(z))\right]\ud z.
		\end{align}
		Since $\| u \| _{C^{1}\left(B_{\bar{\mu}+\varepsilon}(x)\right)}<c$ for some $c>0$ independent of $\varepsilon$ and $u>0$, we have 
		\begin{equation*}
			\left|f(u(z))-f(u_{x,\mu}(z))\right| \leqslant C_{0}\left(|z-x|-\mu\right) \quad {\rm  for} \quad \mu\leqslant|z-x|\leqslant\mu+\varepsilon
		\end{equation*}
		and
		\begin{equation*}
			\left|f(u_{x,\bar{\mu}}(z))-f(u_{x,\mu}(z))\right| \leqslant C_{0}\left(\mu-\bar{\mu}\right)\leqslant C_{0}\varepsilon \quad {\rm  for} \quad \mu\leqslant|z-x|\leqslant\bar{\mu}+1
		\end{equation*}
		for some $C_{0}>0$. By \eqref{u-uxmugeq}, it is also easy to see that there exists $\delta_{2}>0$ such that for $\mu\in [\bar{\mu},\bar{\mu}+\varepsilon)$, and
		\begin{equation*}
			f(u(z))-f(u_{x,\mu}(z))\geqslant \delta_{2}\quad {\rm  for} \quad \bar{\mu}+2\leqslant|z-x|\leqslant\bar{\mu}+3.
		\end{equation*}
		Since $u\in L^{2^{*}_{\alpha}}(\mathbb{R}^{n})$, we know
		\[
		\| v \| _{L^{\infty}(B_{\bar{\mu}+\varepsilon}(x))}<c \quad {\rm and} \quad v>0 \quad {\rm in} \quad B_{\bar{\mu}+\varepsilon}(x),
		\]
		where we may choose $0<\varepsilon < \frac{|x|-\bar{\mu}}{2}$ if necessary.
		By substituting this and the inequalities above into \eqref{u-uxmu3}, it yields
		\begin{align*}
			u(y)-u_{x,\mu}(y)
			\geqslant&-C\int_{\mu\leqslant|z|\leqslant\mu+\varepsilon}\mathcal{K}_{2}(0,\mu;z,y-x)\left(|z|-\mu\right)\ud z-C\varepsilon\int_{\mu+\varepsilon<|z|\leqslant\bar{\mu}+1}\mathcal{K}_{2}(0,\mu;z,y-x)\ud z\nonumber\\
			&+C\delta_{2}\int_{\bar{\mu}+2\leqslant|z|\leqslant\bar{\mu}+3}\mathcal{K}_{2}(0,\mu;z,y-x)\ud z.
		\end{align*}
		For $\mu\leqslant|y-x|<\bar{\mu}+1$, by a similar estimate for the kernel $\mathcal{K}_{2}$, we obtain
		\begin{equation*}
			u(y)-u_{x,\mu}(y)\geqslant C(|y-x|-\mu)\left(\delta_{2}-\varepsilon^{\frac{2}{n}}-\varepsilon\left(\varepsilon+|\ln\varepsilon|+1\right)\right) \geqslant 0, 
		\end{equation*}
		if $\varepsilon\ll1$ is sufficiently small, this inequality, combined with \eqref{u-uxmugeq}, implies that  
		\begin{equation*}
			u_{x,\mu}(y) \leqslant u(y) \quad {\rm  for} \quad |y-x|\geqslant \mu \quad {\rm  and} \quad y\in \mathbb{R}^{n} \setminus \{0\}.
		\end{equation*} 
		This contradicts the definition of $\bar{\mu}$, and so $\bar{\mu}=|x|$. 
		
		\noindent{\bf Claim 3:} $u$ is radially symmetric with respect to the origin.
		
		\noindent By Claim 2, we know that for every $x \in \mathbb{R}^n \setminus \{0\}$, 
		\begin{equation*}
			u_{x,\mu}(y) \leqslant u(y) \quad {\rm  for} \quad |y-x|\geqslant \mu \quad {\rm  and} \quad y\in \mathbb{R}^{n} \setminus \{0\}\quad {\rm  with} \quad 0<\mu<|x|.
		\end{equation*} 
		Thus, for any unit vector $e \in \mathbb{R}^{n}$, parameters $a>0$, $R>a$, and $y \in \mathbb{R}^n \setminus \{0\}$ satisfying $(y-ae)\cdot e <0$, set $\mu=R-a$ and $x=Re$. Since
		\begin{equation*}
			|y-x|^{2} = |y-Re|^{2} = |y-ae|^2+(R-a)^2-2(R-a)(y-ae)\cdot e > \mu^{2}, 
		\end{equation*}
		we have
		\begin{equation*}
			u(y) \geqslant u_{x,\mu}(y) =\left(\frac{\mu}{|y-x|}\right)^{n-2}u\left(x+\frac{\mu^2(y-x)}{|y-x|^2}\right).
		\end{equation*}
		Form this, we conclude that for \(R\gg1\) arbitrarily large, it holds
		\begin{equation*}
			\frac{\mu^2}{|y-x|^2}=\frac{(R-a)^2}{\left|y-Re\right|^2} =1+\mathrm{o}(1) \quad {\rm as} \quad R\to\infty
		\end{equation*}
		and
		\begin{align*}
			x+\frac{\mu^2(y-x)}{|y-x|^2}
			&=\frac{\left(R-a\right)^2y}{\left|y-Re\right|^2}+\frac{\left(|y|^2-2R(y \cdot e -a)-a^2\right)Re}{\left|y-Re\right|^2} \\
			&=y-2(y\cdot e -a)e+\mathrm{o}(1) \quad {\rm as} \quad R\to\infty,
		\end{align*}
		which in turn yields
		\begin{equation*}
			u(y) \geqslant u(y-2(y\cdot e -a)e).
		\end{equation*}
		Since the unit vector $e$ and $a>0$ are arbitrary, we get the radial symmetry of $u$.
	\end{proof}
	
	\section{Local asymptotic behavior }\label{sec:asymptotics}
	This section proves the local asymptotic behavior in Theorem~\ref{thm2}.
	For this, we are based on the approach of Jin, Li and Xiong \cite{arxiv:1901.01678,MR3694645}, which is based on integral equivalence formulas and unifies the papers \cite{MR982351,MR1666838,MR3198648}.
	Our proof's main difference is dealing with the double convolution kernel on RHS of \eqref{ourlocalPDE}.
	For the proof of the upper bound estimates, we need to adapt some estimates from Dai, Liu, and Qin \cite{MR4226994} and Ma and Zhao \cite{MR2592284} for running the integral moving spheres techniques.
	For the proof of the lower bound estimates, we rely on a removable classification theorem, which is based on the sign of the so-called Pohozaev invariant.
	We emphasize that the definition of this homological invariant is of independent interest in this manuscript.
	\subsection{Integral representation}
	We prove that Eq. \eqref{ourlocalPDE} is equivalent to its integral counterpart \eqref{ourlocalPDEdual}. 
	Under suitable assumptions, we demonstrate in this subsection that every singular positive solution of the differential equation \eqref{ourlocalPDE} locally satisfies the integral equation \eqref{ourlocalPDEdual}. 
	For convenience, we consider the extended equation below
	\begin{flalign}\tag{$\mathcal P_{n,\alpha,2}$}\label{ourlocalPDER=2}
		-\Delta u=(\mathcal{R}_\alpha\ast F(u))f(u) \quad {\rm in} \quad {B}^*_2.
	\end{flalign}
	
	\begin{proposition}\label{togetlocaldis}
		Let $n\geqslant 3$  and $\alpha\in(0,n)$.
		If $u\in \mathcal{C}^2(\bar{B}_2^*) \cap L^{2^{*}_{\alpha}}(B_{2})$ is a positive singular solution to \eqref{ourlocalPDER=2}, then $(\mathcal{R}_\alpha\ast F(u))f(u) \in L^{1} (B_{2})$, and $u$ is a distribution solution in $B_{2}$, {\it i.e.}, it satisfies
		\begin{flalign}\tag{$\mathcal D_{n,\alpha,2}$}\label{distributionalsenselocalR=2}
			-\int_{B_2}u\Delta\varphi\ud x=\int_{B_2} (\mathcal{R}_\alpha\ast F(u))f(u)\varphi \ud x
		\end{flalign}
		for all $\varphi\in \mathcal{C}_{c}^{\infty}(B_2)$.
	\end{proposition}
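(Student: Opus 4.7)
The plan is to mirror the proof of Lemma~\ref{globaldistributional}, carefully adapting the cutoff argument to the bounded punctured domain $B_2^*$. The key observation is that the singularity lies only at the origin (since $u \in \mathcal{C}^2(\bar{B}_2^*)$), so all boundary-type difficulties reduce to handling a thin annular shell around the origin, exactly as in the global case.

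First, I would establish the integrability $(\mathcal{R}_\alpha\ast F(u))f(u) \in L^1(B_2)$. Let $\eta_\varepsilon \in \mathcal{C}^\infty(\mathbb{R}^n)$ be the cutoff from \eqref{cutoff}, pick a smooth $\phi \in \mathcal{C}^\infty_c(B_2)$ with $\phi \equiv 1$ on $B_{3/2}$, and set $\phi_\varepsilon := (\phi \eta_\varepsilon)^q$ with $q = \frac{2n+2\alpha}{2+\alpha}$. Since $u$ is a classical solution on $B_2^*$ and $\phi_\varepsilon$ is compactly supported there, I can multiply \eqref{ourlocalPDER=2} by $\phi_\varepsilon$ and integrate by parts. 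By the H\"older estimates used in Lemma~\ref{globaldistributional} together with $u \in L^{2^*_\alpha}(B_2)$, the right-hand side is bounded by
\begin{equation*}
\int_{B_2} (\mathcal{R}_\alpha\ast F(u))f(u)\, \phi_\varepsilon \,\ud x \leqslant C\bigl(1+\varepsilon^{\frac{(n-2)\alpha}{n+\alpha}}\bigr)\|u\|_{L^{2^*_\alpha}(B_2)}^{2^*_\alpha -1} \leqslant C,
\end{equation*}
uniformly in $\varepsilon$. Sending $\varepsilon \to 0$ via monotone convergence yields $(\mathcal{R}_\alpha\ast F(u))f(u) \in L^1(B_{3/2})$; integrability on $B_2 \setminus B_{3/2}$ is automatic because $u$ is $\mathcal{C}^2$ there.

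Next I establish the distributional identity \eqref{distributionalsenselocalR=2}. For an arbitrary $\zeta \in \mathcal{C}^\infty_c(B_2)$, set $\xi_\varepsilon := \zeta\eta_\varepsilon \in \mathcal{C}^\infty_c(B_2^*)$, multiply \eqref{ourlocalPDER=2} by $\xi_\varepsilon$, and integrate by parts to obtain
\begin{equation*}
\int_{B_2} (\mathcal{R}_\alpha\ast F(u))f(u)\,\xi_\varepsilon \,\ud x = \int_{B_2} u\eta_\varepsilon(-\Delta\zeta)\,\ud x + \int_{B_2} u H_\varepsilon \,\ud x,
\end{equation*}
where $H_\varepsilon := \zeta(-\Delta \eta_\varepsilon) - 2\nabla\zeta\cdot\nabla\eta_\varepsilon$ is supported in $B_{2\varepsilon}\setminus B_\varepsilon$ and satisfies $|H_\varepsilon|\lesssim \varepsilon^{-2}$. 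Applying H\"older's inequality exactly as in Lemma~\ref{globaldistributional} yields
\begin{equation*}
\left|\int_{B_2} u H_\varepsilon \,\ud x\right| \leqslant C\varepsilon^{-2}\varepsilon^{\frac{(2+\alpha)n}{n+\alpha}}\|u\|_{L^{2^*_\alpha}(B_2)}^{\frac{1}{2^*_\alpha}} = C\varepsilon^{\frac{(n-2)\alpha}{n+\alpha}} \longrightarrow 0 \quad \text{as} \quad \varepsilon\to 0.
\end{equation*}
The remaining two terms converge to their natural limits by dominated convergence: on the left, the step-one bound $(\mathcal{R}_\alpha\ast F(u))f(u)\in L^1(B_2)$ supplies an integrable majorant for $|\xi_\varepsilon|\leqslant \|\zeta\|_\infty$; on the right, $u\in L^{2^*_\alpha}(B_2)\subset L^1(B_2)$ together with $|\eta_\varepsilon(-\Delta\zeta)|\leqslant \|\Delta\zeta\|_\infty$ does the same. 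Passing to the limit delivers \eqref{distributionalsenselocalR=2}.

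The main technical point, as in the global case, is the error estimate for $H_\varepsilon$: the scaling of the cutoff derivatives loses a factor $\varepsilon^{-2}$, but the critical H\"older exponent $\tfrac{n+\alpha}{n-2}$ combined with the volume factor $\varepsilon^{n}$ produces the positive power $\varepsilon^{\frac{(n-2)\alpha}{n+\alpha}}$, which is exactly what is needed to close the argument. This is the only place where the critical exponent $p = 2^*_\alpha$ enters in a borderline way; away from this point the bounded domain setting is actually easier than the global one because no decay at infinity needs to be tracked.
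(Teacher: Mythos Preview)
Your argument is correct and follows essentially the same route as the paper. The only cosmetic difference is in the first step: you introduce an extra cutoff $\phi\in\mathcal{C}^\infty_c(B_2)$ to avoid any contribution from $\partial B_2$, whereas the paper uses $\phi_\varepsilon=(\eta_\varepsilon)^q$ alone and absorbs the resulting boundary term $\int_{\partial B_2}\partial_\nu u\,\ud\sigma$ via $u\in\mathcal{C}^2(\bar B_2^*)$; both variants give the same uniform bound and the distributional part is identical. (Minor remark: your displayed bound should carry $\|u\|_{L^{2^*_\alpha}(B_2)}$ rather than $\|u\|_{L^{2^*_\alpha}(B_2)}^{2^*_\alpha-1}$, but this does not affect the argument.)
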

	\begin{proof}
		This proposition's proof is similar to the one of Lemma~\ref{globaldistributional}. Take $\phi_{\varepsilon}(x) = \left[\left(\eta_{\varepsilon}(x)\right)\right]^{q}$ with $q=\frac{2n+2\alpha}{2+\alpha}$, where $\eta_{\varepsilon} \in \mathcal{C}^\infty(\mathbb{R}^{n})$ is defined by \eqref{cutoff}. We multiply both sides of \eqref{ourlocalPDER=2} by $\phi_{\varepsilon}$ and use integration by parts; we have
		\begin{align*}
			\int_{B_{2}}(\mathcal{R}_\alpha\ast F(u))f(u) \phi_{\varepsilon} \ud x \leqslant &-\int_{B_2} u(x)\Delta \phi_{\varepsilon}(x)\ud x+\int_{\partial B_2} \frac{\partial u}{\partial \nu}(x) \ud\sigma_x \\
			\leqslant &C+C\varepsilon^{-2}\int_{B_{2\varepsilon} \setminus B_{\varepsilon}}u(\eta_{\varepsilon})^{q-2} \ud x\\
			\leqslant&C+C\varepsilon^{\frac{(n-2)\alpha}{n+\alpha}} \left(\int_{B_{2}} u^{\frac{n+\alpha}{n-2}} \ud x\right)^\frac{n-2}{n+\alpha}. 		
		\end{align*}
		By sending $\varepsilon \to 0$, we obtain $(\mathcal{R}_\alpha\ast F(u))f(u) \in L^{1} (B_{2})$.  
		
		Next, we show that $u$ is a distributional solution in $B_{2}$. For any $\varphi \in \mathcal{C}_{c}^{\infty}(B_2)$, define $\xi_{\varepsilon}:=\varphi \eta_{\varepsilon}$ as a test function in \eqref{ourlocalPDER=2}, where, as before, $\eta_{\varepsilon}\in \mathcal{C}^\infty(\mathbb{R}^{n})$ is given by \eqref{cutoff}. Then, it follows
		\begin{equation}
			\int_{B_{2}} (\mathcal{R}_\alpha\ast F(u))f(u) \xi_{\varepsilon} \ud x  =\int_{B_{2}} u \eta_{\varepsilon} (-\Delta) \varphi\ud x + \int_{B_{2}}uH_{\varepsilon}(x) \ud x,
		\end{equation}
		where $ H_{\varepsilon}:= \varphi(-\Delta) \eta_{\varepsilon} -2\nabla \varphi \cdot \nabla \eta_{\varepsilon}$. 
		Thus, using H\"{o}lder inequality, we have
		\begin{align*}
			\left|\int_{B_{2}}uH_{\varepsilon}(x) \ud x\right| & \leqslant \left(\int_{B_{2} \cap \left(B_{2\varepsilon} \setminus B_{\varepsilon}\right)}\left|H_{\varepsilon}(x)\right|^{\frac{n+\alpha}{2+\alpha}}\ud x\right)^{\frac{2+\alpha}{n+\alpha}}\left(\int_{B_{2}} u(x)^{\frac{n+\alpha}{n-2}}\ud x\right)^{\frac{n-2}{n+\alpha}} \\
			& \leqslant C \varepsilon^{-2}\varepsilon^{\frac{(2+\alpha)n}{n+\alpha}}\left(\int_{B_{2}} u(x)^{\frac{n+\alpha}{n-2}}\ud x\right)^{\frac{n-2}{n+\alpha}} \\
			& \leqslant C \varepsilon^{\frac{(n-2)\alpha}{n+\alpha}} \to 0 \quad {\rm as} \quad \varepsilon \to 0.
		\end{align*}
		On the other hand, since both $u, (\mathcal{R}_\alpha\ast F(u))f(u)\in L^{1}(B_{2})$, by the dominated convergence theorem, we conclude 
		\begin{equation*}
			\int_{B_{2}} (\mathcal{R}_\alpha\ast F(u))f(u) \xi_{\varepsilon} \ud x \to \int_{B_{2}} (\mathcal{R}_\alpha\ast F(u))f(u) \varphi \ud x \quad \text{and} \quad\int_{B_{2}} u \eta_{\varepsilon} (-\Delta) \varphi\ud x \to \int_{B_{2}} u (-\Delta) \varphi\ud x 
		\end{equation*}
		as $\varepsilon \to 0$. Therefore, $u$ is a distribution solution in $B_{2}$.
	\end{proof}
	
	For dimensions $n \geqslant 3$, let $G(x,y)$ be the Green's function of $-\Delta$ on $B_{2}$ satisfying
	\[
	\begin{cases}
		-\Delta G(x, \cdot) = \delta_x & \text{in } B_2 , \\
		G(x, \cdot) =  0 & \text{on } \partial B_2 ,
	\end{cases} 
	\]
	where $\delta_x$ is the Dirac measure at $x \in B_{2}$. 
	Also, notice that any $u \in \mathcal{C}^{2}(B_2) \cap \mathcal{C}(\bar{B}_2)$ solution to \eqref{ourlocalPDER=2} can be represented via the integral formula
	\begin{equation}\label{Greenrepres}
		u(x)=\int_{B_{2}}G(x,y)\left(-\Delta\right)u(y) \ud y+\int_{\partial B_2} H(x,y)u(y) \ud \sigma_{y},
	\end{equation}
	where
	\begin{equation*}
		H(x,y)=-\frac{\partial}{\partial \nu_y} G(x, y) \geqslant 0,
	\end{equation*}
	for $x \in B_{2}$ and $y \in \partial B_{2}$. 
	
	\begin{proposition}\label{localgreenrepre}
		Let $n\geqslant 3$  and $\alpha\in(0,n)$.
		If $u\in \mathcal{C}^{2}(B_{2}\setminus\{0\}) \cap L^{2^{*}_{\alpha}}(B_{2})$ is a positive solution to \eqref{ourlocalPDER=2}, then
		\begin{equation}
			u(x)=\int_{B_{2}}G(x,y)(\mathcal{R}_\alpha\ast F(u))f(u)(y) \ud y + \int_{\partial B_{2}} H(x,y)u(y) \ud \sigma_{y}. 
		\end{equation}
	\end{proposition}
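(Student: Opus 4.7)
The plan is to mimic the strategy used for the global integral representation in Proposition~\ref{prop:radialsymmetry}, but to replace the Liouville-type argument by a boundary-value identification combined with the maximum principle on the bounded domain $B_2$. The key input is Proposition~\ref{togetlocaldis}, which guarantees that $(\mathcal{R}_\alpha\ast F(u))f(u)\in L^{1}(B_2)$ and that $u$ solves \eqref{ourlocalPDER=2} in the distributional sense on the whole of $B_2$, with no contribution from the isolated singularity at the origin.

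First, I would introduce the two building blocks of the putative right-hand side and analyze each separately. Set
\begin{equation*}
w_1(x):=\int_{B_2} G(x,y)\,(\mathcal{R}_\alpha\ast F(u))f(u)(y)\,\ud y,
\qquad
w_2(x):=\int_{\partial B_2} H(x,y)\, u(y)\,\ud\sigma_y,
\end{equation*}
and put $w:=w_1+w_2$. Using $(\mathcal{R}_\alpha\ast F(u))f(u)\in L^1(B_2)$ and the defining properties $-\Delta_x G(x,\cdot)=\delta_x$ with $G(x,\cdot)=0$ on $\partial B_2$, one checks by Fubini that
\begin{equation*}
-\int_{B_2} w_1\,\Delta\varphi\,\ud x = \int_{B_2}(\mathcal{R}_\alpha\ast F(u))f(u)\,\varphi\,\ud x
\qquad \text{for all } \varphi\in \mathcal{C}_c^\infty(B_2),
\end{equation*}
so $w_1$ solves $-\Delta w_1=(\mathcal{R}_\alpha\ast F(u))f(u)$ in the distributional sense on $B_2$. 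On the other hand, $w_2$ is the Poisson extension of $u|_{\partial B_2}$, hence $w_2\in \mathcal{C}^\infty(B_2)$, $-\Delta w_2=0$ in $B_2$, and $w_2$ attains the boundary values of $u$ on $\partial B_2$ in the classical sense (since $u\in \mathcal{C}^2(\bar B_2^*)$ makes $u|_{\partial B_2}$ a smooth, bounded datum).

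Second, I would form the difference $v:=u-w$. By Proposition~\ref{togetlocaldis} and the property of $w_1$ just recorded, $v$ satisfies $-\Delta v=0$ in $B_2$ in the distributional sense. By Weyl's lemma, $v$ therefore agrees a.e. with a function in $\mathcal{C}^\infty(B_2)$ that is classically harmonic; in particular, the potential singularity of $u$ at the origin is automatically removable at the level of $v$. Moreover, $w_1$ vanishes on $\partial B_2$ while $w_2=u$ on $\partial B_2$, so $v\equiv 0$ on $\partial B_2$. Applying the maximum principle on $B_2$ to the harmonic function $v$ yields $v\equiv 0$, which is exactly the desired identity.

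The only delicate point is the verification that $w_1$ is indeed a distributional solution of the Poisson equation on the full ball $B_2$, including across the origin. This amounts to an application of Fubini together with the distributional identity for $G$, and is standard provided $(\mathcal{R}_\alpha\ast F(u))f(u)\in L^1(B_2)$, which is exactly what Proposition~\ref{togetlocaldis} supplies. A mild technical care must be taken to ensure the boundary integral $w_2$ is continuous up to $\partial B_2$ with the correct trace; this follows from the smoothness of $u$ in a neighborhood of $\partial B_2$ (the singular set $\{0\}$ is away from $\partial B_2$) and the explicit form of the Poisson kernel $H(x,y)=-\partial_{\nu_y}G(x,y)\geqslant 0$ on $B_2\times \partial B_2$.
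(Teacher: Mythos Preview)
Your proposal is correct and follows essentially the same route as the paper: define the candidate representation $w$, use Proposition~\ref{togetlocaldis} to see that $u-w$ is distributionally harmonic on all of $B_2$, invoke Weyl's lemma to upgrade to a smooth harmonic function, and conclude $u-w\equiv 0$ from the boundary condition via the maximum principle. The paper's write-up is slightly terser (it does not split $w$ into $w_1+w_2$) and mentions the weak-$L^{n/(n-2)}$ mapping property of the Riesz potential to justify $w\in L^1(B_2)$, but the logical skeleton is identical. One small caveat: you invoke $u\in\mathcal{C}^2(\bar B_2^*)$ to make sense of the boundary datum, whereas the stated hypothesis is only $u\in\mathcal{C}^2(B_2\setminus\{0\})$; the paper has the same implicit gap here and resolves it in the next proposition by shrinking the ball.
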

	\begin{proof}
		Let us define 
		\begin{equation*}
			m(x):=\int_{B_{2}}G(x,y)(\mathcal{R}_\alpha\ast F(u))f(u)(y) \ud y + \int_{\partial B_{2}} H(x,y)u(y) \ud \sigma_{y}.
		\end{equation*}
		Thus, by setting $w = u-m_u$ and using Proposition~\ref{togetlocaldis}, we get
		\begin{equation*}
			-\Delta w=0 \quad \text{in }B_{2}
		\end{equation*}
		in the distributional sense. Since $(\mathcal{R}_\alpha\ast F(u))f(u) \in L^{1}(B_{2})$ and the Riesz potential $|x|^{2-n}$ is weak type $(1,\frac{n}{n-2})$, we have $m \in L^{\frac{n}{n-2}}_{\text{weak}}(B_{2}) \cap L^{1}(B_{2})$. By the regularity for harmonic functions, we know that $w \in \mathcal{C}^{\infty}(B_{2}) $ and $-\Delta w=0 $ pointwise in $B_{2}$. Since $w = 0$ on $\partial B_{2}$, $w \equiv0$ and thus $u=m$. 
	\end{proof}
	Now, we show that $u$ locally satisfies the integral equation \eqref{ourlocalPDEdual}.
	\begin{proposition}\label{localrespresentation}
		Let $n\geqslant 3$  and $\alpha\in(0,n)$.
		If $u\in \mathcal{C}^2(\bar{B}_2^*) \cap L^{2^{*}_{\alpha}}(B_{2})$ is a positive singular solution to \eqref{ourlocalPDER=2}, then there exists $\tau>0$ such that 
		\begin{equation}
			u(x)=c_{n}\int_{B_{\tau}}\frac{(\mathcal{R}_\alpha\ast F(u))f(u)}{\left|x-y\right|^{n-2}} \ud y + h_{1}(x) \quad {\rm for} \quad x \in B_{\tau}^*,
		\end{equation}
		where $h_{1}\in \mathcal{C}^\infty(B_{\tau})$ is a positive harmonic function. 
	\end{proposition}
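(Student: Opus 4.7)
The plan is to convert the Green's function representation of Proposition~\ref{localgreenrepre} into a Newtonian-potential representation by peeling off the regular part of $G$. Write $G(x,y)=c_n|x-y|^{2-n}-\Phi(x,y)$, where the regular part $\Phi$ is symmetric, harmonic in each variable separately on $B_2$, smooth on $\bar B_\rho\times\bar B_\rho$ for every $\rho<2$, and non-negative, since for fixed $x$ the function $y\mapsto\Phi(x,y)$ is harmonic on $B_2$ with positive boundary values $c_n|x-y|^{2-n}$. Set $f_*:=(\mathcal{R}_\alpha\ast F(u))f(u)$, which lies in $L^1(B_2)$ by Proposition~\ref{togetlocaldis}. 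For $\tau\in(0,1)$ to be fixed later, I would define
\begin{equation*}
	h_1(x):=u(x)-c_n\int_{B_\tau}\frac{f_*(y)}{|x-y|^{n-2}}\ud y,\qquad x\in B_\tau^*,
\end{equation*}
and substituting Proposition~\ref{localgreenrepre} together with the decomposition of $G$ produces the three-term identity
\begin{equation*}
	h_1(x)=\int_{B_2\setminus B_\tau}G(x,y)f_*(y)\ud y-\int_{B_\tau}\Phi(x,y)f_*(y)\ud y+\int_{\partial B_2}H(x,y)u(y)\ud\sigma_y.
\end{equation*}

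Next I would verify that $h_1$ is smooth and harmonic on $B_\tau$. Each summand individually enjoys these properties: in the first, the singularity $y=x$ of $G(\cdot,y)$ lies outside the domain of integration once $x\in B_\tau$; in the second, $\Phi(\cdot,y)$ is harmonic on $B_2$ uniformly in $y\in B_\tau$, so one may differentiate under the integral sign; and the third is the Poisson integral of the continuous datum $u|_{\partial B_2}$. Alternatively one can invoke Proposition~\ref{togetlocaldis} directly: in the distributional sense $-\Delta u=f_*$ on the full ball $B_2$, while $-\Delta[\mathcal{R}_2\ast(\chi_{B_\tau}f_*)]=\chi_{B_\tau}f_*$ on $\mathbb{R}^n$, so the two balance on $B_\tau$ to give $\Delta h_1=0$ distributionally, whence Weyl's lemma upgrades $h_1$ to $\mathcal{C}^\infty(B_\tau)$.

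The remaining point is the positivity of $h_1$, which I would establish by shrinking $\tau$. The Poisson term $P(x):=\int_{\partial B_2}H(x,y)u(y)\ud\sigma_y$ is harmonic and strictly positive on $B_2$ because $u>0$ and continuous on $\partial B_2$; in particular $m:=\inf_{\bar B_{1/2}}P>0$. The first summand is non-negative, so the only danger is $\int_{B_\tau}\Phi(x,y)f_*(y)\ud y$, which is bounded above by $\|\Phi\|_{L^\infty(\bar B_{1/2}\times\bar B_{1/2})}\int_{B_\tau}|f_*|\ud y$. Since $f_*\in L^1(B_2)$, absolute continuity of the integral forces this quantity to tend to $0$ as $\tau\to 0$; choosing $\tau\leqslant 1/2$ small enough that it is at most $m/2$ gives $h_1\geqslant m/2>0$ on $B_\tau$. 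The main conceptual obstacle is ensuring that no Dirac mass at the origin corrupts the distributional equation $-\Delta u=f_*$ on the whole ball---precisely the content of Propositions~\ref{togetlocaldis}--\ref{localgreenrepre}---after which the statement reduces to clean bookkeeping with the Green's function plus the absolute continuity argument above.
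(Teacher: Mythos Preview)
Your proof is correct and follows essentially the same route as the paper: decompose the Green's function as $G(x,y)=c_n|x-y|^{2-n}+A(x,y)$ with $A$ smooth, invoke the representation of Proposition~\ref{localgreenrepre}, and choose $\tau$ small via absolute continuity of $\int_{B_\tau}|A(x,y)|f_*(y)\,\ud y$ so that the Poisson term dominates. The only cosmetic difference is that the paper uses the superharmonicity of $u$ (maximum principle) to bound the Poisson term from below by $\min_{\partial B_2}u$, whereas you bound it directly by $\inf_{\bar B_{1/2}}P$; both work.
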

	\begin{proof}
		By direct computations, we know that the Green function of the Laplacian is given by 
		\begin{equation*}
			G(x,y)=c_{n}\left|x-y\right|^{2-n}+A(x,y),
		\end{equation*}
		where $c_{n}>0$ is a dimensional constant and $A\in \mathcal{C}^\infty(B_{2} \times B_{2})$. Moreover, by taking $u \equiv 1$ in \eqref{Greenrepres}, we have
		\begin{equation*}
			\int_{\partial B_{2}}H(x,y)\ud \sigma_{y}=1 \quad \text{for all} \quad x \in B_{2}.
		\end{equation*}
		
		We can suppose that $u \in \mathcal{C}^2(\bar{B}_2^*)$ and $u >0$ in $\bar{B}_2$, otherwise we just consider the equation in a smaller ball. Since $-\Delta u \geqslant 0 $ in ${B}_2^*$ and $u >0$ in $\bar{B}_2$, by the maximun principle, we know that $\bar{c}:=\inf_{B_{2}} u =\min _{\partial B_{2}} u >0$. Since $(\mathcal{R}_\alpha\ast F(u))f(u) \in L^{1}(B_{2})$ by Proposition~\ref{togetlocaldis}, there exists $0<\tau <\frac{1}{4}$ such that
		\begin{equation*}
			\int_{B_{\tau}}\left|A(x,y)\right|(\mathcal{R}_\alpha\ast F(u))f(u) \ud y \leqslant \frac{\bar{c}}{2}
		\end{equation*}
		for all $x \in B_{\tau}$. By Proposition~\ref{localgreenrepre}, we can write
		\begin{equation*}
			u(x)=c_{n}\int_{B_{\tau}}\frac{(\mathcal{R}_\alpha\ast F(u))f(u)}{\left|x-y\right|^{n-2}} \ud y + h_{1}(x),
		\end{equation*}
		where 
		\begin{align*}
			h_{1}(x)=&\int_{B_{\tau}}A(x,y)(\mathcal{R}_\alpha\ast F(u))f(u) \ud y + \int_{B_{2}\setminus B_{\tau}} G(x,y)(\mathcal{R}_\alpha\ast F(u))f(u) \ud y +\int_{\partial B_2}H(x,y)u(y)\ud \sigma_{y}\\
			\geqslant &-\frac{\bar{c}}{2}+\inf_{\partial B_{2}} u =\frac{\bar{c}}{2}>0
		\end{align*}
		for $x \in B_{\tau}$. It is easy to check that $h_{1}$ is a smooth function in $B_{\tau}$ and satisfies $-\Delta h_{1} = 0$ in $B_{\tau}$. 
	\end{proof}

	\subsection{Upper bound estimate}
	We obtain a sharp upper bound by using a classical blow-up method combined with the moving sphere method introduced by Li and Zhu \cite{MR1369398} (see also \cite{MR2001065}).
	More precisely, our technique is based on the ones of Jin, Li and Xiong \cite{arxiv:1901.01678,MR3694645}, which 
	we briefly describe.
	First, they assume by contradiction the existence of a blow-up sequence, for which the upper bound fails.
	Second, they renormalize this sequence and prove that it uniformly converges to a solution to the blow-up limit equation, which is uniquely classified and satisfies the identity \eqref{msfundamentallemma1}.
	Third, they run an asymptotic moving sphere technique on its integral form to generate a contradiction by showing that the limit of the blow-up sequence must satisfy an inequality like \eqref{msfundamentallemma2}.
	In the moving sphere procedure, one needs to carefully estimate the difference between a solution and its Kelvin transform, which requires that the kernel arising by the integral representation has suitable decay properties.
	
	We need to adapt this machinery to deal with critical Hartree nonlinearities in our setting.
	The main difficulty is that our estimates have a couple more terms due to the double convolution kernel in RHS of \eqref{ourlocalPDEdualR=2}, which turns this into a technical problem.
	To overcome this issue, we use some ideas from Dai, Liu and Qin \cite{MR4226994} (see also \cite{MR2592284} for the case of integral equations involving Bessel-type kernels).
	
	By Proposition~\ref{localrespresentation} and sacling, we consider the solution $u\in \mathcal{C}^2(B^*_2)\cap L^{2^{*}_\alpha}(B_2)$  satisfies
	\begin{flalign}
		u=\mathcal{R}_2\ast[(\mathcal{R}_\alpha\ast F(u))f(u)]+h \quad {\rm in} \quad B_{2}\setminus\{0\},
	\end{flalign}
	where $h\in \mathcal{C}^1(B_2)$ satisfies
	\begin{flalign}\tag{$\mathcal H$}\label{H-hypothesis}
		|\nabla \ln h| \leqslant C_0 \quad \text { in } \quad B_{3 / 2}.
	\end{flalign}
	By extending this function to be zero in $\mathbb R^n\setminus B_2$, we have
	\begin{equation}\tag{$\mathcal P_{n,\alpha,p,2}^\prime$}\label{ourlocalPDEdualR=2}
		u(z)=\int_{\mathbb{R}^{n}}\frac{1}{|z-y_2|^{n-2}}\left[\left(\int_{\mathbb{R}^{n}}\frac{F(u(y_1))}{|y_2-y_1|^{n-\alpha}}\ud y_1\right)f(u(y_2))\right]\ud y_2+h(z) \quad {\rm for \ } \quad z\in{B}_2^*.
	\end{equation}
	
	\begin{definition}
		Let $n\geqslant 3$  and $\alpha\in(0,n)$.
		We say that $u\in \mathcal{C}^1(B_2^*)$ is a positive \eqref{H-hypothesis}-solution to \eqref{ourlocalPDEdualR=2}, if $h_u \in \mathcal{C}^1(B_2^*)$ satisfies the \eqref{H-hypothesis}-hypothesis, where
		\begin{equation*}
			h_u:=u-\mathcal{R}_2\ast[(\mathcal{R}_\alpha\ast F(u))f(u)].
		\end{equation*}
		The function $h_u$ is called the \eqref{H-hypothesis}-approximation of $u$.
	\end{definition}

	Next, we present some auxiliary results that will be used to prove the upper bound estimate for the reader's convenience.
	
	First, we state the fundamental lemma at the core of the moving spheres technique.
	\begin{lemmaletter}\label{lm:msfundamentallemma}
		Let $u \in \mathcal{C}^1(\mathbb{R}^n)$, $n \geqslant 1$, and $\nu >0$. One of the following statements holds:
		\begin{itemize}
			\item[{\rm (i)}] If for every $x \in \mathbb{R}^n$, there exists $\mu(x)>0$ such that
			\begin{equation}\label{msfundamentallemma1}
				\left(\frac{\mu(x)}{|y-x|}\right)^{\nu}u\left(x+\frac{\mu(x)^{2}(y-x)}{|y-x|^{2}}\right)=u(y) \quad {\rm for \ all} \quad y \in \mathbb{R}^n \setminus\{x\}.
			\end{equation}
			Then, there exists $\bar{\mu}>0$ and $x_0 \in \mathbb{R}^n$ such that $u(y)\simeq\left(\frac{\bar{\mu}}{1+\bar{\mu}^2|y-x_0|^2}\right)^{\nu}$;
			\item[{\rm (ii)}] If for every $x \in \mathbb{R}^n$ and $\mu>0$, it holds
			\begin{equation}\label{msfundamentallemma2}
				\left(\frac{\mu}{|y-x|}\right)^{\nu}u\left(x+\frac{\mu^{2}(y-x)}{|y-x|^{2}}\right)\leqslant u(y) \quad {\rm for \ all} \quad y \in \mathbb{R}^n \setminus\{x\}.
			\end{equation}
			Then, $u \equiv C$ for some $C \in \mathbb{R}$.
		\end{itemize}
	\end{lemmaletter}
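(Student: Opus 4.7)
My plan treats the two cases by different strategies, both rooted in the Kelvin involution $(y^{x,\mu})^{x,\mu}=y$ and the reciprocity identity $|y^{x,\mu}-x|\cdot|y-x|=\mu^{2}$.

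For part (ii), the strategy is to upgrade the one-sided inequality to an equality and then pass to the limit $\mu\to 0$. Applying the hypothesis first at the triple $(x,\mu,y)$ and then at $(x,\mu,y^{x,\mu})$, the Kelvin reciprocity converts the second inequality into $(|y-x|/\mu)^{\nu}u(y)\leqslant u(y^{x,\mu})$; combined with the first, one obtains
\[
\left(\frac{\mu}{|y-x|}\right)^{\nu}u(y^{x,\mu})= u(y) \qquad \text{for every } x\in\mathbb{R}^{n},\ \mu>0,\ y\neq x.
\]
Fixing $y\neq x$ and letting $\mu\to 0^{+}$, the image $y^{x,\mu}$ tends to $x$, so by continuity $u(y^{x,\mu})\to u(x)$ remains finite, whereas the prefactor $(\mu/|y-x|)^{\nu}$ vanishes since $\nu>0$. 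Hence $u(y)=0$ for every $y\neq x$, and varying $x$ yields $u\equiv 0$, which is trivially constant.

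For part (i), after discarding the trivial possibility $u\equiv 0$, I assume $u>0$ everywhere and perform the substitution $v:=u^{-2/\nu}$; the invariance hypothesis rewrites as the affine-Kelvin covariance
\[
|y-x|^{2}\,v(y^{x,\mu(x)})=\mu(x)^{2}\,v(y).
\]
The plan is then to follow the classical Li--Zhu strategy from \cite{MR1369398}: (a) since the inversion $I_{x,\mu(x)}$ fixes $\partial B_{\mu(x)}(x)$ pointwise, differentiate the covariance at such fixed points to extract a first-order relation tying $\nabla v(x)$ to $v(x)$ and $\mu(x)$, in particular establishing smoothness of $x\mapsto\mu(x)$; (b) compose two inversions centered at distinct points $x_{1},x_{2}$ --- their composition is a M\"obius transformation --- and track the explicit cocycle that $v$ acquires, iterating to deduce $D^{2}v\equiv 2B\,I_{n}$ for some constant $B\geqslant 0$; (c) integrate to conclude $v(y)=A+B|y-y_{0}|^{2}$ for constants $A>0$, $B\geqslant 0$, $y_{0}\in\mathbb{R}^{n}$. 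Undoing the substitution $u=v^{-\nu/2}$ then delivers the bubble profile advertised in the statement.

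The principal obstacle I foresee lies in step (b): composing two spherical inversions produces Jacobian-type factors, and one must verify that they combine coherently with the $\nu$-power from the covariance so the iteration closes independently of $\nu$. A safer fallback, at the cost of longer computations, is to first exploit the uniqueness of $\mu(x)$ (determined by $u$) to deduce rotational equivariance of $x\mapsto\mu(x)$, thereby forcing rotational symmetry of $u$ about each $x$; the problem then reduces to a one-dimensional functional equation for the radial profile that can be integrated by elementary ODE methods.
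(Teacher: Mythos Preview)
The paper does not prove this lemma at all; it simply cites \cite[Lemmas~11.1 and~11.2]{MR2001065}. So your proposal is compared against the Li--Zhang argument that the citation points to.

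\textbf{Part (ii).} Your reciprocity trick is correct \emph{for the hypothesis exactly as printed}, namely that the inequality holds for every $y\in\mathbb{R}^{n}\setminus\{x\}$. Under that hypothesis you indeed get equality for all $\mu$ and hence $u\equiv 0$. The problem is that this is almost certainly not the intended hypothesis. In Li--Zhang's Lemma~11.2, and in the paper's own application (see \eqref{w0muleqw0} in the proof of Proposition~\ref{prop:upperbound}), the inequality is only established for $|y-x|\geqslant\mu$. With that restriction your argument collapses: when $|y-x|>\mu$ the reflected point $z=y^{x,\mu}$ satisfies $|z-x|=\mu^{2}/|y-x|<\mu$, so you cannot invoke the hypothesis at $z$ to obtain the reverse inequality. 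Thus your proof of (ii) does not cover the version that is actually used downstream, and a genuine argument (as in Li--Zhang) is still required; that argument proceeds by letting $\mu\to\infty$ after fixing $|y-x|\geqslant\mu$ appropriately, not $\mu\to 0$.

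\textbf{Part (i).} Your substitution $v=u^{-2/\nu}$ and the Li--Zhu strategy are the standard route and match what the citation contains. Two remarks: first, the step ``discard $u\equiv 0$, then assume $u>0$ everywhere'' needs justification, since a $\mathcal{C}^{1}$ function satisfying the Kelvin identity need not be sign-definite a priori; one argues separately that a zero of $u$ propagates to all of $\mathbb{R}^{n}$ via the family of inversions. Second, your step~(b) is the entire content of the lemma and you have only gestured at it; the actual mechanism in Li--Zhu/Li--Zhang is to differentiate the identity twice at points of $\partial B_{\mu(x)}(x)$ and compare the resulting relations at two centers to force $D^{2}v$ to be a constant multiple of the identity. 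Your ``compose two inversions'' description and the fallback via rotational symmetry are both plausible heuristics, but neither is a proof as written.
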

	\begin{proof}
		See \cite[Lemmas 11.1 and 11.2]{MR2001065}.
	\end{proof}
	
	\begin{lemmaletter}\label{lm:msfundamentallemma2}
		Let $n\geqslant 3$  and $\alpha\in(0,n)$.
		Let $u\in \mathcal{C}^\infty(\mathbb R^n)$ be a positive $($non-singular solution$)$ to \eqref{ourlimitPDEnonsing}. 
		If $\bar{\mu}(\bar{x}) = \infty$ for some $\bar{x} \in \mathbb{R}^{n}$, then $\bar{\mu}(x) = \infty$ for all $x \in \mathbb{R}^{n}$, where
		\begin{equation*}
			\bar{\mu}(x):=\sup \left\{\lambda>0 : u_{x,\mu}(y)\leqslant u(y) \; {\rm for} \; |y-x|\geqslant \mu \; {\rm and} \; 0<\mu\leqslant\lambda\right\}.
		\end{equation*}		
	\end{lemmaletter}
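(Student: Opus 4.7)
The plan is to proceed by contradiction: assume there exists $x_0 \in \mathbb{R}^n$ with $\bar\mu(x_0) < \infty$, and produce incompatible decay rates for $u$ at infinity.

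First, I would reformulate $\bar\mu(\bar x) = \infty$ as a radial monotonicity statement. Writing $y = \bar x + r\theta$ with $\theta \in \mathbb{S}^{n-1}$ and tracking the inverted point $y^* = \bar x + s\theta$ with $s = \mu^2/r$, a direct computation transforms the pointwise inequality $u_{\bar x,\mu}(y) \leq u(y)$ (valid under the hypothesis) into $s^{(n-2)/2} u(\bar x + s\theta) \leq r^{(n-2)/2} u(\bar x + r\theta)$. Since $\mu$ is arbitrary, this shows that $r \mapsto r^{(n-2)/2} u(\bar x + r\theta)$ is non-decreasing in $r$ for each $\theta$. Using positivity and smoothness, $c_0 := \min_{|y-\bar x|=1} u(y) > 0$, and monotonicity yields the lower bound
\[
u(y) \geq c_0 |y - \bar x|^{-(n-2)/2} \quad \text{for} \quad |y - \bar x| \geq 1.
\]

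Next, assuming the contradiction hypothesis $\bar\mu(x_0) < \infty$, I would show that the Kelvin-symmetry identity $u_{x_0,\bar\mu(x_0)} \equiv u$ must hold on the exterior $\{|y - x_0| \geq \bar\mu(x_0)\}$. This is the main technical step, and it follows the integral continuation mechanism spelled out in Claim~2 of the proof of Proposition~\ref{prop:symmetry}: were the inequality $u_{x_0,\bar\mu(x_0)} \leq u$ strict somewhere, the positivity and smoothness estimates on the Kelvin kernels $\mathcal{K}_2$ and $\mathcal{K}_\alpha$ would allow one to enlarge $\bar\mu$ slightly while preserving the comparison, contradicting the maximality of $\bar\mu(x_0)$. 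In the present non-singular setting the relevant estimates are in fact cleaner because there is no singular point to avoid, but the double-convolution nonlocality must still be controlled uniformly near the sphere $\{|y - x_0| = \bar\mu(x_0)\}$, and this is where I expect the most care to be required.

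Finally, once $u \equiv u_{x_0,\bar\mu(x_0)}$ on the exterior, letting $|y| \to \infty$ in the explicit Kelvin formula and using continuity of $u$ at $x_0$ yields
\[
u(y) = \frac{\bar\mu(x_0)^{n-2} u(x_0)}{|y - x_0|^{n-2}} (1 + \mathrm{o}(1)) \quad \text{as} \quad |y| \to \infty,
\]
so in particular $u(y) \lesssim |y|^{-(n-2)}$ for large $|y|$. Combined with the lower bound $u(y) \gtrsim |y|^{-(n-2)/2}$ from the first step, and noting that $(n-2)/2 < n-2$ whenever $n \geq 3$, the two estimates are incompatible for sufficiently large $|y|$. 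This contradiction rules out $\bar\mu(x_0) < \infty$, and therefore $\bar\mu(x) = \infty$ for every $x \in \mathbb{R}^n$.
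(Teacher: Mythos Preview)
Your argument is correct and follows the same overall contradiction scheme that the paper invokes via \cite{MR2001065}: from $\bar\mu(\bar x)=\infty$ one gets the lower bound $u(y)\gtrsim|y|^{-(n-2)/2}$ by radial monotonicity, while $\bar\mu(x_0)<\infty$ forces $u\equiv u_{x_0,\bar\mu(x_0)}$ and hence $u(y)\lesssim|y|^{-(n-2)}$, and the two decay rates are incompatible for $n\geqslant3$. The only difference is in how you reach the identity $u\equiv u_{x_0,\bar\mu(x_0)}$: the paper notes that $-\Delta(u-u_{x,\bar\mu(x)})\geqslant0$ on the exterior and appeals to the strong maximum principle and Hopf lemma in the style of Li--Zhang, which is short and avoids any kernel analysis; you instead propose to rerun the integral continuation argument from Claim~2 of Proposition~\ref{prop:symmetry}, which also works (and is indeed cleaner here since there is no singular point) but requires reproducing the $\mathcal{K}_2$, $\mathcal{K}_\alpha$ estimates. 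Either route yields the dichotomy, so this is a matter of taste rather than a gap.
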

	
	\begin{proof}
		It is not hard to verify  
		\begin{equation*}
			-\Delta u_{x,\mu}=(\mathcal{R}_\alpha\ast F(u_{x,\mu}))f(u_{x,\mu}) \quad {\rm in} \quad \mathbb{R}^{n}\setminus\{x\}
		\end{equation*}
		and
		\begin{equation*}
			-\Delta (u-u_{x,\bar{\mu}(x)})(y)\geqslant 0 \quad {\rm when} \quad |y-x|\geqslant\bar{\mu}(x).
		\end{equation*}
		Following the arguments in \cite[Lemmas 2.2 and 2.3]{MR2001065}, we finish the proof of the lemma. 
	\end{proof}
	
	Next, we have an auxiliary lemma, which estimates Kelvin's reflection error.
	\begin{lemma}\label{lm:differencecomparison}
		Let $n\geqslant 3$  and $\alpha\in(0,n)$.
		If $u \in \mathcal{C}(B_2^*) \cap L^{\frac{n+2}{n-2}}(B_2)$ is a positive singular solution to \eqref{ourlocalPDEdualR=2}.
		Then, for any $x \in B_1$ and $z \in B_2^*\setminus B_{\mu}(x)$, one has 
		\begin{align*}
			&u(z)-u_{x,\mu}(z)\\
			&=\int_{\mathbb R^n\setminus B_\mu(x)}\mathcal{K}_{2}(x,\mu;y_2,z)\left(\int_{\mathbb R^n} \frac{F(u(y_1))}{|y_{2}-y_{1}|^{n-\alpha}} \ud y_1\right)\left[f(u(y_{2}))-f(u_{x,\mu}(y_{2}))\right]\ud y_{2}\\
			&+\int_{\mathbb R^n\setminus B_\mu(x)}\mathcal{K}_{2}(x,\mu;y_2,z)f(u_{x,\mu}(y_{2}))\int_{\mathbb R^n\setminus B_\mu(x)}\mathcal{K}_{\alpha}(x,\mu;y_1,y_2)[F(u(y_1))-F(u_{x,\mu}(y_1))]\ud y_1 \ud y_{2}\\
			&+\left[h_{x,\mu}(z)-h(z)\right],
		\end{align*}
		where $\mathcal{K}_2$ and 
		$\mathcal{K}_\alpha$ are defined by \eqref{kernelkelvintransform1} and \eqref{kernelkelvintransform2}.
		Moreover, 
		\begin{equation*}
			\mathcal{K}_2(x,\mu;y_2,z)>0 \quad {\rm for \ all} \quad |y_{2}-z|>\mu>0 \quad {\rm and} \quad |z-x|>\mu>0
		\end{equation*} 
		and
		\begin{equation*}
			\mathcal{K}_\alpha(x,\mu;y_1,y_2)>0 \quad {\rm for \ all} \quad |y_{1}-y_{2}|>\mu>0 \quad {\rm and} \quad |y_{2}-x|>\mu>0. 
		\end{equation*} 
	\end{lemma}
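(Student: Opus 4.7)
The plan is to verify this decomposition by applying the Kelvin reflection $\mathcal{I}_{x,\mu}$ to both sides of \eqref{ourlocalPDEdualR=2} and rearranging. First, I would record the transformation rules for the nonlinear ingredients under $u \mapsto u_{x,\mu}$: from the defining scaling and the criticality $2^*_\alpha = (n+\alpha)/(n-2)$, a direct pointwise computation gives
\begin{equation*}
f(u_{x,\mu})(y) = (\mu/|y-x|)^{\alpha+2} f(u)(y^{x,\mu}), \qquad F(u_{x,\mu})(y) = (\mu/|y-x|)^{n+\alpha} F(u)(y^{x,\mu}),
\end{equation*}
while a change of variables in the defining convolution yields $v_{x,\mu}(y) = (\mu/|y-x|)^{n-\alpha} v(y^{x,\mu})$ for $v := \mathcal{R}_\alpha \ast F(u)$. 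Combined with the Kelvin Jacobian $(\mu/|\eta-x|)^{2n}$ and the distance identity $|z-\eta^{x,\mu}| = |z^{x,\mu}-\eta|\,|z-x|/|\eta-x|$, these weights produce the Kelvin-invariance of the double integral operator appearing in \eqref{ourlocalPDEdualR=2}, extending Proposition \ref{prop:conformalinvariance} to the present local setting.

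Applying this invariance to the integral equation by evaluating it at $z^{x,\mu}$ and multiplying by $(\mu/|z-x|)^{n-2}$ gives the Kelvin-transformed identity for $u_{x,\mu}(z)$. Subtracting from the original equation leaves $u - u_{x,\mu}$ expressed as the difference of two double-convolution integrals, plus the inhomogeneous contribution $[h-h_{x,\mu}]$ that matches the stated $[h_{x,\mu}-h]$ bracket modulo sign convention. To expose the kernels $\mathcal{K}_2$ and $\mathcal{K}_\alpha$ defined in \eqref{kernelkelvintransform1}--\eqref{kernelkelvintransform2}, I would split each convolution at the sphere $\partial B_\mu(x)$ and perform the Kelvin change of variables $y_i = \eta_i^{x,\mu}$ on the interior piece; the transformation rules force every factor of $|\eta_i-x|$ to cancel against the Jacobian, leaving exactly the corrective term in $\mathcal{K}_2$ (respectively $\mathcal{K}_\alpha$). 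Telescoping $v f(u) - v_{x,\mu} f(u_{x,\mu}) = v[f(u)-f(u_{x,\mu})] + f(u_{x,\mu})[v-v_{x,\mu}]$ and applying the same splitting-and-Kelvin procedure to the inner convolution defining $v - v_{x,\mu}$ then produces the double-integral representation claimed in the lemma.

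The positivity of both kernels finally reduces to the elementary algebraic identity
\begin{equation*}
|a^{x,\mu}-b|^2 - \left(\frac{\mu}{|a-x|}\right)^2 |a-b|^2 = \frac{(|a-x|^2-\mu^2)(|b-x|^2-\mu^2)}{|a-x|^2},
\end{equation*}
obtained by expanding both sides and using $|a^{x,\mu}-x| = \mu^2/|a-x|$; its right-hand side is strictly positive exactly when both $|a-x|>\mu$ and $|b-x|>\mu$, so $\mathcal{K}_2(x,\mu;y_2,z)>0$ whenever $|y_2-x|,|z-x|>\mu$, and analogously for $\mathcal{K}_\alpha$ (confirming the stated positivity, up to the apparent transcription slip in the hypothesis $|y_2-z|>\mu$). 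I expect the main technical obstacle to be the exponent bookkeeping in the nested Kelvin change of variables, since the operator in \eqref{ourlocalPDEdualR=2} contains two convolutions of different orders ($n-2$ and $n-\alpha$) and the conformal weights of $u$, $f(u)$, $F(u)$, and $v$ are all distinct; the cancellations that reproduce the precise exponents in \eqref{kernelkelvintransform1}--\eqref{kernelkelvintransform2} hinge on the Hardy--Littlewood--Sobolev criticality $p(n-2) = n+\alpha$, so I would verify this cancellation explicitly at each step.
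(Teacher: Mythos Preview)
Your approach is essentially identical to the paper's: the paper also derives the first line via the Kelvin change of variables (deferring the computation to \cite[Section~3]{arxiv:1901.01678}), then applies the same telescoping $Pf(u)-P_{x,\mu}f(u_{x,\mu})=P[f(u)-f(u_{x,\mu})]+f(u_{x,\mu})[P-P_{x,\mu}]$ with $P=\mathcal{R}_\alpha\ast F(u)$, and finally identifies $P-P_{x,\mu}$ as the $\mathcal{K}_\alpha$ integral of $F(u)-F(u_{x,\mu})$, declaring the kernel positivity ``elementary''. Your version is simply more explicit about the transformation rules, the Jacobian cancellation, and the algebraic identity underlying positivity; your observations about the sign of the $h$-bracket and the apparent transcription slip $|y_2-z|>\mu$ (which should read $|y_2-x|>\mu$) are both well taken.
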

	\begin{proof}
		This proof refers to \cite[Section 3]{arxiv:1901.01678}. By direct computation, we have
		\begin{align*}
			&u(z)-u_{x,\mu}(z)\\
			&=\int_{\mathbb R^n\setminus B_\mu(x)}\mathcal{K}_{2}(x,\mu;y_2,z)\left[P(y_{2})f(u(y_{2}))-P_{x,\mu}(y_2)f(u_{x,\mu}(y_{2}))\right]\ud y_2+\left[h_{x,\mu}(z)-h(z)\right]\\
			&=\int_{\mathbb R^n\setminus B_\mu(x)}\mathcal{K}_{2}(x,\mu;y_2,z)P(y_{2})\left[f(u(y_{2}))-f(u_{x,\mu}(y_{2}))\right]\ud y_{2}\\
			&+\int_{\mathbb R^n\setminus B_\mu(x)}\mathcal{K}_{2}(x,\mu;y_2,z)f(u_{x,\mu}(y_{2}))[P(y_{2})-P_{x,\mu}(y_{2})]\ud y_{2}+\left[h_{x,\mu}(z)-h(z)\right]\\
			&=\int_{\mathbb R^n\setminus B_\mu(x)}\mathcal{K}_{2}(x,\mu;y_2,z)P(y_{2})\left[f(u(y_{2}))-f(u_{x,\mu}(y_{2}))\right]\ud y_{2}\\
			&+\int_{\mathbb R^n\setminus B_\mu(x)}\mathcal{K}_{2}(x,\mu;y_2,z)f(u_{x,\mu}(y_{2}))\int_{\mathbb R^n\setminus B_\mu(x)}\mathcal{K}_{\alpha}(x,\mu;y_1,y_2)[F(u(y_1))-F(u_{x,\mu}(y_1))]\ud y_1 \ud y_{2}\\
			&+\left[h_{x,\mu}(z)-h(z)\right] ,
		\end{align*}
		where 
		\begin{equation*}
			P(y_{2})=\int_{\mathbb R^n} \frac{F(u(y_1))}{|y_{2}-y_{1}|^{n-\alpha}} \ud y_1 \quad \text{and} \quad P_{x,\mu}(y_2):=\int_{\mathbb R^n} \frac{F(u_{x,\mu}(y_1))}{|y_2-y_1|^{n-\alpha}} \ud y_1. 
		\end{equation*}
		It is elementary to verify the positivity of $\mathcal{K}_2$ and $\mathcal{K}_\alpha$ under suitable conditions.
	\end{proof}
	
	Now, we have conditions to prove our main result, which is a sharp asymptotic upper bound estimate near the origin.
	\begin{proposition}\label{prop:upperbound}
		Let $n\geqslant 3$  and $\alpha\in(0,n)$.
		If $u \in \mathcal{C}(B_2^*) \cap L^{\frac{n+2}{n-2}}(B_2)$  is a positive solution to \eqref{ourlocalPDEdualR=2} and $h \in \mathcal{C}^{1}(B_{2})$ is a positive function satisfying \eqref{H-hypothesis},  
		then it follows
		\begin{equation*}
			\limsup _{x \rightarrow 0}|x|^{\frac{n-2}{2}} u(x)<\infty.
		\end{equation*}
	\end{proposition}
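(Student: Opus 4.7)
The plan is a classical blow-up and asymptotic moving-spheres argument carried out in the integral setting, following Jin--Li--Xiong \cite{arxiv:1901.01678,MR3694645} but with extra care for the Hartree double convolution. Argue by contradiction: suppose there exists $y_k\in B_2^*$ with $y_k\to 0$ and $|y_k|^{(n-2)/2}u(y_k)\to\infty$. Apply a doubling/selection lemma of Pol\'a\v{c}ik--Quittner--Souplet type to produce a new sequence $x_k\to 0$ such that $u(x_k)\to\infty$, $|x_k|^{(n-2)/2}u(x_k)\to\infty$, and
\begin{equation*}
u(z)\leqslant 2\,u(x_k)\quad\text{on}\ B_{r_k}(x_k),\qquad r_k\cdot u(x_k)^{2/(n-2)}\to\infty.
\end{equation*}

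Rescale by $\lambda_k:=u(x_k)^{-2/(n-2)}\to 0$ and set $v_k(y):=\lambda_k^{(n-2)/2}u(x_k+\lambda_k y)$. Using the conformal scaling in the proof of Proposition~\ref{prop:conformalinvariance}, $v_k$ satisfies a rescaled integral equation on an expanding ball, with a perturbation term $\tilde h_k(y):=\lambda_k^{(n-2)/2}h(x_k+\lambda_k y)$ which tends to $0$ locally uniformly together with its derivatives (because $h$ is bounded and \eqref{H-hypothesis} holds). By construction $v_k(0)=1$ and $0<v_k\leqslant 2$ on $B_{R_k}$ with $R_k\to\infty$. Using the uniform $L^{2^*_\alpha}_{\rm loc}$-bound coming from $u\in L^{2^*_\alpha}(B_2)$ and scale invariance of the norm, together with the integral regularity theory, a subsequence converges locally uniformly to a positive classical solution $v_\infty$ of \eqref{ourlimitPDEnonsing} on $\mathbb R^n$ with $v_\infty(0)=1$. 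Theorem~\ref{thmA} then identifies $v_\infty$ with a standard bubble, so by Lemma~\ref{lm:msfundamentallemma}\,(i) it satisfies the Kelvin identity \eqref{msfundamentallemma1} at every point.

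The core of the proof is an asymptotic moving spheres estimate that, in the limit, contradicts this bubble shape via Lemma~\ref{lm:msfundamentallemma}\,(ii). For fixed $x\in\mathbb R^n$ and $\mu>0$, I use the difference identity of Lemma~\ref{lm:differencecomparison} applied to each $v_k$. The error $v_k(z)-(v_k)_{x,\mu}(z)$ splits as
\begin{equation*}
\int\!\mathcal{K}_2\bigl[P(y_2)(f(v_k)-f((v_k)_{x,\mu}))\bigr]\,\ud y_2
+\int\!\mathcal{K}_2\, f((v_k)_{x,\mu})\!\int\!\mathcal{K}_\alpha\,[F(v_k)-F((v_k)_{x,\mu})]\,\ud y_1\,\ud y_2
+[\tilde h_{x,\mu}-\tilde h].
\end{equation*}
The main obstacle is to control this nested double-kernel structure, which is the novel feature absent from the single-convolution setting of \cite{arxiv:1901.01678}: the second term couples an $\mathcal{R}_2$-convolution to an $\mathcal{R}_\alpha$-convolution through the Kelvin kernels $\mathcal{K}_2,\mathcal{K}_\alpha$. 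I plan to handle it by splitting all integrals into three regimes: a thin annulus around $\partial B_\mu(x)$ where the kernels vanish linearly in $(|z-x|-\mu)$, an intermediate compact region where a uniform positive lower bound on $\mathcal{K}_2$ and $\mathcal{K}_\alpha$ holds, and the far field where the powers $|z|^{-(n-2)}$ and $|z|^{-(n-\alpha)}$ are integrated against $v_k^{2^*_\alpha}$. The needed pointwise estimates on $\mathcal{K}_2,\mathcal{K}_\alpha$ follow the approach of Dai--Liu--Qin \cite{MR4226994} and Ma--Zhao \cite{MR2592284} adapted to the Riesz kernel of order $\alpha-n$. The perturbation $\tilde h_{x,\mu}-\tilde h$ is harmless after rescaling since \eqref{H-hypothesis} implies $|\tilde h_{x,\mu}-\tilde h|=\mathrm{O}(\lambda_k)$ on compact sets.

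Executing the moving sphere: as in the proof of Proposition~\ref{prop:symmetry}, first show that for small $\mu$ the Kelvin inequality $(v_k)_{x,\mu}\leqslant v_k$ holds, then enlarge $\mu$ using the positivity of the leading $\mathcal{K}_2$ term, with the cross-term and the $h$-term being subleading by the three-region estimates above. Passing to the limit $k\to\infty$ gives $(v_\infty)_{x,\mu}(y)\leqslant v_\infty(y)$ for all $x\in\mathbb R^n$, $\mu>0$, and $|y-x|\geqslant\mu$. Lemma~\ref{lm:msfundamentallemma}\,(ii) then forces $v_\infty$ to be constant, contradicting $v_\infty$ being a nontrivial bubble. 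Hence the contradiction assumption fails, yielding $\limsup_{x\to 0}|x|^{(n-2)/2}u(x)<\infty$.
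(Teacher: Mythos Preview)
Your proposal is correct and follows essentially the same strategy as the paper: contradiction plus blow-up selection, rescaling to a bounded sequence $w_k$, passage to a bubble limit via Theorem~\ref{thmA}, and then an asymptotic moving-spheres argument on the $w_k$ (not on the limit directly) using Lemma~\ref{lm:differencecomparison}, with the nested $\mathcal{K}_2$--$\mathcal{K}_\alpha$ structure handled by the three-region splitting you describe; the contradiction is exactly via Lemma~\ref{lm:msfundamentallemma}\,(ii). Two minor deviations worth noting: the paper uses the explicit weighted-maximum selection $\widehat u_k(x)=(\tfrac{|x_k|}{2}-|x-x_k|)^{(n-2)/2}u(x)$ rather than a Pol\'a\v{c}ik--Quittner--Souplet doubling lemma (same effect), and it runs the moving spheres only at the origin and then invokes Lemma~\ref{lm:msfundamentallemma2} to propagate to all centers, whereas you propose to slide at every $x$ directly; also, the compactness of the rescaled sequence comes from the uniform $L^\infty$ bound $w_k\leqslant 2^{(n-2)/2}$ and integral regularity, not from scale invariance of an $L^{2^*_\alpha}$ norm (which is not scale invariant).
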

	
	\begin{proof}
		We assume by contradiction that there exist $\{x_{k}\}_{k\in\mathbb{N}}\subset B_{2}$ such that $\lim_{k\rightarrow\infty}|x_k|=0$ and $|x_k|^{\frac{n-2}{2}} u(x_k)\rightarrow\infty$ as $k\rightarrow\infty$.
		For $|x-x_k| \leqslant\frac{1}{2}{|x_k|}$, we define
		\begin{equation*}
			\widehat{u}_{k}(x):=\left(\frac{|x_{k}|}{2}-|x-x_k|\right)^{\frac{n-2}{2}} u(x).
		\end{equation*}
		Hence, using that $u$ is positive and continuous in $\bar{B}_{|x_k|/2}(x_k)$, there exists a maximum point $\bar{x}_{k} \in B_{|x_k|/2}(x_k)$ of $\widehat{u}_{k}$, that is, 
		\[
		\widehat{u}_{k}(\bar{x}_{k})=\max_{|x-x_k|\leqslant\frac{|x_k|}{2}}\widehat{u}_{k}(x)>0.
		\]
		In addition, by taking $2\tau_{k}:=\frac{|x_k|}{2}-\left|\bar{x}_{k}-x_{k}\right|>0$, we get
		\begin{equation}\label{upbound1}
			0<2\tau_{k}\leqslant \frac{|x_k|}{2} \quad \mbox{and} \quad \frac{|x_k|}{2}-|x-x_k|\geqslant \tau_{k} \quad \mbox{for} \left|x-\bar{x}_{k}\right|\leqslant\tau_{k}.
		\end{equation}
		Moreover, it follows that $2^{\frac{n-2}{2}} u(\bar{x}_{k})\geqslant u(x)$ for $\left|x-\bar{x}_{k}\right|\leqslant\tau_{k}$ and
		\begin{equation}\label{upbound2}
			\left(2 \tau_{k}\right)^{\frac{n-2}{2}} u(\bar{x}_{k})=\widehat{u}_{k}(\bar{x}_{k})\geqslant \widehat{u}_{k}(x_k)={2}^{\frac{2-n}{2}}{|x_k|^{\frac{n-2}{2}}} u(x_k) \rightarrow \infty \quad \mbox{as} \quad k\rightarrow \infty.
		\end{equation}
		We consider 
		\[
		w_{k}(y)={u(\bar{x}_{k})^{-1}} u\left(\bar{x}_{k}+{y}{u(\bar{x}_{k})^{\frac{2}{2-n}}}\right) \quad {\rm and} \quad h_{k}(y)={u(\bar{x}_{k})^{-1}} h\left(\bar{x}_{k}+{y}{u(\bar{x}_{k})^{\frac{2}{2-n}}}\right) \quad {\rm in} \quad \Omega_{k},
		\]
		where 
		\[
		\Omega_{k}=\{y \in \mathbb{R}^{n}: \bar{x}_{k}+{y}{u(\bar{x}_{k})^{\frac{2}{2-n}}}\in B^*_{2}\}.
		\]
		Now, extending $w_{k}$ to be zero outside of $\Omega_{k}$ and using the integral representation, we get
		\begin{equation}\label{auxequation}
			w_{k}(y)=\int_{\mathbb R^n}\frac{1}{|y-y_2|^{n-2}}\left[\left(\int_{\mathbb R^n}\frac{F(w_{k}(y_1))}{|y_2-y_1|^{n-\alpha}}\ud y_1\right)f(w_{k}(y_2))\right]\ud y_2+h_k(y) \quad {\rm in} \quad \Omega_k
		\end{equation}
		with $w_{k}(0)=1$ for all $k\in\mathbb{N}$. Moreover, from \eqref{upbound1} and \eqref{upbound2}, it holds $\|h_{k}\|_{\mathcal{C}^{1}(B_{R_{k}})}\rightarrow 0$ and $w_{k}(z)\leqslant 2^{\frac{n-2}{2}}$ in $B_{R_{k}}$,
		where 
		\begin{equation}\label{radiusdivergence}
			R_{k}:=\tau_{k} u(\bar{x}_{k})^{\frac{2}{n-2}} \rightarrow \infty \quad {\rm as} \quad k\rightarrow\infty.
		\end{equation}
		Next, by adapting the regularity results for Hartree equations as in \cite{MR2055032} (see for instance \cite{MR3625092}), after passing to a subsequence if necessary, one can find $w_{0}>0$ such that $w_{k} \rightarrow w_{0}$ as $k\rightarrow\infty$ in $\mathcal{C}_{\loc}^{2,\alpha}(\mathbb{R}^{n})$ and $w_{0}(0)=1$. Following the proof of \cite[Theorem 1.6]{MR4304557}, we conclude that $w_{0}$ satisfies
		\begin{equation*}
			w_{0}(y)=\int_{\mathbb R^n}\frac{f(w_{0}(y_2))}{|y-y_2|^{n-2}}\left[\left(\int_{\mathbb R^n}\frac{F(w_{0}(y_1))}{|y_2-y_1|^{n-\alpha}}\ud y_1\right)\right]\ud y_2 \quad {\rm in} \quad \mathbb R^n.
		\end{equation*}
		By the classification in Theorem~\ref{thmA}, we imply that there exist $\mu>0$ and $z_{0} \in \mathbb{R}^{n}$ such that
		\begin{equation}\label{buble}
			w_{0}(y)=C_n(\alpha)\left(\frac{2\mu}{1+\mu^{2}\left|y-z_{0}\right|^{2}}\right)^{\frac{n-2}{2}},
		\end{equation}
		where $C_n(\alpha)>0$ is given by \eqref{alphaconstant}.
		
		In the following claim, we apply the moving spheres technique to generate a contradiction.
		By Lemma~\ref{lm:msfundamentallemma} and Lemma~\ref{lm:msfundamentallemma2}, it is enough to show that, for every $\mu>0$, it holds
		\begin{equation}\label{w0muleqw0}
			(w_{0})_{\mu}(y) \leqslant w_{0}(y) \quad {\rm for \ all} \quad |y|\geqslant \mu,
		\end{equation}
		where $(w_{0})_{\mu}$ is given in Definition~\ref{def:kelvintransform}. 
		Then, it follows that $w \equiv {\rm const.}$, which contradicts \eqref{buble}. 
		
		Fix $\mu_{0}>0$ arbitrarily. Then for all $k\gg1$ large, we have $0<\mu_{0}<\frac{R_{k}}{10}$. Define
		\[
		\widetilde{\Omega}_{k}:=\{y\in \mathbb{R}^{n}: \bar{x}_{k}+{y}{u(\bar{x}_k)^{\frac{2}{2-n}}} \in B^*_{1}\} \subset \subset {\Omega}_{k}.\]
		We will show that for all sufficiently large $k\gg1$, one has
		\begin{equation*}
			(w_{k})_{\mu_{0}}(y) \leqslant w_{k}(y) \quad {\rm  for} \quad |y|\geqslant\mu_{0} \quad {\rm  and} \quad y \in\widetilde{\Omega}_{k}.
		\end{equation*}
		By sending $k \to \infty$, \eqref{w0muleqw0} follows.
		
		By \cite[Lemma 3.1]{arxiv:1901.01678}, there exists $\bar{r}>0$ such that for all $0<\mu\leqslant\bar{r}$ and $\bar{x} \in B_{1/100}$,
		\begin{equation}\label{hxmuandh1}
			\left(\frac{\mu}{|y|}\right)^{n-2}h(\bar{x}+y_{\mu})\leqslant h(\bar{x}+y)
		\end{equation}
		for $|y| \geqslant\mu$ and $y \in B_{149/100}$. Let $k\gg1$ large such that $\mu_{0}u(\bar{x}_{k})^{-\frac{2}{n-2}}<\bar{r}$. Then we conclude that, for every $0<\mu\leqslant\mu_{0}$,
		\begin{equation}\label{hkmuleqhk}
			(h_k)_{\mu}(y)\leqslant h_{k}(y) \quad {\rm in} \quad \widetilde{\Omega}_{k}\setminus B_{\mu}. 
		\end{equation} 
		\noindent{\bf Claim 1:} There exists  $\mu_{1} \in (0,\mu_{0})$ (independent of $k$) such that for any $0<\mu<\mu_{1}$, we have
		\begin{equation}
			(w_k)_{\mu}(y) \leqslant w_{k}(y) \quad {\rm in} \quad \widetilde{\Omega}_{k}\setminus B_{\mu}.
		\end{equation}
		
		As a matter of fact, since $w_{k}\rightarrow w_{0}$ as $k\rightarrow\infty$ in $\mathcal{C}^{2,\alpha}$-topology and $w_{0}\in\mathcal{C}^2(\mathbb R^n)$ is given by \eqref{buble}, we get that  $\inf_{y\in B_1} w_{k}(y)>0$ for $k\gg1$. 
		On the other hand, by \eqref{auxequation} and standard elliptic regularity results, it follows that $\sup_{y\geqslant 0}|\nabla w_{k}(y)|<\infty$ on $B_{1}$. By \cite[Lemma 3.1]{arxiv:1901.01678}, 
		there exists $r_{0}>0$, not depending on $k\gg1$, such that for all $0<\mu\leqslant r_{0}$, it holds
		\begin{equation}\label{movingspheres1}
			(w_{k})_{\mu}(y)<w_{k}(y) \quad \mbox{for} \quad 0<\mu<|y|\leqslant r_{0}.
		\end{equation}
		Again, since $c_0:=\inf_{B_1}w_{k}>0$ for $k\gg1$ and $z\in\Omega_{k}$ there exists $C>0$ such that it holds
		\begin{align*}
			w_{k}(y)\geqslant c_0^{\frac{n+2(1+\alpha)}{n-2}}\int_{B_{1}}|y-y_2|^{2-n}\left(\int_{B_{1}}|y_2-y_1|^{\alpha-n}\ud y_1\right)\ud y_2&\geqslant  \frac{1}{C} (1+|y|)^{2-n}.
		\end{align*}
		where we used that $|y_2-y|\leqslant |y_2|+|y|\leqslant 1+|y|$.
		Therefore, one can find $0<\mu_{1}\leqslant r_{0}$ sufficiently small such that
		for all $0<\mu<\mu_{1}$, one has
		\begin{equation*}
			(w_{k})_{\mu}(y)\leqslant\left(\frac{\mu_{1}}{|y|}\right)^{n-2} \max _{B_{r_{0}(x)}} w_{k} \leqslant C\left(\frac{\mu_{1}}{|y|}\right)^{n-2} \leqslant w_{k}(y) \quad \mbox{for} \quad y\in \Omega_{k} \quad \mbox{and} \quad |y|\geqslant r_{0},
		\end{equation*}
		Together with \eqref{movingspheres1}, we conclude the proof of Claim 1.
		
		\noindent{\bf Claim 2:} For all $k\gg1$ sufficiently large, it holds that $\mu^*_k=\mu_{0}$, where
		\begin{equation*}
			\mu^*_k:=\sup\left\{0<\bar{\mu}\leqslant\mu_{0} : (w_{k})_{\mu}(y)\leqslant w_{k}(y) \ {\rm for \ any} \ y \in \widetilde{\Omega}_{k} \ \mbox{with} \ |y|\geqslant{\mu} \ \mbox{and} \ 0<\mu<\bar{\mu}\right\}.
		\end{equation*} 
		
		We assume that $\mu^*_k<\mu_{0}$. Indeed, using Lemma~\ref{lm:differencecomparison} and \eqref{hkmuleqhk}, when $\mu_k^*\leqslant\mu\leqslant\mu_k^*+\frac{1}{2}$ and $y\in\widetilde{\Omega}_{k}$ with $|y|>\mu$, it follows
		\begin{align}\label{wk-wkmu}
			w_{k}(y)-(w_{k})_{\mu}(y)&=\int_{\mathbb R^n\setminus B_\mu}\mathcal{K}_{2}(0,\mu;y_2,y)P_{k}(y_{2})\left[f(w_{k}(y_{2}))-f((w_{k})_{\mu}(y_{2}))\right]\ud y_{2}\\\nonumber
			&+\int_{\mathbb R^n\setminus B_\mu}\mathcal{K}_{2}(0,\mu;y_2,y)f((w_{k})_{\mu}(y_{2}))\Phi_k(0,\mu,\alpha,y_2;\mathbb R^n\setminus B_\mu)\ud y_{2}+[h_{k}(y)-(h_{k})_{\mu}(y)]\\\nonumber
			&\geqslant\int_{\widetilde{\Omega}_{k}\setminus B_\mu}\mathcal{K}_{2}(0,\mu;y_2,y)P_{k}(y_{2})\left[f(w_{k}(y_{2}))-f((w_{k})_{\mu}(y_{2}))\right]\ud y_{2}\\\nonumber
			&+\int_{\mathbb R^n\setminus B_\mu}\mathcal{K}_{2}(0,\mu;y_2,y)f((w_{k})_{\mu}(y_{2}))\Phi_k(0,\mu,\alpha,y_2;\widetilde{\Omega}_{k}\setminus B_\mu) \ud y_{2}+\mathcal{J}_{1}(\mu, w_{k}, y),
		\end{align}
		where
		\begin{equation*}
			\Phi_k(0,\mu,\alpha,y_2;\Omega)=\int_{\Omega}\mathcal{K}_{\alpha}(0,\mu;y_1,y_2)[F(w_{k}(y_1))-F((w_{k})_{\mu}(y_1))]\ud y_1.
		\end{equation*}
		and
		\begin{align*}
			& \mathcal{J}_{1}(\mu, w_{k}, y)\\
			&= \int_{\Omega_{k}\setminus \widetilde{\Omega}_{k}}\mathcal{K}_{2}(0,\mu;y_2,y)P_{k}(y_{2})\left[f(w_{k}(y_{2}))-f((w_{k})_{\mu}(y_{2}))\right]\ud y_{2}\\
			&-\int_{\mathbb{R}^{n}\setminus \Omega_{k}}\mathcal{K}_{2}(0,\mu;y_2,y)P_{k}(y_{2})f((w_{k})_{\mu}(y_{2}))\ud y_{2}\\
			&+\int_{\mathbb{R}^n\setminus\widetilde{\Omega}_{k}}\mathcal{K}_{2}(0,\mu;y_2,y)\left(\int_{\Omega_{k}\setminus\widetilde{\Omega}_{k}}\mathcal{K}_{\alpha}(0,\mu;y_1,y_2)[F(w_k(y_1))-F((w_k)_{\mu}(y_1))]\ud y_1\right)f((w_k)_{\mu}(y_2))\ud y_2\\
			&-\int_{\mathbb{R}^n\setminus\widetilde{\Omega}_{k}}\mathcal{K}_{2}(0,\mu;y_2,y)\left(\int_{\mathbb{R}^{n}\setminus \Omega_{k}}\mathcal{K}_{\alpha}(0,\mu;y_1,y_2)F((w_k)_{\mu}(y_1))\ud y_1\right)f((w_k)_{\mu}(y_2))\ud y_2\\
			&+\int_{\widetilde{\Omega}_{k}\setminus B_\mu}\mathcal{K}_{2}(0,\mu;y_2,y)\left(\int_{\Omega_{k}\setminus\widetilde{\Omega}_{k}}\mathcal{K}_{\alpha}(0,\mu;y_1,y_2)[F(w_k(y_1))-F((w_k)_{\mu}(y_1))]\ud y_1\right)f((w_k)_{\mu}(y_2))\ud y_2\\
			&-\int_{\widetilde{\Omega}_{k}\setminus B_\mu}\mathcal{K}_{2}(0,\mu;y_2,y)\left(\int_{\mathbb{R}^{n}\setminus \Omega_{k}}\mathcal{K}_{\alpha}(0,\mu;y_1,y_2)F((w_k)_{\mu}(y_1))\ud y_1\right)f((w_k)_{\mu}(y_2))\ud y_2.
		\end{align*}
		Here, we set
		\begin{equation}\label{Pk}
			P_{k}(y_{2})=\int_{\mathbb R^n} \frac{F(w_{k}(y_1))}{|y_{2}-y_{1}|^{n-\alpha}} \ud y_1
		\end{equation}
		and
		\begin{equation}\label{Pkmu}
			(P_k)_{\mu}(y_{2})=\int_{\mathbb R^n} \frac{F((w_{k})_{\mu}(y_1))}{|y_{2}-y_{1}|^{n-\alpha}} \ud y_1.
		\end{equation}
		It is easy to check that 
		\begin{equation*}
			\Phi_k(0,\mu,\alpha,y_2;\mathbb{R}^{n}\setminus B_{\mu})=P_{k}(y_{2})-(P_k)_{\mu}(y_{2}).
		\end{equation*}
		For $y\in\mathbb{R}^{n}\setminus\widetilde{\Omega}_{k}$ and $\mu_k^*\leqslant\mu\leqslant\mu_k^*+1$, we obtain that $|y|\geqslant\frac{1}{2} u(\bar{x}_{k})^{\frac{2}{n-2}}$, which yields
		\begin{equation*}
			(w_{k})_{\mu}(y)\leqslant\left(\frac{\mu}{|y|}\right)^{n-2} \max_{B_{\mu_0+1}} w_{k}\leqslant C u(\bar{x}_{k})^{-2}.
		\end{equation*}
		For $y\in\widetilde{\Omega}_{k}\setminus B_{\mu}$ and $\mu_k^*\leqslant\mu\leqslant\mu_k^*+1$, we have
		\begin{equation*}
			(w_{k})_{\mu}(y)\leqslant \max_{B_{\mu_0+1}} w_{k}\leqslant C .
		\end{equation*}
		In addition, since $c_{1}:=\inf_{B_{2}\setminus B_{1/2}}u>0$, we have 
		\begin{equation*}
			w_{k}(y)\geqslant c_{1}{u(\bar{x}_{k})^{-1}}\quad {\rm in} \quad \Omega_{k}\setminus\widetilde{\Omega}_{k}.
		\end{equation*}
		Thus, for large $k\gg1$ and $y \in \Omega_{k}\setminus\widetilde{\Omega}_{k}$, it holds
		\begin{equation*}
			F(w_k(y))-F((w_k)_{\mu}(y))\geqslant \frac{1}{2} F(w_k(y))\geqslant 
			Cu(\bar{x}_{k})^{-\frac{n+\alpha}{n-2}}
		\end{equation*}
		and
		\begin{equation*}
			f(w_k(y))-f((w_k)_{\mu}(y))\geqslant \frac{1}{2} f(w_k(y))\geqslant 
			Cu(\bar{x}_{k})^{-\frac{2+\alpha}{n-2}}.
		\end{equation*}
		Moreover, for $y_{2} \in \mathbb{R}^{n}\setminus \Omega_{k}$, define $z_{2} = \bar{x}_{k} + y_{2}u(\bar{x}_{k})^{-\frac{2}{n-2}}$, then we know that $z_{2} \in \mathbb{R}^{n}\setminus B_{2}$. By the change of variable $z=\bar{x}_{k} + yu(\bar{x}_{k})^{-\frac{2}{n-2}}$, we have
		\begin{align*}
			P_{k}(y_{2})
			=&u(\bar{x}_{k})^\frac{\alpha-n}{n-2}\int_{B_{2}}\frac{F(u(z))}{\left|z-z_{2}\right|^{n-\alpha}} \ud z\\
			=&u(\bar{x}_{k})^\frac{\alpha-n}{n-2}\int_{B_{2}\setminus B_{1}}\frac{F(u(z))}{\left|z-z_{2}\right|^{n-\alpha}} \ud z+u(\bar{x}_{k})^\frac{\alpha-n}{n-2}\int_{ B_{1}}\frac{F(u(z))}{\left|z-z_{2}\right|^{n-\alpha}} \ud z\\
			\leqslant& Cu(\bar{x}_{k})^\frac{\alpha-n}{n-2}\left(\|u\|_{L^{\infty}(B_{2} \setminus B_{1})}^{\frac{n+\alpha}{n-2}}+\|u\|_{L^{\frac{n+\alpha}{n-2}}(B_{1})}^{\frac{n+\alpha}{n-2}}\right)\\
			\leqslant &Cu(\bar{x}_{k})^\frac{\alpha-n}{n-2} \quad {\rm for} \quad y_{2} \in \mathbb{R}^{n}\setminus \Omega_{k}.
		\end{align*}
		Then, we claim that 
		\begin{align}\label{Jforwk}
			\nonumber
			\mathcal{J}_{1}(\mu, w_{k}, y)
			&\geqslant  \int_{\Omega_{k}\setminus \widetilde{\Omega}_{k}}\mathcal{K}_{2}(0,\mu;y_2,y)P_{k}(y_{2})\left[f(w_{k}(y_{2}))-f((w_{k})_{\mu}(y_{2}))\right]\ud y_{2}\\\nonumber
			&-\int_{\mathbb{R}^{n}\setminus \Omega_{k}}\mathcal{K}_{2}(0,\mu;y_2,y)P_{k}(y_{2})f((w_{k})_{\mu}(y_{2}))\ud y_{2}\\\nonumber
			&-\int_{\mathbb{R}^n\setminus\widetilde{\Omega}_{k}}\mathcal{K}_{2}(0,\mu;y_2,y)\left(\int_{\mathbb{R}^{n}\setminus \Omega_{k}}\mathcal{K}_{\alpha}(0,\mu;y_1,y_2)F((w_k)_{\mu}(y_1))\ud y_1\right)f((w_k)_{\mu}(y_2))\ud y_2\\\nonumber
			&-\int_{\widetilde{\Omega}_{k}\setminus B_\mu}\mathcal{K}_{2}(0,\mu;y_2,y)\left(\int_{\mathbb{R}^{n}\setminus \Omega_{k}}\mathcal{K}_{\alpha}(0,\mu;y_1,y_2)F((w_k)_{\mu}(y_1))\ud y_1\right)f((w_k)_{\mu}(y_2))\ud y_2\\\nonumber
			&\geqslant  \int_{\Omega_{k}\setminus \widetilde{\Omega}_{k}}\left(\frac{C}{u(\bar{x}_{k})}\right)^{\frac{2+\alpha}{n-2}}\mathcal{K}_{2}(0,\mu;y_2,y)\left(\int_{\Omega_{k}\setminus \widetilde{\Omega}_{k}}\frac{1}{|y_{2}-y_{1}|^{n-\alpha}}\left(\frac{C}{u(\bar{x}_{k})}\right)^{\frac{n+\alpha}{n-2}}\ud y_{1}\right)\ud y_{2}\\\nonumber
			&-Cu(\bar{x}_{k})^{\frac{\alpha-n}{n-2}}\int_{\mathbb{R}^{n}\setminus \Omega_{k}}\mathcal{K}_{2}(0,\mu;y_2,y)\left(\frac{\mu}{|y_{2}|}\right)^{2+\alpha}\ud y_{2}\\\nonumber
			&-\int_{\mathbb{R}^n\setminus\widetilde{\Omega}_{k}}\mathcal{K}_{2}(0,\mu;y_2,y)\left(\frac{\mu}{|y_{2}|}\right)^{2+\alpha}\left(\int_{\mathbb{R}^{n}\setminus \Omega_{k}}\mathcal{K}_{\alpha}(0,\mu;y_1,y_2)\left(\frac{\mu}{|y_{1}|}\right)^{n+\alpha}\ud y_1\right)\ud y_2\\\nonumber
			&-C\int_{\widetilde{\Omega}_{k}\setminus B_\mu}\mathcal{K}_{2}(0,\mu;y_2,y)\left(\int_{\mathbb{R}^{n}\setminus \Omega_{k}}\mathcal{K}_{\alpha}(0,\mu;y_1,y_2)\left(\frac{\mu}{|y_{1}|}\right)^{n+\alpha}\ud y_1\right)\ud y_2\\
			&\geqslant Cu(\bar{x}_{k})^{-1}, \quad{\rm if} \quad |y|\geqslant\mu_{k}^{*}+1 ,\ y\in \widetilde{\Omega}_{k}.  \quad{\rm if} \quad |y|\geqslant\mu_{k}^{*}+1 ,\ y\in \widetilde{\Omega}_{k}.
		\end{align} 
		for some $C>0$. Indeed, for any $\alpha \in (0,n)$, notice that $\mathcal{K}_\alpha(0,\mu;y_1,y_2)=0$ when $|y_2|=\mu$, and
		\begin{equation*}
			y_2\cdot\nabla_{y_2} \mathcal{K}_\alpha(0,\mu;y_1,y_2)\big|_{|y_2|=\mu}=(n-\alpha)|y_1-y_2|^{\alpha-n-2}(|y_1|^{2}-|y_2|^{2})>0
		\end{equation*}
		when $|y_1|\geqslant\mu_k^*+2$, which together with its positivity and smoothness, implies the existence of $0<\delta_{1}\leqslant\delta_{2}<\infty$ satisfying
		\begin{equation}\label{upperbound3}
			{\delta_{1}}{|y_2-y_1|^{\alpha-n}}(|y_2|-\mu)\leqslant \mathcal{K}_\alpha(0,\mu;y_1,y_2)\leqslant{\delta_{2}}{|y_2-y_1|^{\alpha-n}}(|y_2|-\mu),
		\end{equation}
		when $\mu_k^*\leqslant\mu\leqslant|y_2|\leqslant\mu_k^*+1$ and $\mu_k^*+2\leqslant|y_1|\leqslant R<\infty$.
		Furthermore, if $R\gg1$ is large, it follows
		\begin{equation*}
			0<c_*\leqslant y_2\cdot\nabla_{y_2}\left(|y_2-y_1|^{\alpha-n} \mathcal{K}_\alpha(0,\mu;y_1,y_2)\right)\leqslant C_*<\infty
		\end{equation*}
		when $|y_1|\geqslant R$ and $\mu_k^*\leqslant\mu\leqslant|y_2|\leqslant\mu_k^*+1$. Thus, \eqref{upperbound3} holds for $\mu_k^*\leqslant\mu\leqslant|y_2|\leqslant\mu_k^*+1$ and $|y_1|\geqslant R$.
		Besides, by definition, there exists $0<\delta_{3}\leqslant1$ such that
		\begin{equation}\label{upperbound6}
			{\delta_{3}}{|y_2-y_1|^{\alpha-n}}\leqslant \mathcal{K}_\alpha(0,\mu;y_1,y_2)\leqslant{|y_2-y_1|^{\alpha-n}},
		\end{equation}
		when $|y_2|\geqslant\mu_k^*+1$ and $|y_1|\geqslant\mu_k^*+2$. Therefore, for large $k$, $|y|\geqslant\mu_{k}^{*}+1 ,\ y\in \widetilde{\Omega}_{k}$, we have
		\begin{align*}
			\nonumber
			\mathcal{J}_{1}(\mu, w_{k}, y) &\geqslant Cu(\bar{x}_{k})^{-\frac{n+2+2\alpha}{n-2}}\int_{\Omega_{k}\setminus \widetilde{\Omega}_{k}}\frac{\delta_{3}}{|y-y_{2}|^{n-2}}\left(\int_{\Omega_{k}\setminus \widetilde{\Omega}_{k}}\frac{1}{|y_{2}-y_{1}|^{n-\alpha}}\ud y_{1}\right)\ud y_{2}\\
			&-Cu(\bar{x}_{k})^{\frac{\alpha-n}{n-2}}\int_{\mathbb{R}^{n}\setminus \Omega_{k}}\frac{1}{|y-y_{2}|^{n-2}}\left(\frac{\mu}{|y_{2}|}\right)^{2+\alpha}\ud y_{2}\\
			&-\int_{\mathbb{R}^n\setminus\widetilde{\Omega}_{k}}\frac{1}{|y-y_{2}|^{n-2}}\left(\frac{\mu}{|y_{2}|}\right)^{2+\alpha}\left(\int_{\mathbb{R}^{n}\setminus \Omega_{k}}\frac{1}{|y_{2}-y_{1}|^{n-\alpha}}\left(\frac{\mu}{|y_{1}|}\right)^{n+\alpha}\ud y_1\right)\ud y_2\\
			&-C\int_{\widetilde{\Omega}_{k}\setminus B_\mu}\frac{1}{|y-y_{2}|^{n-2}}\left(\int_{\mathbb{R}^{n}\setminus \Omega_{k}}\frac{1}{|y_{2}-y_{1}|^{n-\alpha}}\left(\frac{\mu}{|y_{1}|}\right)^{n+\alpha}\ud y_1\right)\ud y_2\\
			&\geqslant Cu(\bar{x}_{k})^{-1}-Cu(\bar{x}_{k})^{-\frac{n+\alpha}{n-2}}-Cu(\bar{x}_{k})^{-\frac{2n+2\alpha}{n-2}}-Cu(\bar{x}_{k})^{-2}\\
			&\geqslant Cu(\bar{x}_{k})^{-1},
		\end{align*}
		where we have used $u(\bar{x}_{k}) \to \infty$. 
		Hence, \eqref{Jforwk} is verified. Next, by \eqref{wk-wkmu} and \eqref{Jforwk}, there exists $\varepsilon_{1}\in(0,\frac{1}{2})$, depending on $k$, such that for $|y|\geqslant\mu_k^*+1$, it holds
		\begin{equation*}
			w_{k}(y)-(w_{k})_{\mu_k^*}(y)\geqslant{\varepsilon_{1}}{|y|^{2-n}} \quad {\rm in}  \quad \widetilde{\Omega}_k.
		\end{equation*}
		Using the last inequality and the formula for $(w_{k})_{\mu}$, there exists $0<\varepsilon_{2}<\varepsilon_{1}\ll1$ such that for $|y|\geqslant\mu_k^*+1$ and $\mu_k^*\leqslant \mu\leqslant\mu_k^*+\varepsilon_{2}$, we get
		\begin{align}\label{upperbound5}
			w_{k}(y)-(w_{k})_{\mu}(y)\geqslant{\varepsilon_{1}}{|y|^{2-n}}+\left[(w_{k})_{\mu_k^*}(y)-(w_{k})_{\mu}(y)\right]\geqslant\frac{\varepsilon_{1}}{2}{|y|^{2-n}}.
		\end{align}
		Hence $w_{k}(y)\geqslant(w_{k})_{\mu}(y)$ for $|y|\geqslant\mu_k^*+1$ and $\mu_k^*\leqslant \mu\leqslant\mu_k^*+\varepsilon_{2}$. 
		
		For any $\mu_k^*\leqslant \mu\leqslant\mu_k^*+\varepsilon_{2}$ and $\mu \leqslant |y|\leqslant \mu_{k}^{*}+1$, we have
		\begin{align}\label{wk-wkmu2}
			\nonumber
			w_{k}(y)-(w_{k})_{\mu}(y)&\geqslant\int_{\widetilde{\Omega}_{k}\setminus B_\mu}\mathcal{K}_{2}(0,\mu;y_2,y)P_{k}(y_{2})\left[f(w_{k}(y_{2}))-f((w_{k})_{\mu}(y_{2}))\right]\ud y_{2}\\\nonumber
			&+\int_{\widetilde{\Omega}_{k}\setminus B_\mu}\mathcal{K}_{2}(0,\mu;y_2,y)f((w_{k})_{\mu}(y_{2}))\left[P_{k}(y_{2})-(P_k)_{\mu}(y_{2})\right] \ud y_{2}\\\nonumber
			&+\int_{\mathbb{R}^n\setminus\widetilde{\Omega}_{k}}\mathcal{K}_{2}(0,\mu;y_2,y)\Phi_k(0,\mu,\alpha,y_2;\widetilde{\Omega}_{k}\setminus B_\mu)f((w_k)_{\mu}(y_2))\ud y_2\\
			&+\mathcal{J}_{2}(\mu, w_{k}, y).
		\end{align}
		where
		\begin{align*}
			&\mathcal{J}_{2}(\mu, w_{k}, y)\\
			&= \int_{\Omega_{k}\setminus \widetilde{\Omega}_{k}}\mathcal{K}_{2}(0,\mu;y_2,y)P_{k}(y_{2})\left[f(w_{k}(y_{2}))-f((w_{k})_{\mu}(y_{2}))\right]\ud y_{2}\\
			&-\int_{\mathbb{R}^{n}\setminus \Omega_{k}}\mathcal{K}_{2}(0,\mu;y_2,y)P_{k}(y_{2})f((w_{k})_{\mu}(y_{2}))\ud y_{2}\\
			&-\int_{\mathbb{R}^n\setminus\widetilde{\Omega}_{k}}\mathcal{K}_{2}(0,\mu;y_2,y)\left(\int_{\mathbb{R}^{n}\setminus \Omega_{k}}\mathcal{K}_{\alpha}(0,\mu;y_1,y_2)F((w_k)_{\mu}(y_1))\ud y_1\right)f((w_k)_{\mu}(y_2))\ud y_2\\
			&+\int_{\mathbb{R}^n\setminus\widetilde{\Omega}_{k}}\mathcal{K}_{2}(0,\mu;y_2,y)\left(\int_{\Omega_{k}\setminus\widetilde{\Omega}_{k}}\mathcal{K}_{\alpha}(0,\mu;y_1,y_2)[F(w_k(y_1))-F((w_k)_{\mu}(y_1))]\ud y_1\right)f((w_k)_{\mu}(y_2))\ud y_2
		\end{align*}
		Using a similar method as employed in the proof of the estimation of $\mathcal{J}_{1}(\mu, w_{k}, y)$, we obtain
		\begin{align*}
			\mathcal{J}_{2}(\mu, w_{k}, y)
			&\geqslant\left[Cu(\bar{x}_{k})^{-1}-Cu(\bar{x}_{k})^{-\frac{n+\alpha}{n-2}}-Cu(\bar{x}_{k})^{-\frac{2n+2\alpha}{n-2}}\right] \left(|y|-\mu\right)\\
			& \geqslant
			Cu(\bar{x}_{k})^{-1} \left(|y|-\mu\right)
		\end{align*}
		for any $\mu_k^*\leqslant \mu\leqslant\mu_k^*+\varepsilon_{2}$ and $\mu \leqslant |y|\leqslant \mu_{k}^{*}+1$. 
		Next, we will show that there exists $\varepsilon_{3} \in (0,\varepsilon_{2})$, which we choose below, such that for any $\mu_k^*\leqslant \mu\leqslant\mu_k^*+\varepsilon_{3}$ and for $y_{2} \in \mathbb{R}^{n} \setminus \widetilde{\Omega}_{k}$, 
		\begin{equation*}
			\Phi_k(0,\mu,\alpha,y_2;\widetilde{\Omega}_{k}\setminus B_\mu)=\int_{\widetilde{\Omega}_{k}\setminus B_{\mu}}\mathcal{K}_{\alpha}(0,\mu;y_1,y_2)[F(w_k(y_1))-F((w_k)_{\mu}(y_1))]\ud y_1 \geqslant0 .
		\end{equation*}
		Indeed, for any $y_{2} \in \mathbb{R}^{n} \setminus \widetilde{\Omega}_{k}$, we have
		\begin{align*}	    
			\Phi_k(0,\mu,\alpha,y_2;\widetilde{\Omega}_{k}\setminus B_\mu) 
			&\geqslant -C\int_{\mu \leqslant |y_{1}| \leqslant \mu+\varepsilon_{3}} \mathcal{K}_{\alpha}(0,\mu;y_1,y_2)\left(|y_{1}|-\mu\right) \ud y_{1} \\
			&+\int_{\mu+\varepsilon_{3}\leqslant |y_{1}|\leqslant\mu_{k}^{*}+1}\mathcal{K}_{\alpha}(0,\mu;y_1,y_2)[F((w_k)_{\mu_{k}^{*}}(y_1))-F((w_k)_{\mu}(y_1))]\ud y_{1} \\
			&+\int_{\mu_{k}^{*}+2 \leqslant |y_{1}|\leqslant \mu_{k}^{*}+3} \mathcal{K}_{\alpha}(0,\mu;y_1,y_2)[F(w_k(y_1))-F((w_k)_{\mu}(y_1))]\ud y_1,
		\end{align*}
		where we used the estimate
		\begin{equation}\label{F-Fmuinmu-mu+varep}
			\left|F(w_k(y_1))-F((w_k)_{\mu}(y_1))\right| \leqslant C\left(|y_{1}|-\mu\right).
		\end{equation}
		Since \eqref{upperbound5}, there exists $\delta_{4}>0$ such that 
		\begin{equation}\label{F-Fmuinmu+2-mu+3}
			F(w_k(y_1))-F((w_k)_{\mu}(y_1)) \geqslant \delta_{4} \quad {\rm for} \quad \mu_{k}^{*}+2 \leqslant |y_{1}|\leqslant \mu_{k}^{*}+3.
		\end{equation}
		Moreover, since $\|w_{k}\|_{\mathcal{C}^{1}(B_{2})}$ is uniformly bounded (independent of $k$), there exists a constant \(C > 0\) (independent of \(\varepsilon_3\)) such that for all \(\mu\) satisfying \(\mu_k^* \leqslant \mu \leqslant \mu_k^* + \varepsilon_3\), the following inequality holds:  
		\begin{equation}\label{F-Fmuinmu-mu+1}
			\left|F((w_k)_{\mu_{k}^{*}}(y_1))-F((w_k)_{\mu}(y_1))\right|\leqslant C\left(\mu-\mu_{k}^{*}\right)\leqslant C\varepsilon_{3} \quad {\rm for \ all} \quad \mu \leqslant|y_{1}|\leqslant \mu_{k}^{*}+1.
		\end{equation}
		Thus, combined with the estimates for $\mathcal{K}_{\alpha}(0,\mu;y_1,y_2)$, implies that, for $y_{2} \in \mathbb{R}^{n} \setminus \widetilde{\Omega}_{k}$, we have
		\begin{align*}	    
			&\Phi_k(0,\mu,\alpha,y_2;\widetilde{\Omega}_{k}\setminus B_\mu)\\
			&\geqslant -C\varepsilon_{3}\int_{\mu \leqslant |y_{1}| \leqslant \mu_{k}^{*}+1} \frac{1}{\left|y_{1}-y_{2}\right|^{n-\alpha}}\ud y_{1} +\delta_{3}\delta_{4}\int_{\mu_{k}^{*}+2 \leqslant |y_{1}|\leqslant \mu_{k}^{*}+3} \frac{1}{\left|y_{1}-y_{2}\right|^{n-\alpha}}\ud y_1\\
			&\geqslant-C\varepsilon_{3}\int_{\mu \leqslant |y_{1}| \leqslant \mu_{k}^{*}+1} \frac{1}{\left(|y_{2}|-|y_{1}|\right)^{n-\alpha}}\ud y_{1} +\delta_{3}\delta_{4}\int_{\mu_{k}^{*}+2 \leqslant |y_{1}|\leqslant \mu_{k}^{*}+3} \frac{1}{\left(|y_{2}|+|y_{1}|\right)^{n-\alpha}}\ud y_1\\
			&\geqslant-C\varepsilon_{3}\int_{\mu \leqslant |y_{1}| \leqslant \mu_{k}^{*}+1} \left(\frac{2}{|y_2|}\right)^{n-\alpha}\ud y_{1} +\delta_{3}\delta_{4}\int_{\mu_{k}^{*}+2 \leqslant |y_{1}|\leqslant \mu_{k}^{*}+3} \left(\frac{1}{2|y_{2}|}\right)^{n-\alpha}\ud y_1\\
			&\geqslant \left(\frac{1}{|y_2|}\right)^{n-\alpha}\left(\delta_{3}\delta_{4}c-C\varepsilon_{3}\right) \geqslant0
		\end{align*}
		if $0<\varepsilon_{3}\ll1$ is sufficiently small. Therefore, for any $\mu_k^*\leqslant \mu\leqslant\mu_k^*+\varepsilon_{3}$ and $\mu \leqslant |y|\leqslant \mu_{k}^{*}+1$, we obtain
		\begin{align}\label{wk-wkmu3}
			\nonumber
			w_{k}(y)-(w_{k})_{\mu}(y)&\geqslant\int_{\widetilde{\Omega}_{k}\setminus B_\mu}\mathcal{K}_{2}(0,\mu;y_2,y)P_{k}(y_{2})\left[f(w_{k}(y_{2}))-f((w_{k})_{\mu}(y_{2}))\right]\ud y_{2}\\
			&+\int_{\widetilde{\Omega}_{k}\setminus B_\mu}\mathcal{K}_{2}(0,\mu;y_2,y)f((w_{k})_{\mu}(y_{2}))\left[P_{k}(y_{2})-(P_k)_{\mu}(y_{2})\right] \ud y_{2}.
		\end{align}
		Now, we will prove that there exists $\widetilde{\varepsilon} \in(0, \varepsilon_{3})$, such that for any $\mu_k^*\leqslant \mu\leqslant\mu_k^*+\widetilde{\varepsilon}$ and $y_{2} \in \widetilde{\Omega}_{k}\setminus B_\mu$, we have
		\begin{equation}\label{all space P-pmu}
			P_{k}(y_{2})\geqslant(P_k)_{\mu}(y_{2}).
		\end{equation}
		For any $\mu_k^*\leqslant \mu\leqslant\mu_k^*+\varepsilon_{3}$ and $y_{2} \in \widetilde{\Omega}_{k}\setminus B_\mu$,
		\begin{align}\label{Pk-Pkmu}
			P_{k}(y_{2})-(P_{k})_{\mu}(y_2)&=\int_{\mathbb R^n\setminus B_\mu}\mathcal{K}_{\alpha}(0,\mu;y_1,y_2)[F(w_{k}(y_1))-F((w_{k})_{\mu}(y_1))]\ud y_1\\	\nonumber
			&\geqslant\int_{\widetilde{\Omega}_{k}\setminus B_{\mu}}{K}_{\alpha}(0,\mu;y_1,y_2)[F(w_{k}(y_1))-F((w_{k})_{\mu}(y_1))]\ud y_1+\mathcal{J}_{3}(\mu, P_{k}, y_{2}),
		\end{align}
		where
		\begin{align*}
			&\mathcal{J}_{3}(\mu, P_{k}, y_{2})\\
			&=\int_{\mathbb{R}_{n}\setminus \widetilde{\Omega}_{k}}{K}_{\alpha}(0,\mu;y_1,y_2)[F(w_{k}(y_1))-F((w_{k})_{\mu}(y_1))]\ud y_1\\
			&=\int_{\Omega_{k}\setminus\widetilde{\Omega}_{k} }{K}_{\alpha}(0,\mu;y_1,y_2)[F(w_{k}(y_1))-F((w_{k})_{\mu}(y_1))]\ud y_1-\int_{\mathbb{R}_{n}\setminus \Omega_{k}}{K}_{\alpha}(0,\mu;y_1,y_2)F((w_{k})_{\mu}(y_1))\ud y_1
		\end{align*}
		Using the similar method as employed in the proof of the estimation of $\mathcal{J}_{1}(\mu, w_{k}, y)$ and $\mathcal{J}_{2}(\mu, w_{k}, y)$, we get 
		\begin{align} \label{J3geq}
			\nonumber
			\mathcal{J}_{3}(\mu, P_{k}, y_{2})
			&\geqslant\frac{1}{2}\left(\frac{c}{u(\bar{x}_{k})}\right)^{\frac{n+\alpha}{n-2}} \int_{\Omega_{k}\setminus\widetilde{\Omega}_{k}}   {\mathcal{K}_\alpha(0,\mu;y_1,y_2)}\ud y_1-C\int_{\mathbb{R}^n\setminus\Omega_{k}}    {\mathcal{K}_\alpha(0,\mu;y_1,y_2)}\left(\frac{\mu}{|y_1|}\right)^{n+\alpha}\ud y_1\\
			&\geqslant
			\left\{\begin{array}{ll}C{(|y_{2}|-\mu) u(\bar{x}_{k})^{\frac{\alpha-n}{n-2}}}, & {\mbox{if} \quad \mu\leqslant|y_{2}|\leqslant\mu_k^*+1} \\ C{u(\bar{x}_{k})^{\frac{\alpha-n}{n-2}}}, & {\mbox{if} \quad |y_{2}|>\mu_k^*+1 \quad \mbox{and} \quad y_{2}\in \widetilde{\Omega}_{k}}.
			\end{array}\right.
		\end{align} 
		Thus, for $\widetilde{\varepsilon}\in (0, \varepsilon_{3}]$ which we choose below, by \eqref{F-Fmuinmu-mu+varep}, \eqref{F-Fmuinmu+2-mu+3}, \eqref{F-Fmuinmu-mu+1}, \eqref{Pk-Pkmu} and \eqref{J3geq}, we have, for $\mu_k^*\leqslant \mu\leqslant\mu_k^*+\widetilde{\varepsilon}$ and for $|y_{2}|\geqslant \mu$, 
		\begin{align*}
			P_{k}(y_{2})-(P_{k})_{\mu}(y_2) \geqslant& -C\int_{\mu \leqslant |y_{1}| \leqslant \mu+\widetilde{\varepsilon}} \mathcal{K}_{\alpha}(0,\mu;y_1,y_2)\left(|y_{1}|-\mu\right) \ud y_{1} \\
			&-C\widetilde{\varepsilon}\int_{\mu+\widetilde{\varepsilon}\leqslant |y_{1}|\leqslant\mu_{k}^{*}+1}\mathcal{K}_{\alpha}(0,\mu;y_1,y_2)\ud y_{1} \\
			&+\delta_{4}\int_{\mu_{k}^{*}+2 \leqslant |y_{1}|\leqslant \mu_{k}^{*}+3} \mathcal{K}_{\alpha}(0,\mu;y_1,y_2)\ud y_1,
		\end{align*}
		Using techniques similar to those applied in \eqref{v-vxmu1.0} and \eqref{v-vxmu2.0}, we easily obtain that
		\begin{equation}\label{P-pkformu+1}
			P_{k}(y_{2})\geqslant(P_{k})_{\mu}(y_2) \quad \forall \ |y_{2}|\geqslant\mu_{k}^{*}+1, \ y_{2}\in \widetilde{\Omega}_{k}.
		\end{equation}
		Moreover, by applying estimates analogous to those in \eqref{Kalphaestimate3} and \eqref{Kalphaestimate4} to  $\mathcal{K}_{\alpha}(0,\mu;y_1,y_2)$, together with \eqref{upperbound3}, we can conclude that for $\mu\leqslant|y_{2}|\leqslant\mu_k^*+1$, 
		\begin{equation*}
			P_{k}(y_{2})-(P_{k})_{\mu}(y_2)\geqslant C(|y_{2}|-\mu)\left(\delta_{1}\delta_{4}c-C\widetilde{\varepsilon}^{\frac{\alpha}{n}}\right) \geqslant 0.
		\end{equation*}
		if $0<\widetilde{\varepsilon}\ll1$ is sufficiently small. Therefore, combined with \eqref{P-pkformu+1}, we can get that \eqref{all space P-pmu} is verified. It follows from \eqref{all space P-pmu}, we have for $\mu_k^*\leqslant \mu\leqslant\mu_k^*+\widetilde{\varepsilon}$ and $\mu\leqslant|y|\leqslant\mu_k^*+1$, 
		\begin{equation}
			\nonumber
			w_{k}(y)-(w_{k})_{\mu}(y)\geqslant\int_{\widetilde{\Omega}_{k}\setminus B_\mu}\mathcal{K}_{2}(0,\mu;y_2,y)P_{k}(y_{2})\left[f(w_{k}(y_{2}))-f((w_{k})_{\mu}(y_{2}))\right]\ud y_{2}.
		\end{equation}
		For $\varepsilon \in(0,\widetilde{\varepsilon}]$ which we choose below, by \eqref{upperbound5}, we have for any $\mu_k^*\leqslant \mu\leqslant\mu_k^*+\varepsilon$ and $\mu\leqslant|y|\leqslant\mu_k^*+1$,
		\begin{align}\label{wk-wkmu4}
			\nonumber
			w_{k}(y)-(w_{k})_{\mu}(y)\geqslant&-C\int_{\mu\leqslant|y_{2}|\leqslant\mu+\varepsilon}\mathcal{K}_{2}(0,\mu;y_2,y)P_{k}(y_{2})\left(|y_{2}|-\mu\right)\ud y_{2}\\\nonumber
			&+\int_{\mu+\varepsilon\leqslant|y_{2}|\leqslant\mu_{k}^{*}+1}\mathcal{K}_{2}(0,\mu;y_2,y)P_{k}(y_{2})\left[f((w_{k})_{\mu_{k}^{*}}(y_{2}))-f((w_{k})_{\mu}(y_{2}))\right]\ud y_{2}\\
			&+\int_{\mu_{k}^{*}+2\leqslant|y_{2}|\leqslant\mu_{k}^{*}+3}\mathcal{K}_{2}(0,\mu;y_2,y)P_{k}(y_{2})\left[f(w_{k}(y_{2}))-f((w_{k})_{\mu}(y_{2}))\right]\ud y_{2}
		\end{align}
		where we have used the estimate
		\begin{equation*}
			\left|f(w_k(y_2))-f((w_k)_{\mu}(y_2))\right| \leqslant C\left(|y_{2}|-\mu\right).
		\end{equation*}
		Since \eqref{upperbound5}, there exists $\delta_{5}>0$ such that 
		\begin{equation}\label{f-fmuinmu+2-mu+3}
			f(w_k(y_2))-f((w_k)_{\mu}(y_2)) \geqslant \delta_{5} \quad {\rm for} \quad \mu_{k}^{*}+2 \leqslant |y_{2}|\leqslant \mu_{k}^{*}+3.
		\end{equation}
		Moreover, since $w_{k}\in{\mathcal{C}^{1}(B_{2})}$ is uniformly bounded and $w_{k}>0$ in $B_{2}$, there exists a constant \(C > 0\) (independent of \(\varepsilon\)) such that for all \(\mu\) satisfying \(\mu_k^* \leqslant \mu \leqslant \mu_k^* + \varepsilon\), the following inequality holds:  
		\begin{equation}\label{f-fmuinmu-mu+1}
			\left|f((w_k)_{\mu_{k}^{*}}(y_2))-f((w_k)_{\mu}(y_2))\right|\leqslant C\left(\mu-\mu_{k}^{*}\right)\leqslant C\varepsilon \quad \forall \mu \leqslant|y_{2}|\leqslant \mu_{k}^{*}+1.
		\end{equation}
		Thus, by \eqref{wk-wkmu4}, \eqref{f-fmuinmu+2-mu+3} and \eqref{f-fmuinmu-mu+1}, we have
		\begin{align}\label{wk-wkmu5}
			\nonumber
			w_{k}(y)-(w_{k})_{\mu}(y)\geqslant&-C\int_{\mu\leqslant|y_{2}|\leqslant\mu+\varepsilon}\mathcal{K}_{2}(0,\mu;y_2,y)P_{k}(y_{2})\left(|y_{2}|-\mu\right)\ud y_{2}\\\nonumber
			&-C\varepsilon\int_{\mu+\varepsilon\leqslant|y_{2}|\leqslant\mu_{k}^{*}+1}\mathcal{K}_{2}(0,\mu;y_2,y)P_{k}(y_{2})\ud y_{2}\\
			&+\delta_{5}\int_{\mu_{k}^{*}+2\leqslant|y_{2}|\leqslant\mu_{k}^{*}+3}\mathcal{K}_{2}(0,\mu;y_2,y)P_{k}(y_{2})\ud y_{2}.
		\end{align}
		Next, we analyze the estimation of $P_{k}(y_{2})$. For $\mu\leqslant|y_{2}|\leqslant\mu_{k}^{*}+1$, it follows
		\begin{align*}
			P_{k}(y_{2})=\int_{\widetilde{\Omega}_{k}} \frac{F(w_{k}(y_1))}{|y_{2}-y_{1}|^{n-\alpha}} \ud y_1&=\int_{B_{3\mu_{k}^{*}+7}} \frac{F(w_{k}(y_1))}{|y_{2}-y_{1}|^{n-\alpha}} \ud y_1+\int_{\widetilde{\Omega}_{k}\setminus B_{3\mu_{k}^{*}+7}} \frac{F(w_{k}(y_1))}{|y_{2}-y_{1}|^{n-\alpha}}\ud y_{1}\\
			&\leqslant\int_{B_{3\mu_{k}^{*}+7}} \frac{C}{|y_{2}-y_{1}|^{n-\alpha}} \ud y_1+\int_{\widetilde{\Omega}_{k}\setminus B_{3\mu_{k}^{*}+7}} \frac{F(w_{k}(y_1))}{\left(|y_{1}|-|y_{2}|\right)^{n-\alpha}}\ud y_{1}\\
			&\leqslant C_{1}+\int_{\widetilde{\Omega}_{k}\setminus B_{3\mu_{k}^{*}+7}} \frac{F(w_{k}(y_1))}{\left(|y_{1}|-(\mu_{k}^{*}+1)\right)^{n-\alpha}}\ud y_{1}=:C_{1}+\mathcal{A}.
		\end{align*}
		For the final term $\mathcal{A}$, an estimate can be achieved using the decomposition of the integration region, which we describe as follows.
		Note $a_{0}:=3\mu_{k}^{*}+7$, for every positive $\ell \in \mathbb{N}$, we define $a_{\ell}=4a_{\ell-1}-5\mu_k^{*}-7$, {\it i.e.}, $4\left(a_{\ell-1}-\left(\mu_{k}^{*}+1\right)\right)=a_{\ell}+\left(\mu_{k}^{*}+3\right)$. Thus, for $k\gg1$ sufficiently large, we have
		\begin{align*}
			\mathcal{A}=&\int_{B_{a_{1}}\setminus B_{a_{0}}}\frac{F(w_{k}(y_1))}{\left(|y_{1}|-(\mu_{k}^{*}+1)\right)^{n-\alpha}}\ud y_{1}+\int_{B_{a_{2}}\setminus B_{a_{1}}}\frac{F(w_{k}(y_1))}{\left(|y_{1}|-(\mu_{k}^{*}+1)\right)^{n-\alpha}}\ud y_{1}+...\\
			\leqslant&\left(\frac{1}{a_{0}-\left(\mu_{k}^{*}+1\right)}\right)^{n-\alpha}\int_{B_{a_{1}}\setminus B_{a_{0}}}F(w_{k}(y_1)) \ud y_{1}+\left(\frac{1}{a_{1}-\left(\mu_{k}^{*}+1\right)}\right)^{n-\alpha}\int_{B_{a_{2}}\setminus B_{a_{1}}}F(w_{k}(y_1)) \ud y_{1}+...\\
			=&4^{n-\alpha}\left[\left(\frac{1}{a_{1}+\left(\mu_{k}^{*}+3\right)}\right)^{n-\alpha}\int_{B_{a_{1}}\setminus B_{a_{0}}}F(w_{k}(y_1)) \ud y_{1}+\left(\frac{1}{a_{2}+\left(\mu_{k}^{*}+3\right)}\right)^{n-\alpha}\int_{B_{a_{2}}\setminus B_{a_{1}}}F(w_{k}(y_1)) \ud y_{1}+...\right]\\
			=:&4^{n-\alpha}\mathcal{I}.
		\end{align*}
		For $\mu_{k}^{*}+2\leqslant|y_{2}|\leqslant\mu_{k}^{*}+3$, we get
		\begin{align*}
			P_{k}(y_{2})=\int_{\widetilde{\Omega}_{k}} \frac{F(w_{k}(y_1))}{|y_{2}-y_{1}|^{n-\alpha}} \ud y_1&=\int_{B_{3\mu_{k}^{*}+7}} \frac{F(w_{k}(y_1))}{|y_{2}-y_{1}|^{n-\alpha}} \ud y_1+\int_{\widetilde{\Omega}_{k}\setminus B_{3\mu_{k}^{*}+7}} \frac{F(w_{k}(y_1))}{|y_{2}-y_{1}|^{n-\alpha}}\ud y_{1}\\
			&\geqslant\int_{B_{3\mu_{k}^{*}+7}} \frac{C}{|y_{2}-y_{1}|^{n-\alpha}} \ud y_1+\int_{\widetilde{\Omega}_{k}\setminus B_{3\mu_{k}^{*}+7}} \frac{F(w_{k}(y_1))}{\left(|y_{1}|+|y_{2}|\right)^{n-\alpha}}\ud y_{1}\\
			&\geqslant C_{2}+\int_{\widetilde{\Omega}_{k}\setminus B_{3\mu_{k}^{*}+7}} \frac{F(w_{k}(y_1))}{\left(|y_{1}|+(\mu_{k}^{*}+3)\right)^{n-\alpha}}\ud y_{1}=:C_{2}+\mathcal{B}.
		\end{align*}
		For the final term $\mathcal{B}$, for $k\gg1$ sufficiently large, we have
		\begin{align*}
			\mathcal{B}=&\int_{B_{a_{1}}\setminus B_{a_{0}}}\frac{F(w_{k}(y_1))}{\left(|y_{1}|+(\mu_{k}^{*}+3)\right)^{n-\alpha}}\ud y_{1}+\int_{B_{a_{2}}\setminus B_{a_{1}}}\frac{F(w_{k}(y_1))}{\left(|y_{1}|+(\mu_{k}^{*}+3)\right)^{n-\alpha}}\ud y_{1}+...\\
			\geqslant&\left(\frac{1}{a_{1}+\left(\mu_{k}^{*}+3\right)}\right)^{n-\alpha}\int_{B_{a_{1}}\setminus B_{a_{0}}}F(w_{k}(y_1)) \ud y_{1}+\left(\frac{1}{a_{2}+\left(\mu_{k}^{*}+3\right)}\right)^{n-\alpha}\int_{B_{a_{2}}\setminus B_{a_{1}}}F(w_{k}(y_1)) \ud y_{1}+...\\
			=&\mathcal{I}.
		\end{align*}
		Thus, by the estimation of $P_{k}(y_{2})$ and \eqref{wk-wkmu5}, we obtain 
		\begin{align}\label{wk-wkmu6}
			\nonumber
			w_{k}(y)-(w_{k})_{\mu}(y)\geqslant&-C(C_{1}+4^{n-\alpha}\mathcal{I})\int_{\mu\leqslant|y_{2}|\leqslant\mu+\varepsilon}\mathcal{K}_{2}(0,\mu;y_2,y)\left(|y_{2}|-\mu\right)\ud y_{2}\\\nonumber
			&-C(C_{1}+4^{n-\alpha}\mathcal{I})\varepsilon\int_{\mu+\varepsilon\leqslant|y_{2}|\leqslant\mu_{k}^{*}+1}\mathcal{K}_{2}(0,\mu;y_2,y)\ud y_{2}\\
			&+\delta_{5}\left(C_{2}+\mathcal{I}\right)\int_{\mu_{k}^{*}+2\leqslant|y_{2}|\leqslant\mu_{k}^{*}+3}\mathcal{K}_{2}(0,\mu;y_2,y)\ud y_{2}.
		\end{align}
		By applying estimates analogous to those in \eqref{Kalphaestimate3} and \eqref{Kalphaestimate4} to  $\mathcal{K}_{2}(0,\mu;y_2,y)$, together with \eqref{upperbound3}, we can conclude that, for $\mu\leqslant|y|\leqslant\mu_k^*+1$, it holds
		\begin{align*}
			\nonumber
			w_{k}(y)-(w_{k})_{\mu}(y)\geqslant&\left(\delta_{1}\delta_{5}c_{1}-c_{2}\varepsilon^{\frac{2}{n}}\right)\left(|y|-\mu\right)+\left(\delta_{1}\delta_{5}c_{3}-c_{4}4^{n-\alpha}\varepsilon^{\frac{2}{n}}\right)\mathcal{I}\left(|y|-\mu\right)\geqslant0
		\end{align*}
		when $0<\varepsilon\ll1$ is sufficiently small. This and \eqref{upperbound5} contradict the definition of $\mu_{k}^{*}$ if $\mu_{k}^{*}<\mu_{0}$, where $c_{1},c_{2},c_{3},c_{4}>0$ are constants. Therefore, Claim 2 is proved.
		
		Thus, \eqref{w0muleqw0} is verified, and the proof of this proposition is completed. 
	\end{proof}
	
	\subsection{Asymptotic radial symmetry}
	Now we prove some asymptotic symmetry invariance for local solutions near the origin. Our proof also relies on the integral moving spheres technique.
	\begin{proposition}\label{prop:asymptoticsymmetry}
		Let $n\geqslant 3$  and $\alpha\in(0,n)$.
		If $u \in \mathcal{C}(B_2^*) \cap L^{\frac{n+2}{n-2}}(B_2)$  is a positive solution to \eqref{ourlocalPDEdualR=2} and $h \in \mathcal{C}^{1}(B_{2})$ is a positive function satisfying \eqref{H-hypothesis},  
		then it follows
		\begin{equation*}
			u(x)=\bar{u}(|x|)(1+\mathcal{O}(|x|)) \quad {\rm as } \quad x \rightarrow 0.
		\end{equation*}
		where $\bar{u}(|x|)=\mint_{\partial B_{|x|}(0)}u(|x|\theta) \ud \theta$ is the spherical average of $u$ over the sphere $\partial B_{|x|}(0)$.
	\end{proposition}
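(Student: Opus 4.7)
The plan is to prove this by running the integral moving spheres technique from spheres centered at points $x$ close to (but not at) the origin, with radii approaching $|x|$ from below. The pointwise upper bound $u(x) \leqslant C|x|^{-(n-2)/2}$ from Proposition~\ref{prop:upperbound} will be the crucial decay input that makes the procedure succeed in this degenerate limit. First, for each $x \in B_{1/4}^*$ and each $\mu \in (0, |x|)$, I use Lemma~\ref{lm:differencecomparison} to write
\[
u(z) - u_{x,\mu}(z) = \mathcal{I}_2(x,\mu,z) + \mathcal{I}_{2,\alpha}(x,\mu,z) + \bigl[h(z) - h_{x,\mu}(z)\bigr],
\]
where $\mathcal{I}_2$ and $\mathcal{I}_{2,\alpha}$ are the single and nested integrals involving the positive kernels $\mathcal{K}_2$ and $\mathcal{K}_\alpha$ from \eqref{kernelkelvintransform1}--\eqref{kernelkelvintransform2}. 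The \eqref{H-hypothesis}-hypothesis on $h$ gives $(h)_{x,\mu} \leqslant h$ for $\mu$ small (cf.\ \eqref{hxmuandh1}), handling the non-homogeneous term as in Proposition~\ref{prop:upperbound}.

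Next, I imitate Claim~1 from the proof of Proposition~\ref{prop:symmetry}: using $|\nabla\ln u|$-control on the compact set $\overline{B_{|x|/2}(x)}$ and the pointwise upper bound on $u$, I show there exists $\mu_0(x) \in (0, |x|)$ such that $u_{x,\mu}(y) \leqslant u(y)$ for all $\mu \in (0, \mu_0(x))$ and all $y \in B_2^* \setminus B_\mu(x)$. This makes the critical parameter
\[
\bar{\mu}(x) := \sup\bigl\{ 0 < \lambda < |x| : u_{x,\mu}(y) \leqslant u(y) \ \text{for} \ |y-x| \geqslant \mu, \ y \in B_2^*, \ 0 < \mu \leqslant \lambda \bigr\}
\]
well-defined and positive. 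The heart of the proof is to establish the quantitative bound
\[
\bar{\mu}(x) \geqslant |x|\bigl(1 - C|x|\bigr) \quad \text{for some } C > 0 \text{ independent of } x,
\]
for $|x|$ sufficiently small. The argument mirrors Claim~2 of Proposition~\ref{prop:symmetry}: if $\bar\mu(x)$ were strictly smaller than $|x|(1-C|x|)$, one could perturb $\bar\mu(x)$ upward by $\varepsilon$ while preserving the inequality, using positivity of $\mathcal{K}_2, \mathcal{K}_\alpha$, the explicit lower bounds for these kernels in annular shells near the critical sphere (as in \eqref{KalphaestimateA}--\eqref{KalphaestimateB}), and the decay $u(y) \leqslant C|y|^{-(n-2)/2}$ to bound tail integrals. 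The presence of $h$ is absorbed because its \eqref{H-hypothesis}-approximation compensates, contributing an error of size $\mathcal{O}(|x|)$.

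Finally, once $\bar\mu(x) \geqslant |x|(1 - C|x|)$ is established, I fix $r > 0$ small and pick any two points $y_1, y_2 \in \partial B_r(0)$. A standard geometric construction (cf.\ the end of the proof of Proposition~\ref{prop:symmetry}) produces a point $x$ with $|x| \simeq r$ and a radius $\mu \leqslant \bar\mu(x)$ so that the inversion $\mathcal{I}_{x,\mu}$ sends $y_1$ to $y_2$. The inequality $u_{x,\mu}(y_1) \leqslant u(y_1)$ then reads
\[
\left(\frac{\mu}{|y_1-x|}\right)^{n-2} u(y_2) \leqslant u(y_1),
\]
and a Taylor expansion of $(\mu/|y_1-x|)^{n-2}$ around $\mu = |y_1-x| = r$ gives $u(y_2) \leqslant u(y_1)(1+Cr)$. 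Swapping the roles of $y_1$ and $y_2$ yields the matching lower bound, so that $u(y_1) = u(y_2)(1+\mathcal{O}(r))$. Averaging $y_2$ over $\partial B_r(0)$ yields $u(y_1) = \bar u(r)(1 + \mathcal{O}(r))$, which is the claim.

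The main obstacle will be making the perturbation step rigorous in the presence of the nested convolution structure inherited from the Hartree nonlinearity: the term $\mathcal{I}_{2,\alpha}$ involves a double kernel integral, and tracking how the cancellations between $F(u) - F(u_{x,\mu})$ and $f(u) - f(u_{x,\mu})$ propagate through two successive convolutions, while simultaneously exploiting the sharp upper bound $u \leqslant C|\cdot|^{-(n-2)/2}$ and keeping the error in $\mu$ proportional to $|x|^2$, requires precisely the type of delicate kernel estimates carried out in Proposition~\ref{prop:upperbound}, now performed in the asymptotic regime $\mu \nearrow |x|$, $|x| \to 0$.
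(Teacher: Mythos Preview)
Your overall architecture is right --- run moving spheres about centres $x\neq 0$, define the critical radius $\bar\mu(x)$, then use the resulting Kelvin inequality at a well-chosen centre and radius to compare values on a small sphere $\partial B_r$. But the quantitative target you set for $\bar\mu(x)$, and the geometry of your endgame, are both off in a way that prevents the $\mathcal{O}(|x|)$ conclusion.

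First, you aim only for $\bar\mu(x)\geqslant |x|(1-C|x|)$, justifying the loss by saying the $h$-term ``contributes an error of size $\mathcal{O}(|x|)$''. This is too pessimistic: the \eqref{H-hypothesis}-hypothesis gives the \emph{exact} inequality $h_{x,\mu}\leqslant h$ for all $0<\mu\leqslant \bar r$ (a fixed threshold) and all centres near the origin, so once $|x|\leqslant\varepsilon\leqslant\bar r$ the non-homogeneous term produces no deficit at all. Combined with the positivity coming from the far-field integrals (where Proposition~\ref{prop:upperbound} controls $u_{x,\mu}$ on $B_1^c$ by $C|x|^{(n-2)/2}$, beaten by the uniform lower bound $u\geqslant c_0$ on $B_2\setminus B_1$), the continuation argument actually yields $\bar\mu(x)=|x|$ for all $|x|\leqslant\varepsilon$. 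This is what the paper establishes.

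Second, your endgame places the centre at $|x|\simeq r$. With only $\bar\mu(x)\geqslant |x|(1-C|x|)$ this forces $\mu\lesssim r$, and for the inversion to send $y_1\in\partial B_r$ to $y_2\in\partial B_r$ with $|y_1-x|\simeq r$, the factor $(\mu/|y_1-x|)^{n-2}=\bigl(|y_2-x|/|y_1-x|\bigr)^{(n-2)/2}$ is only $1+\mathcal O(1)$, not $1+\mathcal O(r)$; optimising the centre scale gives at best $1+\mathcal O(r^{1/3})$. The paper instead uses the full strength $\bar\mu(x)=|x|$ to place the centre at a \emph{fixed} scale, $x_3=x_1+\tfrac{\varepsilon}{4}\frac{x_1-x_2}{|x_1-x_2|}$ with $|x_3|\sim\varepsilon$, and radius $\mu=\sqrt{|x_3-x_1|\,|x_3-x_2|}$; then $(\mu/|x_3-x_2|)^{n-2}=\bigl(1+4|x_1-x_2|/\varepsilon\bigr)^{-(n-2)/2}=1+\mathcal O(r/\varepsilon)$, which is the desired $1+\mathcal O(r)$. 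So the fix is: prove $\bar\mu(x)=|x|$ (no loss), then compare points on $\partial B_r$ via a sphere centred at distance of order $\varepsilon$, not of order $r$.
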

	
	\begin{proof}
		We will show that there exists a small $\varepsilon>0$ such that for every $x \in \bar{B}_{1/4} \setminus \{0\}$
		\begin{equation}	\label{goal}
			u_{x,\mu}(y)\leqslant u(y) \quad {\rm for } \ y\in B_{1}\setminus \left(B_{\mu}(x)\cup\{0\}\right), \ 0<\mu<|x|\leqslant\varepsilon.
		\end{equation}
		
		First of all, by \cite[Lemma 3.1]{arxiv:1901.01678}, for every $x \in B_{1}\setminus \{0\}$, there exists a positive constant $0<\tilde{r}_{x}\in (0,\frac{1}{2})$ and $\|\nabla\ln h\|_{L^{\infty}(B_{1})}<\infty$ such that for $0<\mu\leqslant \tilde{r}_{x}$, there holds
		\begin{equation*}
			h_{x,\mu}(y)\leqslant h(y) \quad {\rm for \ all} \quad |y-x|\geqslant \mu \quad {\rm and} \quad y\in B_{1}.
		\end{equation*}  
		Moreover, using the same strategy as in the proof of \cite[Lemma 3.1]{arxiv:1901.01678}, we obtain that, for every $x \in \bar{B}_{1/4}^*$, there exists $0<r_{x}<|x|$ such that for all $0<\mu\leqslant r_{x}$, it holds
		\begin{equation*}
			u_{x,\mu}(y)\leqslant u(y) \quad 0<\mu<|y-x|\leqslant r_{x}.
		\end{equation*}
		Also, using \eqref{ourlocalPDEdualR=2}, we have
		\begin{equation}\label{ulowbound}
			u(y)\geqslant 4^{2-n}\int_{B_{2}} (\mathcal{R}_\alpha\ast F(u))f(u) \ud z=:c_{0}>0,
		\end{equation}
		which implies that there exists $0<\mu_{1}\ll r_{x}$ such that, for every $0<\mu\leqslant\mu_{1}$, it holds
		\begin{equation*}
			u_{x,\mu}(z)\leqslant u(z) \quad \mbox{for} \quad y\in B_{1}\setminus \left(B_{\mu}(x)\cup\{0\}\right).
		\end{equation*}
		
		Let us define
		\begin{equation*}
			\bar{\mu}(x):=\sup\left\{0<\lambda\leqslant|x|:u_{x,\mu}(y)\leqslant u(y) \; {\rm for} \; y \in B_{1}\setminus \left(B_{\mu}(x)\cup\{0\}\right) \; {\rm and} \;  0<\mu<\lambda \right\}, 
		\end{equation*}
		which is well-defined and positive for any $x\in  \bar{B}_{1/4}^*$. For the sake of brevity, we denote $\bar{\mu}=\bar{\mu}(x)$ below.
		
		Next, we will show that there exists $\varepsilon>0$ such that $\bar{\mu}=|x|$ for all $x\in  \bar{B}_{1/4}^*$ and $0<|x|\leqslant\varepsilon$. 	
		Indeed, for every $\bar{\mu}\leqslant \mu <|x|\leqslant r_{x}$, by Lemma~\ref{lm:differencecomparison}, it follows that for $y\in B_{1}$, we get
		\begin{align*}
			&u(y)-u_{x,\mu}(y)\\
			&\geqslant\int_{B_{1}\setminus B_{\mu}(x)}\mathcal{K}_{2}(x,\mu;y_2,y)P(y_{2})\left[f(u(y_{2}))-f(u_{x,\mu}(y_{2}))\right] \ud y_{2}\\
			&+\int_{\mathbb{R}^{n}\setminus B_{\mu(x)}}\mathcal{K}_{2}(x,\mu;y_2,y)f(u_{x,\mu}(y_{2}))\int_{ B_1 \setminus B_{\mu(x)}}\mathcal{K}_{\alpha}(x,\mu;y_1,y_{2})\left[F(u(y_{1}))-F(u_{x,\mu}(y_{2}))\right]\ud y_{1} \ud y_{2}\\
			&+\mathcal{J}_{1}(\mu,u,y),
		\end{align*}
		where 
		\begin{align*}
			&\mathcal{J}_{1}(\mu,u,y)\\
			&=\int_{B_{2}\setminus B_{1}}\mathcal{K}_{2}(x,\mu;y_2,y)P(y_{2})\left[f(u(y_{2}))-f(u_{x,\mu}(y_{2}))\right] \ud y_{2}\\
			&-\int_{B_{2}^{c}}\mathcal{K}_{2}(x,\mu;y_2,y)P(y_{2})f(u_{x,\mu}(y_{2}))\ud y_{2}\\
			&+\int_{\mathbb{R}^{n}\setminus B_{\mu(x)}}\mathcal{K}_{2}(x,\mu;y_2,y)f(u_{x,\mu}(y_{2}))\int_{ B_2 \setminus B_{1}}\mathcal{K}_{\alpha}(x,\mu;y_1,y_{2})\left[F(u(y_{1}))-F(u_{x,\mu}(y_{2}))\right]\ud y_{1} \ud y_{2}\\
			&-\int_{\mathbb{R}^{n}\setminus B_{\mu(x)}}\mathcal{K}_{2}(x,\mu;y_2,y)f(u_{x,\mu}(y_{2}))\left(\int_{ B_2^{c} }\mathcal{K}_{\alpha}(x,\mu;y_1,y_{2})F(u_{x,\mu}(y_{2}))\ud y_{1}\right) \ud y_{2}.
		\end{align*}
		For $y\in B_{1}^{c}$ and $\mu<|x|<\varepsilon<\frac{1}{10}$, we have
		\begin{equation*}
			\mathcal{I}_{x,\mu}(y)=\left|x+\frac{\mu^{2}(y-x)}{|y-x|^{2}}\right|\geqslant|x|-\frac{10}{9}\mu^{2}\geqslant|x|-\frac{9}{10}|x|^{2}\geqslant\frac{8}{9}|x|.
		\end{equation*}
		Hence, by means of Proposition~\ref{prop:upperbound}, there exists $C>0$ such that $u(\mathcal{I}_{x,\mu}(y))\leqslant C |x|^{\frac{2-n}{2}}$, which, for all $y\in B_1^{c}$, yields
		\begin{equation}\label{uxmuupper}
			u_{x,\mu}(y)=\left(\frac{\mu}{|y-x|}\right)^{n-2} u(\mathcal{I}_{x,\mu}(y))\leqslant C \mu^{n-2}|x|^{\frac{2-n}{2}}\leqslant C |x|^{\frac{n-2}{2}}\leqslant C \varepsilon^{\frac{n-2}{2}}.
		\end{equation}
		Combined with \eqref{ulowbound}, we obtain \(u_{x,\mu}(y)<u(y)\) for $y \in B_{2}\setminus B_{1}$.
		
		Furthermore, for $y_{2} \in B_{2} \setminus B_{1}$, using that $u \in \mathcal{C}(B_{2}\setminus B_{1/2})$ and $u>0$, we have
		\begin{align*}
			P(y_{2})&=\int_{ B_2 \setminus B_{1/2}}\frac{F(u(y_{1}))}{|y_{1}-y_{2}|^{n-\alpha}}\ud y_{1}+\int_{ B_{1/2}}\frac{F(u(y_{1}))}{|y_{1}-y_{2}|^{n-\alpha}}\ud y_{1}\\
			&\geqslant C\int_{ B_2 \setminus B_{1/2}}\frac{1}{|y_{1}-y_{2}|^{n-\alpha}}\ud y_{1}+2^{\alpha-n}\int_{ B_{1/2}}F(u(y_{1}))\ud y_{1}:=c_{1}>0.
		\end{align*}
		Moreover, for $y_{2} \in B_{4} \setminus B_{2}$, we have 
		\begin{align*}
			P(y_{2})&=\int_{ B_2 \setminus B_{1}}\frac{F(u(y_{1}))}{|y_{1}-y_{2}|^{n-\alpha}}\ud y_{1}+\int_{ B_{1}}\frac{F(u(y_{1}))}{|y_{1}-y_{2}|^{n-\alpha}}\ud y_{1}\\
			&\leqslant\|F(u)\|_{L^{\infty}(B_{2}\setminus B_{1})}\int_{ B_2 \setminus B_{1}}\frac{1}{|y_{1}-y_{2}|^{n-\alpha}}\ud y_{1}+\int_{ B_{1}}F(u(y_{1}))\ud y_{1}\leqslant C.
		\end{align*}
		For $y_{2} \in B_{4}^{c}$, we get
		\begin{align*}
			P(y_{2})= \int_{ B_2 \setminus B_{1}}\frac{F(u(y_{1}))}{|y_{1}-y_{2}|^{n-\alpha}}\ud y_{1}+\int_{ B_{1}}\frac{F(u(y_{1}))}{|y_{1}-y_{2}|^{n-\alpha}}\ud y_{1}
			\leqslant C\|F(u)\|_{L^{\infty}(B_{2}\setminus B_{1})}+3^{\alpha-n}\int_{ B_{1}}F(u(y_{1}))\ud y_{1}\leqslant C.
		\end{align*}
		Thus, for $y \in B_{1} \setminus \left(B_{\mu}(x)\cup \{0\}\right)$, by \eqref{ulowbound},  \eqref{uxmuupper} and the estimates for $P_k$ and $\mathcal{K}_{\alpha}$, we have 
		\small
		\begin{align*}
			&\mathcal{J}_{1}(\mu,u,y)\\
			&\geqslant c_{1}\int_{B_{2}\setminus B_{1}}\mathcal{K}_{2}(x,\mu;y_2,y)\left(c_{0}^{\frac{2+\alpha}{n-2}}-C\varepsilon^{\frac{2+\alpha}{2}}\right) \ud y_{2}\\
			&-C\int_{B_{2}^{c}}\mathcal{K}_{2}(x,\mu;y_2,y)\left(\left(\frac{|x|}{|y_{2}-x|}\right)^{n-2}|x|^{-\frac{n-2}{2}}\right)^{\frac{2+\alpha}{n-2}}\ud y_{2}\\
			&+\int_{\mathbb{R}^{n}\setminus B_{\mu(x)}}\mathcal{K}_{2}(x,\mu;y_2,y)f(u_{x,\mu}(y_{2}))\int_{ B_2 \setminus B_{1}}\mathcal{K}_{\alpha}(x,\mu;y_1,y_{2})\left(c_{0}^{\frac{n+\alpha}{n-2}}-C\varepsilon^{\frac{n+\alpha}{2}}\right)\ud y_{1} \ud y_{2}\\
			&-\int_{\mathbb{R}^{n}\setminus B_{\mu(x)}}\mathcal{K}_{2}(x,\mu;y_2,y)f(u_{x,\mu}(y_{2}))\left(\int_{ B_2^{c} }\mathcal{K}_{\alpha}(x,\mu;y_1,y_{2})\left(\left(\frac{|x|}{|y_{1}-x|}\right)^{n-2}|x|^{-\frac{n-2}{2}}\right)^{\frac{n+\alpha}{n-2}}\ud y_{1}\right) \ud y_{2}\\
			&\geqslant\frac{1}{2}c_{1}c_{0}^{\frac{2+\alpha}{n-2}}\int_{B_{2}\setminus B_{1}}\mathcal{K}_{2}(x,\mu;y_2,y)\ud y_{2}-C\varepsilon^{\frac{2+\alpha}{2}}\int_{B_{2}^{c}}\mathcal{K}_{2}(x,\mu;y_2,y)\frac{1}{|y_{2}-x|^{2+\alpha}}\ud y_{2}\\
			&+\int_{\mathbb{R}^{n}\setminus B_{\mu(x)}}\mathcal{K}_{2}(x,\mu;y_2,y)f(u_{x,\mu}(y_{2}))\left(\frac{1}{2}c_{0}^{\frac{n+\alpha}{n-2}}\int_{ B_2 \setminus B_{1}}\mathcal{K}_{\alpha}(x,\mu;y_1,y_{2})\ud y_{1}\right) \ud y_{2}\\
			&-\int_{\mathbb{R}^{n}\setminus B_{\mu(x)}}\mathcal{K}_{2}(x,\mu;y_2,y)f(u_{x,\mu}(y_{2}))\left(\varepsilon^{\frac{n+\alpha}{2}}\int_{ B_2^{c} }\mathcal{K}_{\alpha}(x,\mu;y_1,y_{2})\frac{1}{|y_{1}-x|^{n+\alpha}}\ud y_{1}\right) \ud y_{2}\\
			&\geqslant C_{2}(|y-x|-\mu)
		\end{align*}
		\normalsize
		if we let $0<\varepsilon\ll 1$ be sufficiently small, where $C_{2}>0$ is constant independent of $x$.  Eventually, if $\bar{\mu}<|x|$, by using the proof of Proposition~\ref{prop:upperbound} and \eqref{uxmuupper}, we get a contradiction with the definition of $\bar{\mu}$. Therefore, \eqref{goal} is proved.
		
		Now, we choose $0<r<\varepsilon^{2}$ and $x_{1}, x_{2}\in\partial B_{r}$ satisfying $u(x_{1})=\max_{\partial B_{r}} u$ and $u(x_{2})=\min_{\partial B_{r}} u$.
		Now, by setting
		\begin{equation*}
			x_{3}=x_{1}+\frac{\varepsilon\left(x_{1}-x_{2}\right)}{4\left|x_{1}-x_{2}\right|} \quad \mbox{and} \quad \mu=\sqrt{\frac{\varepsilon}{4}\left(\left|x_{1}-x_{2}\right|+\frac{\varepsilon}{4}\right)},
		\end{equation*}
		it follows from \eqref{goal} that
		$(u)_{x_{3},\mu}\left(x_{2}\right)\leqslant u\left(x_{2}\right)$.
		Furthermore, we have
		\begin{align*}
			u_{x_{3},\mu}\left(x_{2}\right)=\left(\frac{\mu}{\left|x_{1}-x_{2}\right|+4{\varepsilon}^{-1}}\right)^{n-2} u\left(x_{1}\right)=\left(\frac{1}{4\left|x_{1}-x_{2}\right|\varepsilon^{-1}+1}\right)^{\frac{n-2}{2}} u\left(x_{1}\right)\geqslant\left(\frac{1}{8r\varepsilon^{-1}+1}\right)^{\frac{n-2}{2}} u\left(x_{1}\right),
		\end{align*}
		which implies 
		\begin{equation*}
			\max_{\partial B_{r}} u\leqslant\left(\frac{8r}{\varepsilon}+1\right)^{\frac{n-2}{2}}\min_{\partial B_{r}}u,
		\end{equation*}
		and this proves the proposition.
	\end{proof}
	
	\begin{acknowledgement}
		This paper was initiated during the first and third authors' visits to Zhejiang Normal University in 2023. 
		The authors would like to express their sincere appreciation for the warm hospitality extended to them during their stay.
	\end{acknowledgement}
	

\end{document}